\documentclass[a4paper,11pt]{amsart}
\usepackage{mathrsfs}
\usepackage[all]{xy}
\usepackage{amsmath,amssymb,amscd,bbm,amsthm,mathrsfs}
\usepackage{graphicx}
\numberwithin{equation}{section}
\newtheorem{thm}{Theorem}[section]
\newtheorem{lem}{Lemma}[section]
\newtheorem{cor}{Corollary}[section]
\newtheorem{prop}{Proposition}[section]
\newtheorem{rem}{Remark}[section]

\makeatletter
\renewcommand{\@makefntext}[1]{\parindent 1em\noindent \hb@xt@1.8em{\hss\@makefnmark}#1}
\makeatother
\begin{document}

\title[Quaternionic M\"{o}bius invariant Laplacian and harmonic functions]{Quaternionic M\"{o}bius invariant Laplacian and quaternionic M\"{o}bius harmonic functions on the unit ball}
\author{Ruiwen Wang}
\thanks{School of Mathematical Sciences, Xiamen University, Fujian 361005, China, Email:rwwang@stu.xmu.edu.cn}
\begin{abstract}
We construct quaternionic M\"{o}bius ($\mathcal{QM}$ briefly) transformations on the quaternionic unit ball, which are used to define $\mathcal{QM}$-invariant Laplacian operator $\triangle$. A function annihilated by $\triangle$ is called $\mathcal{QM}$-harmonic. We prove that $\mathcal{QM}$-harmonic functions can be expanded in terms of quaternionic spherical harmonics multiplied by hypergeometric functions as radial parts. By establishing a Green formula associated to $\triangle$ and constructing the $\mathcal{QM}$-Poisson kernel, we solve the Dirichlet problem for $\mathcal{QM}$-invariant Laplace equation, which is degenerate elliptic. We also give a Fatou type theorem about non-tangential convergence of $\mathcal{QM}$-Poisson integrals. Compared to the real and complex cases, the main  difficulties come from the noncommutativity of the quaternionic algebra and the complexity of the quaternionic unitary group ${\rm Sp}(n){\rm Sp}(1)$ and its modules. However, they can be overcome by using the embedding of the quaternionic space to the complex matrix space and using more complicated algebraic tools.
\\ \par\

\end{abstract}
\keywords{Quaternionic M\"{o}bius transformations, Quaternionic M\"{o}bius invariant Laplacian operator, Quaternionic M\"{o}bius harmonic, Quaternionic M\"{o}bius Poisson kernel, the Dirichlet problem, nontangential convergence.}

\maketitle
\section{Introduction}

On the unit ball of complex Euclidean space $\mathbb{C}^n$, one can use holomorphic M\"{o}bius automorphisms to define M\"{o}bius invariant Laplacian operator, which coincides with the Laplace-Beltrami operator on the complex hyperbolic manifold. The functions annihilated by this operator are called {\it M\"{o}bius harmonic}. For its real version, M\"{o}bius transformations on the unit ball of the Euclidean space are well-known, and there also exists an abundant theory of M\"{o}bius harmonic functions. The study of M\"{o}bius invariant subspaces of harmonic functions and operators on them is active in last 3 decades (cf.$\;$ e.g. \cite{Ahern,Liu1,Liu2,Liu3,Ou2,Rudin,Stoll,wulan,Xia} etc.). The purpose of this paper is to generalize fundamental facts of this theory to the quaternionic unit ball $B^{4n}:=\{\mathbf{q}\in \mathbb{H}^n;|\mathbf{q}|<1\}$.

 For $\mathbf{a}\in B^{4n}$, we denote by $\varphi_{\mathbf{a}}$ the following automorphism of $B^{4n}$:
\begin{equation}\begin{aligned}\label{def varphib}
\varphi_{\mathbf{a}}(\mathbf{q}):=(\mathbf{a}-\mathbf{A}\mathbf{q})(1-\mathbf{a}^*\mathbf{q})^{-1},
\end{aligned}\end{equation}
where $\mathbf{a},\mathbf{q}\in B^{4n}$ are $(n\times 1)$-quaternionic column vectors, and
\begin{equation}\begin{aligned}\label{hermitian}
\mathbf{A}=(1-s_{\mathbf{a}})|\mathbf{a}|^{-2}\mathbf{a}\mathbf{a}^*+s_{\mathbf{a}}I_n,\qquad s_{\mathbf{a}}=(1-|\mathbf{a}|^2)^{\frac12},
\end{aligned}\end{equation}
is a quaternionic $(n\times n)$-matrix, $\mathbf{a}^*=\bar{\mathbf{a}}^t$. Then a \emph{quaternionic M\"{o}bius transformation} ({\it $\mathcal{QM}$-transformation } briefly) is defined as the composition of a quaternionic unitary transformation in ${\rm Sp}(n){\rm Sp}(1)$ and $\varphi_{\mathbf{a}}$ for some $\mathbf{a}\in B^{4n}$. They constitute a group (cf. Theorem \ref{thm mobius}).

We can use the automorphism $\varphi_{\mathbf{a}}$ to define an operator on the unit ball:
\begin{equation}\begin{aligned}\label{def inv op}
(\triangle f)(\mathbf{a}):=\triangle_0(f\circ \varphi_{\mathbf{a}})(\mathbf{0}),
\end{aligned}\end{equation}
for $f\in C^2(B^{4n})$ and $\mathbf{a}\in B^{4n}$, where $\triangle_0$ is the standard Laplacian on $\mathbb{R}^{4n}$. $\triangle$ is proved to be invariant under $\mathcal{QM}$-transformations and called the \emph{quaternionic M\"{o}bius invariant Laplacian} ({\it $\mathcal{QM}$-invariant Laplacian} briefly). A function $f$ on $B^{4n}$ is called \emph{quaternionic M\"{o}bius harmonic} ({\it $\mathcal{QM}$-harmonic} briefly) if $\triangle f=0$.

One difficulty for generalizing the function theory on the real or complex unit ball to the quaternionic case is the noncommuntatity of the quaternionic algebra, which makes the construction of $\mathcal{QM}$-transformations and the proof of the explicit expression of $\mathcal{QM}$-invariant Laplacian more complicated. However, one can overcome this difficulty by embedding $\mathbb{H}^n$ to the complex matrix space and using complex vector fields introduced in \cite{wang5}, which are motivated by the embedding of the quaternionic algebra $\mathbb{H}$ into $\mathfrak{gl}(2,\mathbb{C})$:
\begin{equation}\begin{aligned}
q=x_0+x_1\mathbf{i}+x_2\mathbf{j}+x_3\mathbf{k}\mapsto \bigg(\begin{matrix}x_0+x_1\mathbf{i}&-x_2-x_3\mathbf{i}\\x_2-x_3\mathbf{i}&\;\;x_0-x_1\mathbf{i}\end{matrix}\bigg).
\end{aligned}\end{equation}
Namely, we use the embedding $\tau:\mathbb{H}^n\rightarrow\mathbb{C}^{2n\times 2}$ given by
\begin{equation}\begin{aligned}\label{zaa}
\tau(\mathbf{q})=(z_A^{A'})=\left(\begin{matrix}z_1^{0'}&z_1^{1'}\\z_2^{0'}&z_2^{1'}\\\vdots&\vdots\\z_{n+1}^{0'}&z_{n+1}^{1'}\\z_{n+2}^{0'}&z_{n+2}^{1'}\\\vdots&\vdots\end{matrix}\right):=\left(\begin{matrix}x_0+x_1\mathbf{i}&-x_2-x_3\mathbf{i}\\x_4+x_5\mathbf{i}&-x_6-x_7\mathbf{i}\\\vdots&\vdots\\x_2-x_3\mathbf{i}&\;\;x_0-x_1\mathbf{i}\\x_6-x_7\mathbf{i}&\;\;x_4-x_5\mathbf{i}\\\vdots&\vdots\end{matrix}\right),
\end{aligned}\end{equation}
if we write $q_{l-1}=x_{4l}+\mathbf{i}x_{4l+1}+\mathbf{j}x_{4l+2}+\mathbf{k}x_{4l+3}$, $l=1,2,\dots,n$, and complex vector fields
\begin{equation}\begin{aligned}\label{nabla}
\left(\nabla_{A}^{A'}\right)&=\frac{1}{2}\left(\begin{matrix}\nabla_1^{0'}& \nabla_{2}^{0'}&\\\vdots&\vdots\\ \nabla_{n+1}^{1'}&\nabla_{n+2}^{1'}\\\vdots&\vdots&\end{matrix}\right)
&:=\frac{1}{2}\left(\begin{matrix}\partial_{x_0}-\mathbf{i}\partial_{x_1}& -\partial_{x_2}+\mathbf{i}\partial_{x_3}\\\vdots&\vdots\\ \partial_{x_2}+\mathbf{i}\partial_{x_3}&\;\;\partial_{x_0}+\mathbf{i}\partial_{x_1}\\\vdots&\vdots&\end{matrix}\right),
\end{aligned}\end{equation}
where $A=1,2,\dots,2n$, $A'=0',1'$.
The quaternionic structure is encoded in these vector fields. We can write
\begin{equation}\begin{aligned}\label{def qj}
\mathbf{q}_j&=z_j^{0'}+\mathbf{j}z_{n+j}^{0'}=z_j^{0'}-z_{j}^{1'}\mathbf{j}.
\end{aligned}\end{equation}
The complex vector fields in \eqref{nabla} are so chosen to promise $\nabla_A^{A'}z_B^{B'}=\delta_{AB}\delta^{A'B'}$(cf. Lemma \ref{lem nabla}). In this notation, the\emph{ $\mathcal{QM}$-invariant Laplacian operator} $\triangle$ on $B^{4n}$ can be written explicitly as follows.
\begin{prop}\label{invariant laplace}
\begin{equation}\begin{aligned}\label{ex form}
\triangle=& 4(1-|\mathbf{q}|^2)\left( \frac14\triangle_0 + \mathfrak{D} -R^{0'}R^{1'}+ R^{0'}+R^{1'} \right),\\
\end{aligned}\end{equation}
where
\begin{equation}\begin{aligned}\label{def D R}
\mathfrak{D}=\sum_{A,B=1}^{2n}z_A^{1'} z_B^{0'}\nabla_A^{0'} \nabla_B^{1'},\qquad R^{A'}=\sum_{A=1}^{2n}z_A^{A'}\nabla_A^{A'},\qquad A'=0',1'.
\end{aligned}\end{equation}
\end{prop}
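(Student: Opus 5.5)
The plan is to compute $\triangle_0(f\circ\varphi_{\mathbf{a}})(\mathbf{0})$ by the chain rule, working in the complex coordinates $z_A^{A'}$ of $\tau$. The first step expresses the real Laplacian in these coordinates. From \eqref{nabla} and $\nabla_A^{A'}z_B^{B'}=\delta_{AB}\delta^{A'B'}$, each quaternionic variable $q_l$ has $\partial_{x_0}^2+\dots+\partial_{x_3}^2$ equal to a constant multiple of $\nabla_l^{0'}\nabla_{n+l}^{1'}-\nabla_{n+l}^{0'}\nabla_{l}^{1'}$. Summing gives
\[
\triangle_0=c\sum_{l=1}^n\big(\nabla_l^{0'}\nabla_{n+l}^{1'}-\nabla_{n+l}^{0'}\nabla_l^{1'}\big),
\]
with $c=4$ under the normalization $\frac12$ in \eqref{nabla}. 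I will fix this constant by direct computation.

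Next comes the second-order Taylor expansion of $\varphi_{\mathbf{a}}$ at $\mathbf{0}$. Using $(1-\mathbf{a}^*\mathbf{q})^{-1}=1+\mathbf{a}^*\mathbf{q}+\mathbf{a}^*\mathbf{q}\,\mathbf{a}^*\mathbf{q}+O(|\mathbf{q}|^3)$, we get
\[
\varphi_{\mathbf{a}}(\mathbf{q})=\mathbf{a}-\mathbf{A}\mathbf{q}+\mathbf{a}\,\mathbf{a}^*\mathbf{q}-\mathbf{A}\mathbf{q}\,\mathbf{a}^*\mathbf{q}+\mathbf{a}(\mathbf{a}^*\mathbf{q})^2+O(|\mathbf{q}|^3).
\]
The chain rule then gives $\triangle_0(f\circ\varphi_{\mathbf{a}})(\mathbf{0})=\sum \partial^2 f(\mathbf{a})\,\langle\nabla\varphi^i,\nabla\varphi^j\rangle+\sum\partial f(\mathbf{a})\,\triangle_0\varphi^i(\mathbf{0})$. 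For the first-order part I will show that the Gram matrix of the linear map $L\mathbf{q}=-\mathbf{A}\mathbf{q}+\mathbf{a}\mathbf{a}^*\mathbf{q}$ has a simple form. On the $\mathbf{a}$-direction, $L$ multiplies by $-(1-|\mathbf{a}|^2)$; on the orthogonal complement, by $-s_{\mathbf{a}}$. Hence $LL^*=(1-|\mathbf{a}|^2)(I-\mathbf{a}\mathbf{a}^*)$. Quaternionic left-linearity (matrices acting on the left, $\mathbb{H}$-scalars on the right) means the real Laplacian composed with $L$ becomes $(1-|\mathbf{a}|^2)$ times $\triangle_0$ minus a rank-one correction along $\mathbf{a}$. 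The correction is taken over all four real directions $\mathbf{a}\lambda$, $\lambda\in\{1,\mathbf{i},\mathbf{j},\mathbf{k}\}$. Rewritten with $\nabla$'s, that correction should be exactly $4(1-|\mathbf{q}|^2)(\mathfrak{D}-R^{0'}R^{1'}+\text{first-order terms})$ evaluated at $\mathbf{q}=\mathbf{a}$. I will verify this by writing $\sum_\lambda (\mathbf{a}\lambda\cdot\nabla)^2$ in complex coordinates, using $\mathbf{a}_j=z_j^{0'}-z_j^{1'}\mathbf{j}$ from \eqref{def qj}. The quadratic form $\sum_\lambda$ pairs $z^{0'}$ with $\nabla^{0'}$ and $z^{1'}$ with $\nabla^{1'}$, and the cross terms produce $\mathfrak{D}$. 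The commutators $[\nabla_A^{A'},z_B^{B'}]$ generated when reordering into $R^{0'}R^{1'}$ contribute first-order pieces.

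The third step computes the second-order contribution $\triangle_0\varphi_{\mathbf{a}}(\mathbf{0})$ from the quadratic terms $-\mathbf{A}\mathbf{q}\,\mathbf{a}^*\mathbf{q}+\mathbf{a}(\mathbf{a}^*\mathbf{q})^2$. Applying $\triangle_0$ to quaternionic quadratic expressions $\mathbf{q}_i \bar a_i\mathbf{q}_j$ requires the identity $\sum_{\lambda}\lambda c\lambda=-2\bar c$ (sum over $1,\mathbf{i},\mathbf{j},\mathbf{k}$). This yields vectors proportional to $(1-|\mathbf{a}|^2)\mathbf{a}$ and quaternionic conjugate-type terms. Paired with $\partial f(\mathbf{a})$ and translated to complex coordinates, these should give $4(1-|\mathbf{a}|^2)(R^{0'}+R^{1'})f$ together with the leftover first-order terms from reordering in the second step.

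The main obstacle is the bookkeeping in the second and third steps: noncommutativity makes $\mathbf{q}\,\mathbf{a}^*\mathbf{q}$ nontrivial, and the first-order terms must be matched exactly. A useful check is that the total must be ${\rm Sp}(n){\rm Sp}(1)$-invariant. It is therefore enough to verify the formula at $\mathbf{a}=(r,0,\dots,0)$ with $r$ real, where $\mathbf{A}$ is diagonal and the expansion is explicit. Since both sides of \eqref{ex form} are invariant under ${\rm Sp}(n){\rm Sp}(1)$, this reduces the general case to a one-variable check. I will need to justify this invariance of $\mathfrak{D}$, $R^{0'}R^{1'}$ and $R^{0'}+R^{1'}$, presumably via the lemma on $\nabla$ and the transformation of $z_A^{A'}$ under ${\rm Sp}(n)$ and ${\rm Sp}(1)\cong{\rm SU}(2)$ acting on the index $A'$.
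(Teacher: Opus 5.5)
Your route is correct and has the same skeleton as the paper's. You apply the chain rule at $\mathbf{0}$ to the second-order Taylor expansion of $\varphi_{\mathbf{a}}$; your Gram-matrix term is the paper's $\widehat{\Sigma}_2$ and your $\triangle_0\varphi_{\mathbf{a}}(\mathbf{0})$ term is its $\widehat{\Sigma}_1$. The difference is in how the algebra is done. The paper works throughout in the coordinates $z_A^{A'}$: Lemma \ref{1st differ} and the contractions \eqref{z jab} give the coefficient matrix $(1-|\mathbf{a}|^2)(J_D^C-a_{[D}^{0'}a_{C]}^{1'})$ for arbitrary $\mathbf{a}$, which reads off directly as $\frac14\triangle_0+\mathfrak{D}-R^{0'}R^{1'}$ and needs no invariance argument. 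You compute quaternionically instead. From the eigenspaces of $\mathbf{A}$ you get $LL^*=(1-|\mathbf{a}|^2)(I-\mathbf{a}\mathbf{a}^*)$. From $\sum_\lambda\lambda c\lambda=-2\bar c$ you get $\triangle_0\varphi_{\mathbf{a}}(\mathbf{0})=2\sum_{l,\lambda}(L\mathbf{e}_l)\lambda\bar a_l\lambda=-4L\mathbf{a}=4(1-|\mathbf{a}|^2)\mathbf{a}$. You translate into $\nabla$'s only at the end. This is more transparent, and after reducing to $\mathbf{a}=r\mathbf{e}_1$ the translation is a one-line check: $4(R^{0'}R^{1'}-\mathfrak{D})$ with coefficients frozen at $\tau(r\mathbf{e}_1)$ equals $4r^2(\nabla_1^{0'}\nabla_{n+1}^{1'}-\nabla_{n+1}^{0'}\nabla_1^{1'})=r^2(\partial_{x_0}^2+\cdots+\partial_{x_3}^2)=\sum_\lambda(\mathbf{a}\lambda\cdot\nabla)^2$.

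Two of your expectations need correcting, though neither breaks the argument.

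First, reordering produces no first-order terms. Since $\nabla_A^{0'}z_B^{1'}=0$, one has $R^{0'}R^{1'}=\sum_{A,B}z_A^{0'}z_B^{1'}\nabla_A^{0'}\nabla_B^{1'}$ exactly. So the Gram term is precisely $4(1-|\mathbf{a}|^2)(\frac14\triangle_0+\mathfrak{D}-R^{0'}R^{1'})f(\mathbf{a})$. The third step alone gives exactly $4(1-|\mathbf{a}|^2)(R^{0'}+R^{1'})f(\mathbf{a})$, with no conjugate-type leftovers.

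Second, $\mathfrak{D}$ and $R^{0'}R^{1'}$ are not separately ${\rm Sp}(1)$-invariant, so the justification you sketch via ${\rm SU}(2)$ acting on $A'$ fails term by term. Each differs from an ${\rm Sp}(n){\rm Sp}(1)$-invariant operator by $-\frac14(R^{0'}-R^{1'})^2=-\frac14\pi(H)^2$, which is not ${\rm Ad}\,{\rm Sp}(1)$-invariant. Only the difference $\mathfrak{D}-R^{0'}R^{1'}=\frac14\triangle^R-\frac14(R^{0'}+R^{1'})^2-\frac12(R^{0'}+R^{1'})$ is invariant. You do not need ${\rm Sp}(1)$ for the reduction, however. ${\rm Sp}(n)$ alone is transitive on each sphere $|\mathbf{q}|=r$. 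It commutes with every operator $\sum_A z_A^{A'}\nabla_A^{B'}$, because the unprimed index is contracted. Hence it commutes with $R^{0'}$, $R^{1'}$ and $\mathfrak{D}=\pi(U^-)\pi(U)-R^{1'}$.
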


By the formula \eqref{ex form}, we see that $\triangle$ is degenerate elliptic operator. Another difficulty is that the quaternoinic unitary group ${\rm Sp}(n){\rm Sp}(1)$ has modules more complicated than that of $SO(n)$ and $U(n)$. In the complex case, we have the decomposition
\begin{equation*}\begin{aligned}
L^2(S^{2n-1})=\mathop{\bigoplus}_{\substack{a,b\in \mathbb{N}_0}}H_{a,b},
\end{aligned}\end{equation*}
 where $H_{a,b}$ consists of harmonic homogenous polynomials of total degree $a$ in $z_1,\dots,z_n$ and total degree $b$ in $\overline{z}_1,\dots,\overline{z}_n$. This is a crucial fact in the study of M\"{o}bius invariant Laplacian and M\"{o}bius harmonic functions on the complex unit ball \cite{Rudin}. In the quaternionic case, we have more complicated decomposition.
\\[8pt]
\vspace{\parskip}
\textbf{Theorem\; A} \label{highest weight vector}\cite[Proposition 2.1]{Co}
{\it There is a Hilbert space orthogonal direct sum decomposition
\begin{equation}\begin{aligned}\label{L2 decomp}
L^2(S^{4n-1})=\mathop{\bigoplus}_{\substack{a\geq 2b\\a,b\in \mathbb{N}_0}}\mathcal{H}_{a,b},
\end{aligned}\end{equation}
where $\mathcal{H}_{a,b}$ is an irreducible {\rm Sp}$(n)${\rm Sp}$(1)$-module consisting of harmonic polynomials (also called quaternionic spherical harmonics) on $\mathbb{R}^{4n}$, with the highest weight vector
\begin{equation}\begin{aligned}\label{pab}
p_{a,b}=\left(z_1^{0'}\right)^{a-b}\left(z_1^{0'}z_2^{1'}-z_1^{1'}z_2^{0'}\right)^{b}.
\end{aligned}\end{equation}
}
\\[8pt]
\vspace{\parskip}
$\qquad$It is proved in \cite{Co,Zhang} except for checking \eqref{pab} to be highest weight vector. We check it in Subsection 2.2.

\begin{thm}\label{no boundary decomp}
Suppose that $n\geq2$ and $u\in C^2(B^{4n})$ satisfying $\triangle u=0$. Then, there exists $h_{a,b}\in \mathcal{H}_{a,b}$ such that
\begin{equation}\begin{aligned}\label{no boundary decomp eq}
u(\mathbf{q})=\sum_{\substack{a\geq 2b\\a,b\in \mathbb{N}_0}}{}_2F_1\bigl(a,\; b-1,\; 2n+a+b;\; |\mathbf{q}|^2\bigr)h_{a,b}(\mathbf{q}),
\end{aligned}\end{equation}
for $\mathbf{q}\in B^{4n}$, and the series converges uniformly and absolutely on any compact subset of $B^{4n}$. Here, ${}_2F_1$ is the standard hypergeometric function.
\end{thm}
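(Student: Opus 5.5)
We follow the strategy of Rudin's treatment of the complex ball: project $u$ onto each isotypic component on spheres and solve an ODE for the radial part.

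\textbf{Step 1: reduction to one component per sphere.} For $0<r<1$ and $a\ge 2b$, let $\pi_{a,b}$ be the orthogonal projection of $L^2(S^{4n-1})$ onto $\mathcal{H}_{a,b}$ given by Theorem A. Define
\[
u_{a,b}(r\zeta):=\bigl(\pi_{a,b}u(r\,\cdot)\bigr)(\zeta),\qquad \zeta\in S^{4n-1}.
\]
The projection $\pi_{a,b}$ is integration against a zonal kernel over ${\rm Sp}(n){\rm Sp}(1)$, and $\triangle$ commutes with the action of ${\rm Sp}(n){\rm Sp}(1)$ (Proposition \ref{invariant laplace}, or directly from the definition). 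Since $\triangle$ is $\mathcal{QM}$-invariant, differentiating under the integral gives $\triangle u_{a,b}=0$. The space $\mathcal{H}_{a,b}$ is finite dimensional, so we can write $u_{a,b}(\mathbf q)=\sum_k g_k(|\mathbf q|)\,Y_k(\mathbf q)$ over a basis $Y_k$ of $\mathcal{H}_{a,b}$. Because $\triangle$ commutes with the group action and $\mathcal{H}_{a,b}$ is irreducible, each $g_k$ satisfies the same radial ODE.

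\textbf{Step 2: the radial ODE.} Apply formula \eqref{ex form} to $f(\mathbf q)=g(|\mathbf q|^2)h(\mathbf q)$, with $h\in\mathcal{H}_{a,b}$ homogeneous. It suffices to compute on the highest weight vector $h=p_{a,b}$ in \eqref{pab}. Using $\nabla_A^{A'}z_B^{B'}=\delta_{AB}\delta^{A'B'}$ and $\triangle_0 p_{a,b}=0$, we evaluate each term:
\begin{itemize}
\item $R^{0'}p_{a,b}=(a-b)p_{a,b}+b\,p_{a,b}=a\,p_{a,b}$, since $R^{0'}$ counts degree in the $z^{0'}$-variables;
\item $R^{1'}p_{a,b}=b\,p_{a,b}$;
\item $\mathfrak D p_{a,b}=c_{a,b}\,p_{a,b}$ for an explicit constant $c_{a,b}$, which we compute by hand;
\item for $|\mathbf q|^2=\sum_{A}z_A^{0'}\overline{z_A^{0'}}$ (up to the normalization in \eqref{zaa}), the derivatives $\nabla_A^{A'}|\mathbf q|^2$ are explicit conjugate variables.
\end{itemize}
The cross terms then reduce to multiples of $t\,g'(t)\,p_{a,b}$. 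Setting $t=|\mathbf q|^2$, the result should be a hypergeometric equation
\[
t(1-t)g''+\bigl[c-(\alpha+\beta+1)t\bigr]g'-\alpha\beta g=0,
\]
with $c=2n+a+b$, $\alpha=a$, $\beta=b-1$.

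\textbf{Step 3: choosing the solution and summing.} The solution regular at $t=0$ is ${}_2F_1(a,b-1;2n+a+b;t)$. The second solution behaves like $t^{1-c}$ with $1-c<0$ for $n\ge 2$, so it is singular at $0$. Because $u_{a,b}$ is $C^2$ near the origin, we get $u_{a,b}=F_{a,b}(|\mathbf q|^2)\,h_{a,b}$ with $h_{a,b}\in\mathcal{H}_{a,b}$. On each sphere of radius $r$, the $L^2$ expansion of $u(r\cdot)$ gives $u=\sum_{a,b}u_{a,b}$.

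\textbf{Step 4: convergence.} For uniform and absolute convergence on compact sets, we combine three estimates:
\begin{itemize}
\item the $L^2$ bound $\|u_{a,b}(r\cdot)\|_{L^2}\le \|u(r\cdot)\|_{L^2}$, with $r$ slightly larger than the radius of the compact set;
\item the bounds $F_{a,b}(r^2)\ge c>0$ for $r<1$, together with $F_{a,b}(r^2)$ uniformly bounded above on $[0,\rho^2]$;
\item the estimate $\sup_S|h|\le C(\dim\mathcal{H}_{a,b})^{1/2}\|h\|_{L^2}$ with polynomial dimension growth.
\end{itemize}
The coefficients $\alpha\beta$ may be negative, so the lower bound on $F_{a,b}$ is not automatic. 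For it we use the Euler integral representation or termwise positivity for $b\ge1$, and handle $b=0$ separately. Geometric decay $(\rho/r)^{a+b}$ then gives convergence.

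The main obstacle is Step 2: the exact computation of $\mathfrak D p_{a,b}$ and of the cross terms between derivatives of $g(|\mathbf q|^2)$ and of $p_{a,b}$ in the noncommutative setting. This is where the parameters $(a,\,b-1,\,2n+a+b)$ must come out correctly.
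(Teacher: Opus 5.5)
Your Steps 1--3 are sound, but Step 4 has a genuine gap. The claim that $F_{a,b}={}_2F_1(a,b-1,2n+a+b;\cdot)$ is bounded above on $[0,\rho^2]$ uniformly in $(a,b)$ is false. For $b\ge 2$ all Taylor coefficients of $F_{a,b}$ are positive, and they grow. Along $a=2b$, the ratio of consecutive terms of the series for $F_{2b,b}(t)$ is $\frac{(2b+k)(b-1+k)}{(2n+3b+k)(k+1)}\,t$. This ratio is at least $2$ for all $k\le \epsilon b$, where $\epsilon=\epsilon(t)>0$. Hence $F_{2b,b}(t)$ grows exponentially in $b$ for every fixed $t\in(0,1)$. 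With only $F_{a,b}(r^2)\ge c$ in the denominator, the factor $(\rho/r)^{a+b}$ cannot absorb this growth when $r$ is only slightly larger than $\rho$, as you propose. So convergence is not established.

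What you actually need is a bound on the ratio $\sup_{t\le\rho^2}|F_{a,b}(t)|/F_{a,b}(r^2)$ that is uniform in $(a,b)$, and this is easy:
\begin{itemize}
\item For $b\ge1$ the coefficients of $F_{a,b}$ are nonnegative, so $F_{a,b}$ is increasing and the ratio is $\le 1$.
\item For $b=0$ we have $F_{a,0}(t)=1-\frac{a}{2n+a}t\in[1-t,1]$, so the ratio is $\le (1-r^2)^{-1}$.
\end{itemize}
Your $L^2$ bound and $\sup_{S^{4n-1}}|h|\le(\dim\mathcal H_{a,b})^{1/2}\|h\|_{L^2}$ then give, for $|\mathbf q|\le\rho$,
\[
|F_{a,b}(|\mathbf q|^2)h_{a,b}(\mathbf q)|\le (1-r^2)^{-1}(\rho/r)^{a+b}(\dim\mathcal H_{a,b})^{1/2}\|u(r\cdot)\|_{L^2(S^{4n-1})}.
\]
Geometric decay beats the polynomial growth of $\dim\mathcal H_{a,b}$ and of the number of pairs with $a+b=k$. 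This monotonicity is also what the paper's argument rests on. The paper additionally extracts polynomial decay from $C^{2k}$ norms via the spherical Laplacian, after proving $u\in C^\infty$; your geometric-decay route makes that unnecessary.

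Step 2, which you flag as the main obstacle, does close by your direct route. The paper instead rewrites $\triangle$ through $\triangle^L$, $\triangle^R$ and the Euler operator (Corollary~\ref{cor ra tan}), and uses their eigenvalues on $\mathcal H_{a,b}$ together with self-adjointness on the sphere. Write $t=|\mathbf q|^2$ and $p=p_{a,b}$. By \eqref{piH2}, $\mathfrak D=\pi(U^-)\pi(U)-R^{1'}$; moreover $\pi(U)p=0$ and $\pi(U^{\pm})$ annihilate radial functions. Hence $\mathfrak D(gp)=-(tg'+bg)p$. The other terms are:
\begin{itemize}
\item $\frac14\triangle_0(gp)=(tg''+(2n+a+b)g')p$. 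Note that this cross term is a multiple of $g'$, not of $tg'$, and it is what produces $\gamma=2n+a+b$.
\item $R^{0'}R^{1'}(gp)=(t^2g''+(a+b+1)tg'+abg)p$.
\item $(R^{0'}+R^{1'})(gp)=(2tg'+(a+b)g)p$.
\end{itemize}
Summing as in \eqref{ex form} gives $\triangle(gp)=4(1-t)\bigl[t(1-t)g''+(2n+a+b-(a+b)t)g'-a(b-1)g\bigr]p$, which is exactly the hypergeometric equation with $(\alpha,\beta,\gamma)=(a,b-1,2n+a+b)$. Equivariance and irreducibility then transfer it from $p_{a,b}$ to all of $\mathcal H_{a,b}$, as you indicate.
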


For $n=1$, $\mathcal{QM}$-transformations are the standard M\"{o}bius transformations on $\mathbb{R}^{4}$ and $\mathcal{QM}$-invariant Laplacian is the M\"{o}bius invariant Laplacian on the unit ball in $\mathbb{R}^{4}$ \cite{Stoll}. In this case, $L^2(S^{3})$ has the simple decomposition $L^2(S^{3})=\mathop{\bigoplus}\limits_{m\in \mathbb{N}_0}\mathcal{H}_{m}$, where $\mathcal{H}_{m}$ is the space of harmonic polynomials of degree $m$ \cite[Theorem 6.1.1]{Stoll}.

\begin{thm}\label{posson}
For $\varphi\in C(S^{4n-1})$, the Dirichlet problem
\begin{equation}\begin{aligned}\label{Dirichlet p}
\left\{
\begin{aligned}
 \triangle u&=0 {\rm\quad on}\quad B^{4n},\\
u&=\varphi {\rm \quad on}\quad S^{4n-1},
\end{aligned}
\right.
\end{aligned}\end{equation}
has a unique solution, which is given by the following $\mathcal{QM}$-Poisson integral
\begin{equation}\begin{aligned}\label{def poisson int}
P[\varphi](\mathbf{a}):=\int_{S^{4n-1}} P(\mathbf{a},\zeta)\varphi(\zeta)d\sigma(\zeta),
\end{aligned}\end{equation}
 where $\sigma$ is the ${\rm Sp}(n)$Sp$(1)$-invariant measure on $S^{4n-1}$ satisfying $\sigma(S^{4n-1})=1$, and
\begin{equation}\begin{aligned}\label{def poisson k}
P(\mathbf{a},\zeta):=\left(\frac{1-|\mathbf{a}|^2}{|1-\mathbf{a}^*\zeta|^2}\right)^{2n+1}
\end{aligned}\end{equation}
is the $\mathcal{QM}$-Poisson kernel for $\triangle$ on $B^{4n}\times S^{4n-1}$.
\end{thm}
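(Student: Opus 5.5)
The proof follows the classical scheme for invariant Laplacians on the real and complex balls, with the quaternionic steps supplied by Proposition \ref{invariant laplace}, the group structure (Theorem \ref{thm mobius}), and Theorem \ref{no boundary decomp}. It has four parts: the kernel is $\mathcal{QM}$-harmonic in $\mathbf{a}$, it is an approximate identity, $P[\varphi]$ extends continuously to $\varphi$ on the sphere, and uniqueness.

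\emph{Harmonicity of the kernel.} Fix $\zeta\in S^{4n-1}$. By ${\rm Sp}(n){\rm Sp}(1)$-invariance of $\triangle$ we may take $\zeta=(1,0,\dots,0)^t$, so $|1-\mathbf{a}^*\zeta|^2=|1-\bar a_1|^2=|1-a_1|^2$. The kernel is then $P(\mathbf{a})=\bigl((1-|\mathbf{a}|^2)/|1-a_1|^2\bigr)^{2n+1}$. We apply the explicit formula \eqref{ex form} to $P$ using $\nabla_A^{A'}z_B^{B'}=\delta_{AB}\delta^{A'B'}$, with
\[
|\mathbf{a}|^2=\sum_A z_A^{0'}z_{A}^{1'}\text{-type pairings},\qquad |1-a_1|^2=(1-z_1^{0'})(1-z_{n+1}^{1'})+z_{1}^{1'}z_{n+1}^{0'}\ \ (\text{suitably signed}).
\]
Writing $P=g^{2n+1}$ and using $\triangle(g^s)=s g^{s-1}\triangle g+s(s-1)g^{s-2}\Gamma(g,g)$, where $\Gamma$ is the principal (carré du champ) part of \eqref{ex form}, the exponent $2n+1$ should be exactly the value making the two terms cancel. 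This is the analogue of $n$ in $\mathbb{C}^n$ and $(m-1)/2\cdot\ldots$ in $\mathbb{R}^m$. Equivalently, one checks that $P(\mathbf{a},\zeta)=P(\mathbf{0},\varphi_{\mathbf{a}}^{-1}\zeta)\times$ (Jacobian of $\varphi_{\mathbf{a}}$ on the sphere), and derives harmonicity by invariance.

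\emph{Approximate identity.} First, $P(\mathbf{0},\zeta)=1$. Next, $\int P(\mathbf{a},\zeta)\,d\sigma(\zeta)$ is a radial $\mathcal{QM}$-harmonic function, smooth on the ball, so by Theorem \ref{no boundary decomp} only the $a=b=0$ term survives. Since ${}_2F_1(0,-1,2n;\cdot)\equiv1$, the integral equals $1$. Moreover $P>0$, and for $|\zeta-\zeta_0|>\delta$ we have $P(\mathbf{a},\zeta)\to0$ uniformly as $\mathbf{a}\to\zeta_0$, since $|1-\mathbf{a}^*\zeta|$ stays bounded below while the numerator tends to $0$. The standard argument then gives $P[\varphi](\mathbf{a})\to\varphi(\zeta_0)$, so $P[\varphi]$ is a solution.

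\emph{Uniqueness.} Here degeneracy matters, so I would avoid a naive maximum principle. Instead, use the mean-value property: for $\mathcal{QM}$-harmonic $u$, $u(\mathbf{a})=\int u(\varphi_{\mathbf{a}}(r\zeta))\,d\sigma(\zeta)$. This follows because $u\circ\varphi_{\mathbf{a}}$ is $\mathcal{QM}$-harmonic and its spherical average is radial, hence constant by the $(0,0)$ term of Theorem \ref{no boundary decomp}. Alternatively, use the maximum principle directly: at an interior maximum of $u-\epsilon(\ldots)$, the operator $\triangle_0(u\circ\varphi_{\mathbf{a}})(\mathbf{0})\le0$ by \eqref{def inv op}. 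This makes the maximum principle work despite degeneracy, because $\triangle$ is defined through the genuinely elliptic $\triangle_0$. Perturbing by $\epsilon|\mathbf{q}|^2$, which has $\triangle>0$ by \eqref{ex form}, gives a strict contradiction. Hence $u-P[\varphi]$, which is $\mathcal{QM}$-harmonic, continuous on $\overline{B^{4n}}$ and zero on the boundary, vanishes.

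\emph{Main obstacle.} The hardest step is the direct computation that $\triangle P=0$, which requires careful handling of the noncommuting $z_A^{A'}$ bookkeeping in $\mathfrak{D}$ and $R^{0'}R^{1'}$. I would first try the invariance route: prove the Jacobian identity $d\sigma(\varphi_{\mathbf{a}}\zeta)=P(\mathbf{a},\zeta)\,d\sigma(\zeta)$ via the identity $1-|\varphi_{\mathbf{a}}(\mathbf{q})|^2=\frac{(1-|\mathbf{a}|^2)(1-|\mathbf{q}|^2)}{|1-\mathbf{a}^*\mathbf{q}|^2}$, since $4n-1+\ldots$ counts to the exponent $2n+1$ only if the boundary Jacobian is not conformal. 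In that case the direct computation at $\zeta=e_1$ is the fallback.
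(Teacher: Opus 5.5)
Your argument has a gap at the step that existence rests on, $\triangle P(\cdot,\zeta)=0$, which is never proved. Saying that $2n+1$ ``should be exactly the value making the two terms cancel'' in $\triangle(g^s)=sg^{s-1}\triangle g+s(s-1)g^{s-2}\Gamma(g,g)$ is a guess until $\triangle g$ and $\Gamma(g,g)$ are actually computed. Your preferred invariance route does not close the gap either. Even granting the boundary Jacobian identity $\int_{S^{4n-1}}f\circ\varphi_{\mathbf a}\,d\sigma=P[f](\mathbf a)$, which you also leave unproved, it only writes $P[f](\mathbf a)$ as an average over $\varphi_{\mathbf a}(S^{4n-1})$. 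To turn that into $\triangle P[f]=0$ you would still need either an independent existence theorem (so that the mean value property plus uniqueness identifies the solution with $P[f]$), or exactly a computation of $\triangle_0P(\cdot,\eta)$ at $\mathbf 0$. Your normalization $\int P(\mathbf a,\zeta)\,d\sigma(\zeta)=1$ via Theorem \ref{no boundary decomp} also presupposes that the kernel is $\mathcal{QM}$-harmonic. So existence is currently unproved in your proposal.

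The missing idea is the transformation rule $P(\varphi_{\mathbf a}(\mathbf q),\zeta)=P(\mathbf q,\varphi_{\mathbf a}(\zeta))\,P(\mathbf a,\zeta)$. It follows from \eqref{psi eq} together with $|1-\varphi_{\mathbf a}(\mathbf q)^*\zeta|=|1-\mathbf q^*\mathbf a|^{-1}|1-\mathbf a^*\zeta|\,|1-\mathbf q^*\varphi_{\mathbf a}(\zeta)|$, obtained by expanding $\varphi_{\mathbf a}(\mathbf q)^*$ and using $\mathbf A=\mathbf A^*$. By \eqref{def inv op} this gives $\triangle P(\cdot,\zeta)(\mathbf a)=P(\mathbf a,\zeta)\,\triangle_0P(\cdot,\eta)(\mathbf 0)$ with $\eta=\varphi_{\mathbf a}(\zeta)\in S^{4n-1}$. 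After an ${\rm Sp}(n)$-rotation you may take $\eta=(1,0,\dots,0)^t$. The quadratic Taylor part of $P(\mathbf q,\eta)$ is then $(2n+1)\bigl(-|\mathbf q|^2-|q_1|^2+(4n+4)({\rm Re}\,q_1)^2\bigr)$, whose Euclidean Laplacian is $(2n+1)(-8n-8+8n+8)=0$. For a general exponent $s$ the same computation gives $s(4s-4-8n)$, which is where $2n+1$ is forced. This is how the paper does it in Proposition \ref{lem pf}(1).

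With that inserted, the rest of your argument is correct, though it is organized differently from the paper's proof of this theorem. The paper derives the kernel from the Green formula (Proposition \ref{Green}) with Green function $g(|\varphi_{\mathbf a}(\mathbf q)|^2)$, $g(t)=\int_t^1((1-s)/s)^{2n}ds$. That derivation explains where the kernel comes from. However, it only yields a representation formula for an already given solution, and letting $r\to1$ implicitly uses control of $R^{1'}u$ near the sphere. Existence is supplied later, by Propositions \ref{lem pf} and \ref{lim pf}.

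Your direct verification is essentially that later material plus Theorem \ref{max}, which is exactly your uniqueness argument. Normalizing through Theorem \ref{no boundary decomp} is a valid substitute for the paper's mean-value formula. For $n=1$, where that theorem is not stated, use the radial equation $t(1-t)g''+2ng'=0$ directly.
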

We also prove the following Fatou type theorem about non-tangential convergence of $\mathcal{QM}$-Poisson integrals.
For $0<\alpha<\infty$, denote by $A_{\alpha}(\zeta)$ the \emph{non-tangential approach region} at $\zeta\in S^{4n-1}$
\begin{equation}\begin{aligned}\label{non tan}
A_{\alpha}(\zeta):=\{\mathbf{q}\in B^{4n};|\mathbf{q}-\zeta|<\alpha(1-|\mathbf{q}|)\}.
\end{aligned}\end{equation}

\begin{thm}\label{Fatou}
\;If $f\in L^1(S^{4n-1})$, then for $\alpha>1$ and almost all $\zeta$ on $S^{4n-1}$,
\begin{equation}\begin{aligned}\label{nontan lim}
\lim_{\substack{\mathbf{q}\rightarrow\zeta\\ \mathbf{q}\in A_{\alpha}(\zeta)}} P[f](\mathbf{q})=f(\zeta).
\end{aligned}\end{equation}

\end{thm}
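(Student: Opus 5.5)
The approach is the classical one for Fatou theorems with a nonisotropic kernel: dominate $P[f]$ on $A_\alpha(\zeta)$ by a maximal function of $f$, prove a weak-type $(1,1)$ bound for that maximal function, and then argue by density.

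\textbf{Step 1: pointwise domination.} Define the quaternionic nonisotropic quasi-distance on $S^{4n-1}$ by
\[
d(\zeta,\eta):=|1-\eta^*\zeta|^{1/2},
\]
and let $Q(\zeta,\delta)=\{\eta\in S^{4n-1};\, d(\zeta,\eta)<\delta\}$ be the associated balls.
\begin{itemize}
\item \emph{Quasi-metric property.} I expect $d$ to satisfy the triangle inequality. This should follow by embedding via $\tau$ and repeating the argument of the complex case (Rudin), using $|1-\eta^*\zeta|=|1-\zeta^*\eta|$.
\item \emph{Volume of balls.} The measure is $\sigma(Q(\zeta,\delta))\approx \delta^{4n+2}$. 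The reason is that $1-\eta^*\zeta$ has one real direction, of order $\delta^2$, and three imaginary directions $\mathbf{i},\mathbf{j},\mathbf{k}$, each of order $\delta^2$. The remaining $4n-4$ directions are each of order $\delta$. This gives $(\delta^2)^{4}\delta^{4n-4}\cdot\delta^{-2}$; more carefully, the homogeneous dimension is $(4n-4)+2\cdot 3=4n+2$. This matches the exponent $2n+1$ in \eqref{def poisson k}.
\item \emph{Kernel estimate.} For $\mathbf{q}\in A_\alpha(\zeta)$, I show
\[
|1-\eta^*\mathbf{q}|\gtrsim_\alpha \max\bigl(1-|\mathbf{q}|,\ d(\zeta,\eta)^2\bigr).
\]
I also use $|1-\zeta^*\mathbf{q}|\le C_\alpha(1-|\mathbf{q}|)$ on the approach region.
\end{itemize}
Split the sphere dyadically into $Q(\zeta,2^{k}\sqrt{1-|\mathbf{q}|})$. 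On each dyadic annulus the kernel is bounded by $(1-|\mathbf{q}|)^{2n+1}/(2^{2k}(1-|\mathbf{q}|))^{2n+1}$. Summing over $k$ gives
\[
\sup_{\mathbf{q}\in A_\alpha(\zeta)}|P[f](\mathbf{q})|\le C_\alpha\, Mf(\zeta),
\qquad
Mf(\zeta):=\sup_{\delta>0}\frac{1}{\sigma(Q(\zeta,\delta))}\int_{Q(\zeta,\delta)}|f|\,d\sigma .
\]

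\textbf{Step 2: weak $(1,1)$ bound for $M$.} Since $d$ is a quasi-metric and $\sigma$ is doubling on its balls (by the volume estimate above), the Vitali covering lemma gives $\sigma\{Mf>\lambda\}\le C\|f\|_1/\lambda$.

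\textbf{Step 3: density argument.} For continuous $g$, Theorem \ref{posson} gives $P[g]\to g$ continuously up to the boundary, hence along $A_\alpha(\zeta)$. For $f\in L^1$, take continuous $g$ with $\|f-g\|_1<\varepsilon$. Then
\[
\limsup_{\mathbf{q}\to\zeta,\ \mathbf{q}\in A_\alpha(\zeta)}|P[f](\mathbf{q})-f(\zeta)|\le C\,M(f-g)(\zeta)+|f-g|(\zeta).
\]
By Step 2 and Chebyshev's inequality, the set where the right-hand side exceeds $\lambda$ has measure at most $C\varepsilon/\lambda$. Letting $\varepsilon\to0$ gives the limit for almost every $\zeta$.

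\textbf{Main obstacle.} The main difficulty is the geometric part of Step 1: proving the triangle inequality for $d$ and the ball-volume estimate $\sigma(Q(\zeta,\delta))\approx\delta^{4n+2}$ in the quaternionic setting. Noncommutativity means the complex-case proofs do not transfer verbatim.
\begin{itemize}
\item For the triangle inequality, I would first try writing $1-\eta^*\zeta=\langle\zeta-\eta,\zeta\rangle$-type identities.
\item For the volume, I would reduce by $\mathrm{Sp}(n)\mathrm{Sp}(1)$-invariance to $\zeta=e_1$. Then compute the pushforward of $\sigma$ under $\eta\mapsto\eta_1\in B^4$, which has density $c(1-|\eta_1|^2)^{2n-3}$. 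This reduces the estimate to integrating that density over $\{|1-\eta_1|<\delta^2\}$.
\end{itemize}
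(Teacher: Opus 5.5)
Your argument is correct, and Steps 1--2 coincide with the paper's. The paper also uses $d(\mathbf a,\mathbf b)=|1-\mathbf a^*\mathbf b|^{1/2}$. It proves the triangle inequality for $d$ on $\overline{B^{4n}}$ by rotating the middle point to $(r,0,\dots,0)^t$ and splitting off the first coordinate. This is Rudin's argument verbatim: since $r$ is real, noncommutativity causes no trouble and no embedding via $\tau$ is needed, so your ``main obstacle'' is milder than you expect. The paper obtains $\sigma(Q(\zeta,\delta))\approx\delta^{4n+2}$ exactly as you plan, from the push-forward density $c_n(1-|p|^2)^{2n-3}$ and the substitution $p_1=\delta^2(1-q_1)^{-1}$. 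It bounds $M_\alpha P[\mu]\le CM|\mu|$ by a dyadic decomposition and gets weak $(1,1)$ from a Vitali lemma.

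The routes differ only at the end. The paper proves the differentiation theorem for the balls $Q(\zeta,\delta)$, together with a lemma that $D|\mu|(\zeta)=0$ forces $P[\mu]\to 0$ in $A_\alpha(\zeta)$. It then applies this to $d\mu_\zeta=|f-f(\zeta)|\,d\sigma$ at Lebesgue points. Your density argument with continuous $g$ is shorter and skips the differentiation theorem, whose proof is the same weak-type-plus-density argument anyway. The paper's route gives the sharper information that convergence holds at every Lebesgue point of $f$.

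Two points in Step 1. First, as written your annulus bound has the wrong exponent. Since $P(\mathbf q,\eta)=(1-|\mathbf q|^2)^{2n+1}/|1-\eta^*\mathbf q|^{4n+2}$, on the $k$-th annulus one gets $P\lesssim(1-|\mathbf q|)^{2n+1}/(2^{2k}(1-|\mathbf q|))^{4n+2}$. Together with $\sigma(Q(\zeta,2^k\sqrt{1-|\mathbf q|}))\approx 2^{k(4n+2)}(1-|\mathbf q|)^{2n+1}$, this gives the summable contribution $2^{-k(4n+2)}Mf(\zeta)$.

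Second, your kernel estimate is exactly the right one, and it needs no triangle inequality. Write $1-\eta^*\zeta=(1-\eta^*\mathbf q)+\eta^*(\mathbf q-\zeta)$ and use $|\mathbf q-\zeta|<\alpha(1-|\mathbf q|)\le\alpha|1-\eta^*\mathbf q|$; this gives $|1-\eta^*\zeta|<(1+\alpha)|1-\eta^*\mathbf q|$. The paper argues instead with the Euclidean distance, writing the kernel's denominator as $|\tau-\mathbf q|^{4n+2}$. For $n\ge 2$ one only has $|1-\mathbf q^*\tau|\le|\tau-\mathbf q|$, so a lower bound on $|\mathbf q-\tau|$ does not bound $P(\mathbf q,\tau)$ from above. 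Your formulation is what makes that step work.
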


  The paper is organized as follows. In Section 2, we construct $\mathcal{QM}$-transformations and show them constituting a group. In Section 3, we find the explicit expression of $\mathcal{QM}$-invariant Laplacian $\triangle$, and also that of the Casimir operators of ${\rm Sp}(n)$ and ${\rm Sp}(1)$. Then, in Section 4, by writing $\triangle$ in terms of Casimir operators and radial derivatives, we establish the expansion of $\mathcal{QM}$-harmonic functions in terms of quaternionic spherical harmonics multiplied by hypergeometric functions as radial parts. In Section 5, we prove a Green formula associated to $\triangle$ and construct the $\mathcal{QM}$-Poisson integral formula to solve the Dirichlet problem for $\mathcal{QM}$-invariant Laplace equation. In Section 6, we prove the non-tangential convergence of the $\mathcal{QM}$-Poisson integrals. In the appendix, we calculate Casimir operators of Sp$(n)$ and Sp$(1)$.

\section{$\mathcal{QM}$-transformations}

\subsection{The group  Sp$(n)$Sp$(1)$}
{\it Compact symplectic group} ${\rm Sp}(n)$ consists of all quaternionic $(n\times n)$-matrix such that $AA^*=I_n$, where $*$ is the quaternionic conjugate, and ${\rm Sp}(1)$ is the group of $\mathbb{R}$-linear transformations of $\mathbb{H}^n$ given by right multiplication by unit quaternions \cite{Co}. The ${\rm Sp}(n){\rm Sp}(1)$-action on $L^2(S^{4n-1})$ is defined as $[\pi(g)f](\mathbf{q})=f(U^*\mathbf{q}p)$, for $g=(U,p)\in {\rm Sp}(n){\rm Sp}(1)$.

The complexification of Lie algebra $\mathfrak{sp}(n)$ of the group ${\rm Sp}(n)$ is $\mathfrak{sp}(2n,\mathbb{C})$, which is a subalgebra of $\mathfrak{gl}(2n,\mathbb{C})$. Let $E_{i,j}$ be the complex $(n\times n)$-matrix with $1$ only at the $(i,j)$-entry and all other entries zero. Its Cartan subalgebra $\mathfrak{h}$ is spanned by the $(2n\times2n)$-matrices $H_i = \left(\begin{matrix}E_{i,i}&0\\0&-E_{i,i}\end{matrix}\right)$.

  We will correspondingly take as basis for the dual vector space $\mathfrak{h}^{*}$ the dual basis $L_j$ , where $\langle L_j , H_i\rangle = \delta_{i,j}$.
The roots of the Lie algebra $\mathfrak{sp}({2n},\mathbb{C})$ are the vectors $\pm L_i\pm L_j$, where the positive root are $L_i-L_j$, $L_i+L_j$, $2L_i$ ($1\leq i<j\leq n$) with eigenvectors \cite[Section 16.2]{Fulton}
\begin{equation}\begin{aligned}\label{spn basis}
X_{i,j}&=E_{i,j}-E_{n+j,n+i}=\bigg(\begin{matrix}E_{i,j} &0\\0&-E_{j,i}\end{matrix}\bigg),\qquad X^{-}_{i,j}=E_{j,i}-E_{n+i,n+j}=\bigg(\begin{matrix}E_{j,i} &0\\0&-E_{i,j}\end{matrix}\bigg),\\
Y_{i,j}&=E_{i,n+j}+E_{j,n+i}=\bigg(\begin{matrix}0&E_{i,j}+E_{j,i}\\0&0\end{matrix}\bigg),\qquad Y^{-}_{i,j}=E_{n+i,j}+E_{n+j,i}=\bigg(\begin{matrix}0&0\\E_{i,j}+E_{j,i}&0\end{matrix}\bigg),\\
U_i&=E_{i,n+i}=\bigg(\begin{matrix}0&E_{i,i}\\0&0\end{matrix}\bigg),\qquad\qquad\qquad\quad\qquad
U^{-}_i=E_{n+i,i}=\bigg(\begin{matrix}0&0\\E_{i,i}&0\end{matrix}\bigg).
\end{aligned}\end{equation}
Then, $\mathfrak{sp}(2n,\mathbb{C})=\mathfrak{g}_{+}\bigoplus \mathfrak{h}\bigoplus \mathfrak{g}_{-}$, with
\begin{equation}\begin{aligned}
\mathfrak{g}_{+}=span\left\{X_{i,j},Y_{i,j},U_i\right\},\qquad\mathfrak{g}_{-}=span\{X^{-}_{i,j},Y^{-}_{i,j},U^{-}_i\},
\end{aligned}\end{equation}
and $\mathfrak{sl}(2,\mathbb{C})=\mathfrak{g}_{+}'\bigoplus \mathfrak{h}'\bigoplus \mathfrak{g}_{-}'$ with $\mathfrak{g}_{+}'=\mathbb{C}U,$ $\mathfrak{g}_{-}'=\mathbb{C}U^{-}$, $\mathfrak{h}'=\mathbb{C}H$, where
\begin{equation}\begin{aligned}\label{sp1 basis}
H&=\bigg(\begin{matrix}1 &0\\0&-1\end{matrix}\bigg),\qquad U=\bigg(\begin{matrix}0&1\\0&0\end{matrix}\bigg),\qquad U^{-}=\bigg(\begin{matrix}0&0\\1&0\end{matrix}\bigg).
\end{aligned}\end{equation}

  A complex $\mathfrak{sp}(n)\mathfrak{sp}(1)$-module is a $\mathfrak{sp}(2n,\mathbb{C})\mathfrak{sl}(2,\mathbb{C})$-module.
A vector $X$ for a $\mathfrak{sp}(2n,\mathbb{C})\mathfrak{sl}(2,\mathbb{C})$-module is called the {\it highest weight vector} if $X$ is a eigenvector of $\mathfrak{h}$ and $\mathfrak{h}'$, and killed by $\mathfrak{g}_{+}$ and $\mathfrak{g}_{+}'$. A $\mathfrak{sp}(2n,\mathbb{C})\mathfrak{sl}(2,\mathbb{C})$-module $V$ is called the {\it highest weight module} if there exists a highest weight vector of $V$.

\subsection{$\mathcal{QM}$-transformations}

It is known \cite[(1.4)]{wang4} that for an invertible $(n+1)\times (n+1)-$matrix
\begin{equation}\begin{aligned}
g=\begin{pmatrix} \mathbf{a}_{1\times 1} & \mathbf{b}_{1\times n} \\ \mathbf{c}_{n\times 1} & \mathbf{d}_{n\times n} \end{pmatrix}\in GL(n+1,\mathbb{H}),
\end{aligned}\end{equation}
we can define a {\it quaternionic fractional linear transformation} $\varphi_{g}:\mathbb{H}^n \rightarrow \mathbb{H}^n\setminus \{\mathbf{a}+\mathbf{b}\mathbf{q}\neq 0\}$ by
\begin{equation}\begin{aligned}\label{def m g}
\mathbf{q}\mapsto \varphi_{g}(\mathbf{q}):=(\mathbf{c}+\mathbf{d}\mathbf{q})(\mathbf{a}+\mathbf{b}\mathbf{q})^{-1}.
\end{aligned}\end{equation}
Then, for $\mathbf{a}\in B^{4n}$ and $\mathbf{A}$ given by \eqref{hermitian}, let
\begin{equation}\begin{aligned}\label{def g b}
g_{\mathbf{a}}: =\begin{pmatrix} 1 & -\mathbf{a}^* \\ \mathbf{a} & -\mathbf{A} \end{pmatrix}\in GL(n+1,\mathbb{H}).
\end{aligned}\end{equation}
It is invertible because $\mathbf{A}$ in \eqref{hermitian} has two eigenspaces span$\{\mathbf{a}\}$ and $\mathbf{a}^{\bot}$ with eigenvalue 1 and $s$, respectively,
i.e.
\begin{equation}\begin{aligned}\label{eigen}
\mathbf{A}\mathbf{a}=\mathbf{a},\qquad\mathbf{A}\mathbf{v}=s_{\mathbf{a}}\mathbf{v},
\end{aligned}\end{equation}
for all $\mathbf{v}\in \mathbf{a}^{\bot}$. The quaternionic fractional linear transformation $\varphi_{g_{\mathbf{a}}}$ associated to $g_{\mathbf{a}}$ is exactly $\varphi_{\mathbf{a}}$ given by \eqref{def varphib}.

\begin{prop}
For any $\mathbf{a}\in B^{4n}$, $\varphi_{\mathbf{a}}$ is a diffeomorphism from $\overline{B^{4n}}$ to itself. It is an involution.

\end{prop}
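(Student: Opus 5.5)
The plan is to prove the classical identity
\begin{equation*}
1-|\varphi_{\mathbf{a}}(\mathbf{q})|^2=\frac{(1-|\mathbf{a}|^2)(1-|\mathbf{q}|^2)}{|1-\mathbf{a}^*\mathbf{q}|^2},
\end{equation*}
and then check the involution property by direct algebra. Smoothness is the easy part. For $|\mathbf{a}|<1$ and $|\mathbf{q}|\le 1$ we have $|\mathbf{a}^*\mathbf{q}|\le|\mathbf{a}||\mathbf{q}|<1$, so the quaternion $1-\mathbf{a}^*\mathbf{q}$ never vanishes on a neighbourhood of $\overline{B^{4n}}$. Hence $\varphi_{\mathbf{a}}$ is smooth (indeed real-analytic) there.

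\textbf{Step 1: the identity.} Write $\lambda=1-\mathbf{a}^*\mathbf{q}\in\mathbb{H}$. Then $|\varphi_{\mathbf{a}}(\mathbf{q})|^2=|\lambda|^{-2}|\mathbf{a}-\mathbf{A}\mathbf{q}|^2$, since right multiplication by $\lambda^{-1}$ scales the norm by $|\lambda|^{-1}$. Decompose $\mathbf{q}=\mathbf{a}\,t+\mathbf{v}$ with $t=|\mathbf{a}|^{-2}\mathbf{a}^*\mathbf{q}\in\mathbb{H}$ and $\mathbf{a}^*\mathbf{v}=0$ (the case $\mathbf{a}=0$ is trivial, since then $\varphi_{\mathbf{0}}(\mathbf{q})=-\mathbf{q}$). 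By \eqref{eigen}, $\mathbf{A}\mathbf{q}=\mathbf{a}t+s_{\mathbf{a}}\mathbf{v}$, because $\mathbf{A}$ is right $\mathbb{H}$-linear. So $\mathbf{a}-\mathbf{A}\mathbf{q}=\mathbf{a}(1-t)-s_{\mathbf{a}}\mathbf{v}$, and orthogonality gives
\begin{equation*}
|\mathbf{a}-\mathbf{A}\mathbf{q}|^2=|\mathbf{a}|^2|1-t|^2+(1-|\mathbf{a}|^2)|\mathbf{v}|^2 .
\end{equation*}
Also $\lambda=1-|\mathbf{a}|^2t$ and $|\mathbf{q}|^2=|\mathbf{a}|^2|t|^2+|\mathbf{v}|^2$. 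Expanding $|\lambda|^2-|\mathbf{a}-\mathbf{A}\mathbf{q}|^2$ with $r=|\mathbf{a}|^2$ and $\mathrm{Re}\,t$ is a commutative computation in the real numbers $|t|^2$ and $\mathrm{Re}\,t$. It should yield $(1-r)(1-r|t|^2-|\mathbf{v}|^2)=(1-|\mathbf{a}|^2)(1-|\mathbf{q}|^2)$. This step is the main obstacle: the noncommutativity is handled only by keeping $t$ on the right throughout and using $|xy|=|x||y|$.

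\textbf{Step 2: mapping properties.} The identity shows that $\varphi_{\mathbf{a}}$ maps $B^{4n}$ into $B^{4n}$ and $S^{4n-1}$ into $S^{4n-1}$.

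\textbf{Step 3: involution.} I would verify $\varphi_{\mathbf{a}}\circ\varphi_{\mathbf{a}}=\mathrm{id}$ in the same coordinates. Set $\mathbf{p}=\varphi_{\mathbf{a}}(\mathbf{q})=\bigl(\mathbf{a}(1-t)-s_{\mathbf{a}}\mathbf{v}\bigr)\lambda^{-1}$. Then $\mathbf{a}^*\mathbf{p}=r(1-t)\lambda^{-1}$, so
\begin{equation*}
1-\mathbf{a}^*\mathbf{p}=\bigl(\lambda-r(1-t)\bigr)\lambda^{-1}=(1-r)\lambda^{-1}.
\end{equation*}
Next, $\mathbf{A}\mathbf{p}=\bigl(\mathbf{a}(1-t)-s_{\mathbf{a}}^2\mathbf{v}\bigr)\lambda^{-1}$, hence
\begin{equation*}
\mathbf{a}-\mathbf{A}\mathbf{p}=\bigl(\mathbf{a}\lambda-\mathbf{a}(1-t)+(1-r)\mathbf{v}\bigr)\lambda^{-1}=(1-r)(\mathbf{a}t+\mathbf{v})\lambda^{-1}=(1-r)\mathbf{q}\lambda^{-1}.
\end{equation*}
Therefore
\begin{equation*}
\varphi_{\mathbf{a}}(\mathbf{p})=(1-r)\mathbf{q}\lambda^{-1}\bigl((1-r)\lambda^{-1}\bigr)^{-1}=\mathbf{q}.
\end{equation*}
Alternatively, one can compute $g_{\mathbf{a}}^2$ and check that it is a real scalar multiple of the identity, using $\mathbf{A}\mathbf{a}=\mathbf{a}$, $\mathbf{a}^*\mathbf{A}=\mathbf{a}^*$ and $\mathbf{A}^2=I-\mathbf{a}\mathbf{a}^*$. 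Composition of fractional maps corresponds to the matrix product. However, the direct check above avoids having to prove that correspondence.

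\textbf{Step 4: conclusion.} Since $\varphi_{\mathbf{a}}$ is a smooth self-map of $\overline{B^{4n}}$ which is its own inverse, it is a bijective diffeomorphism of $\overline{B^{4n}}$, and it preserves both $B^{4n}$ and $S^{4n-1}$.
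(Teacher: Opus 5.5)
Your argument is correct. The involution check is the same direct computation as in the paper. The paper shows $(1-\mathbf{a}^*\varphi_{\mathbf{a}}(\mathbf{q}))^{-1}=s_{\mathbf{a}}^{-2}(1-\mathbf{a}^*\mathbf{q})$ and reduces $\varphi_{\mathbf{a}}(\varphi_{\mathbf{a}}(\mathbf{q}))$ to $s_{\mathbf{a}}^{-2}(\mathbf{A}^2-\mathbf{a}\mathbf{a}^*)\mathbf{q}$, which it evaluates via the splitting along $\mathbf{a}$ and $\mathbf{a}^{\bot}$. You simply carry the splitting $\mathbf{q}=\mathbf{a}t+\mathbf{v}$ through from the start.

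The diffeomorphism half is where you genuinely differ. The paper solves $\varphi_{\mathbf{a}}(\mathbf{q})=\widetilde{\mathbf{q}}$ explicitly as $\mathbf{q}=(\mathbf{A}-\widetilde{\mathbf{q}}\mathbf{a}^*)^{-1}(\mathbf{a}-\widetilde{\mathbf{q}})$. That gives injectivity and a formula for the inverse, but it never checks that $\varphi_{\mathbf{a}}$ sends $\overline{B^{4n}}$ into $\overline{B^{4n}}$. Your route first proves $1-|\varphi_{\mathbf{a}}(\mathbf{q})|^2=(1-|\mathbf{a}|^2)(1-|\mathbf{q}|^2)|1-\mathbf{a}^*\mathbf{q}|^{-2}$ and then lets the involution supply bijectivity and a smooth inverse. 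This closes exactly that point. It also gives the preservation of $B^{4n}$ and $S^{4n-1}$, which the paper later uses without proof: in the proof of Theorem \ref{thm mobius}, and as \eqref{psi eq} in Sections 5--6. Your Step 1 expansion does come out as you expect, $|\lambda|^2-|\mathbf{a}-\mathbf{A}\mathbf{q}|^2=(1-r)(1-r|t|^2-|\mathbf{v}|^2)$, so it is not really an obstacle.

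There is one slip in your unused aside: $\mathbf{A}^2\neq I-\mathbf{a}\mathbf{a}^*$. In fact $\mathbf{A}^2=\mathbf{a}\mathbf{a}^*+(1-|\mathbf{a}|^2)I_n$, which you can check on $\mathbf{a}$ and on $\mathbf{a}^{\bot}$. This is what makes $g_{\mathbf{a}}^2=(1-|\mathbf{a}|^2)I_{n+1}$. With your formula, the lower-right block of $g_{\mathbf{a}}^2$ would be $I_n-2\mathbf{a}\mathbf{a}^*$, which is not scalar. Your main argument does not depend on this.
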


\begin{proof}
$\varphi_{\mathbf{a}}$ is a diffeomorphism since for each $\widetilde{\mathbf{q}}\in \overline{B^{4n}}$, $\varphi_{\mathbf{a}}(\mathbf{q})=\widetilde{\mathbf{q}}$ has a unique solution $\mathbf{q}=(\mathbf{A}-\widetilde{\mathbf{q}}\mathbf{a}^*)^{-1}(\mathbf{a}-\widetilde{\mathbf{q}})$, where $\mathbf{A}-\widetilde{\mathbf{q}}\mathbf{a}^*$ is invertible by \eqref{eigen}.

  To see it is an involution, note that for any $\mathbf{q}\in B^{4n}$,
\begin{equation}\begin{aligned}\label{pra 1}
\varphi_{\mathbf{a}}(\varphi_{\mathbf{a}}(\mathbf{q}))=(\mathbf{a}-\mathbf{A}\varphi_{\mathbf{a}}(\mathbf{q}))(1-\mathbf{a}^*\varphi_{\mathbf{a}}(\mathbf{q}))^{-1},
\end{aligned}\end{equation}
by definition \eqref{def varphib},
where
\begin{equation}\begin{aligned}\label{involution 1}
(1-\mathbf{a}^*\varphi_{\mathbf{a}}(\mathbf{q}))^{-1}&=(1-\mathbf{a}^*(\mathbf{a}-\mathbf{A}\mathbf{q})(1-\mathbf{a}^*\mathbf{q})^{-1})^{-1}\\
&=(1-(|\mathbf{a}|^2-\mathbf{a}^*\mathbf{q})(1-\mathbf{a}^*\mathbf{q})^{-1})^{-1}=\frac{1}{s_{\mathbf{a}}^2}(1-\mathbf{a}^*\mathbf{q})
  \end{aligned}\end{equation}
by \eqref{eigen} and $\mathbf{A}=\mathbf{A}^*$. Now substitute \eqref{involution 1} into \eqref{pra 1} to get
\begin{equation}\begin{aligned}\label{involution 2}
\varphi_{\mathbf{a}}(\varphi_{\mathbf{a}}(\mathbf{q}))&=\frac{1}{s_{\mathbf{a}}^2}(\mathbf{a}-\mathbf{A}(\mathbf{a}-\mathbf{A}\mathbf{q})(1-\mathbf{a}^*\mathbf{q})^{-1})(1-\mathbf{a}^*\mathbf{q})\\
&=\frac{1}{s_{\mathbf{a}}^2}(-\mathbf{a}\mathbf{a}^*\mathbf{q}+\mathbf{A}^2\mathbf{q})=\mathbf{q},
\end{aligned}\end{equation}
by applying \eqref{eigen} to $\mathbf{q}$ decomposed in terms of $\mathbf{a}$ and $\mathbf{a}^{\bot}$. The Proposition is proved.
\end{proof}

A {\it quaternonic unitary transformation} is the linear transformation $(\mathbf{U},q_0):\mathbb{H}^n \rightarrow \mathbb{H}^n$, given by $\mathbf{q}\mapsto\mathbf{U}\mathbf{q}q_0^{*}$, for some $\mathbf{U}\in {\rm Sp}(n)$ and $q_0\in {\rm Sp}(1)$. In the complex case, the group of holomorphic M\"{o}bius transformations is exactly the group of biholomorphic transformations of the unit ball \cite{Rudin}. We can also show $\mathcal{QM}$-transformations constituting a group.

\begin{thm}\label{thm mobius}

(1)\;Given $\mathbf{a},\mathbf{b}\in B^{4n}$, there exists $(\mathbf{U}, q_0)\in {\rm Sp}(n){\rm Sp}(1)$ such that
\begin{equation}\begin{aligned}\label{m 1}
\varphi_{\mathbf{a}}\circ\varphi_{\mathbf{b}}=(\mathbf{U},q_0)\circ\varphi_{\mathbf{c}}\qquad {\rm with }\qquad\mathbf{c}=\varphi_{\mathbf{b}}(\mathbf{a}).
\end{aligned}\end{equation}

(2)\;For $\mathbf{a}\in B^{4n}$ and $(\mathbf{U}_1,q_1)\in {\rm Sp}(n){\rm Sp}(1)$, there exists $\mathbf{b}\in B^{4n}$ and $(\mathbf{U}_2,q_2)\in {\rm Sp}(n){\rm Sp}(1)$ such that
\begin{equation}\begin{aligned}\label{m 2}
\varphi_{\mathbf{a}}\circ (\mathbf{U}_1,q_1)=(\mathbf{U}_2,q_2)\circ\varphi_{\mathbf{b}}.
\end{aligned}\end{equation}
(3)\;$G=\{(\mathbf{U},q_0)\circ\varphi_{\mathbf{a}} ;(\mathbf{U},q_0)\in {\rm Sp}(n){\rm Sp}(1),\;\mathbf{a}\in B^{4n}\}$ is a group.
\end{thm}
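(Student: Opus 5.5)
The plan is to work with the matrix picture $\varphi_g$ of \eqref{def m g}, together with the standard characterization of automorphisms of the ball that fix the origin. Composition of quaternionic fractional linear transformations corresponds to matrix multiplication: $\varphi_{g}\circ\varphi_{h}=\varphi_{hg}$ or $\varphi_{gh}$, depending on the convention. Checking this for \eqref{def m g} is a direct computation with right inverses, and I will fix the convention first. With the Hermitian form $J=\mathrm{diag}(1,-I_n)$ on $\mathbb{H}^{n+1}$, we have $g_{\mathbf{a}}^*Jg_{\mathbf{a}}=s_{\mathbf{a}}^2J$. This follows from $\mathbf{A}=\mathbf{A}^*$, $\mathbf{A}\mathbf{a}=\mathbf{a}$ and $\mathbf{A}^2=\mathbf{A}^2$ restricted to the eigenspaces in \eqref{eigen}, i.e.\ $\mathbf{A}^2-\mathbf{a}\mathbf{a}^*=s_{\mathbf{a}}^2 I_n$, the identity already used in \eqref{involution 2}. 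Hence every finite product of the $g_{\mathbf{a}}$'s and of the matrices $\mathrm{diag}(q_0,\mathbf{U}q_0)$ is a real multiple of an element of $\mathrm{Sp}(1,n)$. Note that $\mathrm{diag}(q_0,\mathbf{U}q_0)$ realizes $\mathbf{q}\mapsto \mathbf{U}q_0\mathbf{q}q_0^{-1}$; one adjusts $\mathbf{U}$ so that this gives $(\mathbf{U},q_0)$.

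For (1), consider $\psi:=\varphi_{\mathbf{c}}\circ\varphi_{\mathbf{a}}\circ\varphi_{\mathbf{b}}$ with $\mathbf{c}=\varphi_{\mathbf{b}}(\mathbf{a})$. Since $\varphi_{\mathbf{b}}$ is an involution, $\varphi_{\mathbf{b}}(\mathbf{c})=\mathbf{a}$. Also $\varphi_{\mathbf{a}}(\mathbf{a})=(\mathbf{a}-\mathbf{a})(\cdots)^{-1}=\mathbf{0}$ and $\varphi_{\mathbf{c}}(\mathbf{0})=\mathbf{c}$. Composing in the order in which the maps act, $\varphi_{\mathbf{a}}\circ\varphi_{\mathbf{b}}$ sends $\mathbf{c}$ to $\mathbf{0}$. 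Therefore $\Phi:=\varphi_{\mathbf{a}}\circ\varphi_{\mathbf{b}}\circ\varphi_{\mathbf{c}}$ fixes $\mathbf{0}$, and it is $\varphi_g$ for some $g=\begin{pmatrix}\alpha&\beta\\ \gamma&\delta\end{pmatrix}$ with $g^*Jg=\lambda J$, $\lambda>0$.

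The fixed point condition $\Phi(\mathbf{0})=\mathbf{0}$ gives $\gamma\alpha^{-1}=0$, so $\gamma=0$. The $J$-relations then read $|\alpha|^2=\lambda$, $\bar\alpha\beta=0$ and $\delta^*\delta=\lambda I_n$. Thus $\beta=0$ and $\delta=\sqrt\lambda\,\mathbf{U}'$ with $\mathbf{U}'\in\mathrm{Sp}(n)$, so $\Phi(\mathbf{q})=\mathbf{U}'\mathbf{q}\,(\alpha/|\alpha|)^{-1}$. This is a quaternionic unitary transformation $(\mathbf{U},q_0)$, and composing with $\varphi_{\mathbf{c}}$ on the right and using the involution property gives \eqref{m 1}.

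The main obstacle is the bookkeeping that $\varphi_g$ respects composition and that $g^*Jg=\lambda J$ is preserved. Noncommutativity forces care with the order of factors and with right versus left scalars. If the matrix route is awkward, I would instead verify directly the identity $1-|\varphi_{\mathbf{a}}(\mathbf{q})|^2=\frac{(1-|\mathbf{a}|^2)(1-|\mathbf{q}|^2)}{|1-\mathbf{a}^*\mathbf{q}|^2}$. This shows that $\Phi$ is a diffeomorphism of $\overline{B^{4n}}$ fixing $\mathbf{0}$ and of fractional linear form, and the same coefficient analysis applies.

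For (2), set $\mathbf{b}:=(\mathbf{U}_1,q_1)^{-1}(\mathbf{a})=\mathbf{U}_1^*\mathbf{a}q_1$. Then $\varphi_{\mathbf{a}}\circ(\mathbf{U}_1,q_1)\circ\varphi_{\mathbf{b}}$ sends $\mathbf{0}\mapsto\mathbf{b}\mapsto\mathbf{a}\mapsto\mathbf{0}$, so the same origin-fixing argument shows it equals some $(\mathbf{U}_2,q_2)$. Since $\varphi_{\mathbf{b}}$ is an involution, this gives \eqref{m 2}.

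For (3), closure follows from (1) and (2): a product $(\mathbf{U},q)\varphi_{\mathbf{a}}(\mathbf{U}',q')\varphi_{\mathbf{b}}$ is rewritten by (2) as $(\mathbf{U},q)(\mathbf{U}_2,q_2)\varphi_{\mathbf{b}'}\varphi_{\mathbf{b}}$, and then by (1) as an element of $G$. The identity is $(I,1)\circ\varphi_{\mathbf{0}}$ up to sign, since $\varphi_{\mathbf{0}}(\mathbf{q})=-\mathbf{q}$ and $-I\in\mathrm{Sp}(n)$. For inverses, $((\mathbf{U},q)\circ\varphi_{\mathbf{a}})^{-1}=\varphi_{\mathbf{a}}\circ(\mathbf{U},q)^{-1}$, which lies in $G$ by (2). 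Associativity is inherited from composition of maps.
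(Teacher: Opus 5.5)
Your argument is correct, and it reaches the block-diagonal form by a different mechanism than the paper.

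The paper multiplies out $g_{\mathbf a}g_{\mathbf b}g_{\mathbf c}$ for (1), and $g_{\mathbf a}\,\mathrm{diag}(q_1,\mathbf U_1)\,g_{\mathbf b}$ for (2). It checks by hand that both off-diagonal blocks vanish, using $(1-\mathbf a^*\mathbf b)\mathbf c^*=\mathbf b^*-\mathbf a^*\mathbf B$, $\mathbf C\mathbf c=\mathbf c$ and $\mathbf a=\varphi_{\mathbf b}(\mathbf c)$. It then uses that $\mathbf q\mapsto\mathbf V\mathbf q q_0^{-1}$ preserves $S^{4n-1}$ to get $\mathbf V|q_0|^{-1}\in \mathrm{SO}(4n)\cap GL(n,\mathbb H)=\mathrm{Sp}(n)$.

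You instead use the single identity $g_{\mathbf a}^*Jg_{\mathbf a}=s_{\mathbf a}^2J$ with $J=\mathrm{diag}(1,-I_n)$. This does hold, since $\mathbf A^2-\mathbf a\mathbf a^*=s_{\mathbf a}^2I_n$. With it you only need to track where $\mathbf 0$ goes: $\gamma=0$ forces $|\alpha|^2=\lambda$, $\beta=0$ and $\delta\in\sqrt\lambda\,\mathrm{Sp}(n)$ with no further computation. What your route buys:
\begin{itemize}
\item Parts (1) and (2) become one lemma, namely the stabilizer of $\mathbf 0$ in $\mathbb R_{>0}\mathrm{Sp}(1,n)$.
\item The fact $\alpha\neq0$ comes free from $|\alpha|^2-|\gamma|^2=\lambda>0$.
\item Ball and sphere preservation become consequences of $1-|\varphi_g(\mathbf q)|^2=\lambda(1-|\mathbf q|^2)|\alpha+\beta\mathbf q|^{-2}$, rather than an input. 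The paper takes that input from its involution proposition, whose proof only shows solvability, not that images stay in the ball.
\end{itemize}
The paper's route avoids the indefinite form, at the price of two separate ad hoc cancellations and the orthogonality argument.

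You also treat the identity element, which the paper omits. Interpreting $\mathbf A:=I_n$ at $\mathbf a=\mathbf 0$ gives $\varphi_{\mathbf 0}=-\mathrm{id}$, so $\mathrm{id}=(-I_n,1)\circ\varphi_{\mathbf 0}$.

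Two cosmetic slips. The stray definition $\psi=\varphi_{\mathbf c}\circ\varphi_{\mathbf a}\circ\varphi_{\mathbf b}$ does not fix $\mathbf 0$ and should be dropped, since only $\Phi$ is used. The composition convention is $\varphi_g\circ\varphi_h=\varphi_{gh}$.
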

$G$ is called the {\it group of $\mathcal{QM}$-transformations}. To prove Theorem \ref{thm mobius}, we need the following lemma.

\begin{lem}\cite[(2.5)]{wang4}\label{multip}
For $g,h\in  GL(n+1,\mathbb{H})$, $\varphi_{g}\circ\varphi_{h}=\varphi_{gh}.$
\end{lem}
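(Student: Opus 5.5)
The statement to prove is Lemma \ref{multip}, namely $\varphi_g\circ\varphi_h=\varphi_{gh}$. My plan is a direct computation with block matrices. I will use the convention \eqref{def m g}, where the ``denominator'' row is the first block row of $g$. The guiding idea is to identify $\mathbf{q}\in\mathbb{H}^n$ with the quaternionic column vector $\begin{pmatrix}1\\ \mathbf{q}\end{pmatrix}\in\mathbb{H}^{n+1}$, taken modulo right multiplication by nonzero quaternions. Under this identification $\varphi_g$ is just left multiplication by $g$ followed by renormalizing the first entry to $1$ on the right.

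First, I will record the basic identity
\begin{equation*}
g\begin{pmatrix}1\\ \mathbf{q}\end{pmatrix}=\begin{pmatrix}\mathbf{a}+\mathbf{b}\mathbf{q}\\ \mathbf{c}+\mathbf{d}\mathbf{q}\end{pmatrix}=\begin{pmatrix}1\\ \varphi_g(\mathbf{q})\end{pmatrix}(\mathbf{a}+\mathbf{b}\mathbf{q}),
\end{equation*}
valid whenever $\mathbf{a}+\mathbf{b}\mathbf{q}\neq0$. Second, I apply this with $h$ in place of $g$, write $\lambda=\mathbf{a}_h+\mathbf{b}_h\mathbf{q}$, and multiply on the left by $g$. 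Because matrix multiplication on the left commutes with right scalar multiplication by $\lambda$ (associativity in $\mathbb{H}$), this gives
\begin{equation*}
(gh)\begin{pmatrix}1\\ \mathbf{q}\end{pmatrix}=g\begin{pmatrix}1\\ \varphi_h(\mathbf{q})\end{pmatrix}\lambda=\begin{pmatrix}1\\ \varphi_g(\varphi_h(\mathbf{q}))\end{pmatrix}\mu\lambda,
\end{equation*}
where $\mu$ is the denominator of $\varphi_g$ evaluated at $\varphi_h(\mathbf{q})$. Third, the same identity applied directly to $gh$ gives $\begin{pmatrix}1\\ \varphi_{gh}(\mathbf{q})\end{pmatrix}\nu$, where $\nu$ is the first entry. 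Comparing first entries gives $\nu=\mu\lambda\neq0$, and comparing the remaining entries and then right-multiplying by $\nu^{-1}$ gives $\varphi_{gh}(\mathbf{q})=\varphi_g(\varphi_h(\mathbf{q}))$.

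The only real obstacle is bookkeeping, in two respects. One is noncommutativity: all scalars must act on the right, which is exactly why the transformation is written $(\mathbf{c}+\mathbf{d}\mathbf{q})(\mathbf{a}+\mathbf{b}\mathbf{q})^{-1}$ and not with the inverse on the left. The other is domains: the identity holds on the set where both $\lambda\neq0$ and $\mu\neq0$, and there $\nu=\mu\lambda\ne0$ automatically. In the application to $g_{\mathbf{a}}$ on $\overline{B^{4n}}$, these denominators never vanish, since $|\mathbf{a}^*\mathbf{q}|<1$.
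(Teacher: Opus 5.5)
Your argument is correct and is essentially the paper's direct computation. The paper substitutes $\varphi_h(\mathbf{q})$ into $\varphi_g$ and implicitly right-multiplies numerator and denominator by $\mathbf{a}'+\mathbf{b}'\mathbf{q}$ before matching with the blocks of $gh$; your lift $\mathbf{q}\mapsto\begin{pmatrix}1\\ \mathbf{q}\end{pmatrix}$, together with associativity $(gh)v=g(hv)$ and $g(v\lambda)=(gv)\lambda$, encodes exactly that step, and your domain remark (valid where $\lambda\neq0$ and $\mu\neq0$, so $\nu=\mu\lambda\neq0$) is a precision the paper leaves unstated.
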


\begin{proof}

For
$h=\begin{pmatrix} \mathbf{a}'_{1\times 1} & \mathbf{b}'_{1\times n} \\ \mathbf{c}'_{n\times 1} & \mathbf{d}'_{n\times n} \end{pmatrix}$, $\varphi_{h}(\mathbf{q})=(\mathbf{c}'+\mathbf{d}'\mathbf{q})(\mathbf{a}'+\mathbf{b}'\mathbf{q})^{-1}$ by definition \eqref{def m g}. Then
\begin{equation}\begin{aligned}\label{gh}
\varphi_{g}\circ\varphi_{h}(\mathbf{q})&=(\mathbf{c}+\mathbf{d}\varphi_{h}(\mathbf{q}))(\mathbf{a}+\mathbf{b}\varphi_{h}(\mathbf{q}))^{-1}\\
&=(\mathbf{c}\mathbf{a}'+\mathbf{d}\mathbf{c}'+(\mathbf{c}\mathbf{b}'+\mathbf{d}\mathbf{d}')\mathbf{q})(\mathbf{a}\mathbf{a}'+\mathbf{b}\mathbf{c}'+(\mathbf{a}\mathbf{b}'+\mathbf{b}\mathbf{d}')\mathbf{q})^{-1}.\\
\end{aligned}\end{equation}
But
\begin{equation*}\begin{aligned}
gh=\begin{pmatrix} \mathbf{a}\mathbf{a}'+\mathbf{b}\mathbf{c}' & \mathbf{a}\mathbf{b}'+\mathbf{b}\mathbf{d}' \\ \mathbf{c}\mathbf{a}'+\mathbf{d}\mathbf{c}'& \mathbf{c}\mathbf{b}'+\mathbf{d}\mathbf{d}' \end{pmatrix}
.\\
\end{aligned}\end{equation*}
So the right hand side of \eqref{gh} is $\varphi_{gh}(\mathbf{q})$. Lemma \ref{multip} is proved.
\end{proof}

\begin{proof}[Proof of Theorem \ref{thm mobius}]
(1)\;Since $\varphi_{\mathbf{c}}$ is an involution by Proposition \ref{multip}, it's equivalent to show
 \begin{equation}\begin{aligned}\label{abc}
\varphi_{\mathbf{a}}\circ\varphi_{\mathbf{b}}\circ\varphi_{\mathbf{c}}=(\mathbf{U},q_0).
\end{aligned}\end{equation}
Take $\mathbf{c}=\varphi_{\mathbf{b}}(\mathbf{a})$. Then,
\begin{equation}\begin{aligned}\label{c a def}
\mathbf{c}=(\mathbf{b}-\mathbf{B}\mathbf{\mathbf{a}})(1-\mathbf{b}^*\mathbf{a})^{-1}\qquad{\rm and}\qquad\mathbf{a}=\varphi_{\mathbf{b}}(\mathbf{c})=(\mathbf{b}-\mathbf{B}\mathbf{\mathbf{c}})(1-\mathbf{b}^*\mathbf{c})^{-1}.
\end{aligned}\end{equation}
where $\mathbf{B}$ is the matrix in \eqref{hermitian} corresponding to the point $\mathbf{b}\in B^{4n}$.
By taking quaternionic conjugate in the first identity in \eqref{c a def}, we get
\begin{equation}\begin{aligned}\label{def of c}
(1-\mathbf{a}^*\mathbf{b})\mathbf{c}^*&=\mathbf{b}^*-\mathbf{a}^*\mathbf{B}.
\end{aligned}\end{equation}

 To show \eqref{abc}, note that
\begin{equation}\begin{aligned}\label{gagbgc}
 g_{\mathbf{a}}\cdot  g_{\mathbf{b}}\cdot  g_{\mathbf{c}}&=\begin{pmatrix} 1 & -\mathbf{a}^* \\ \mathbf{a} & -\mathbf{A} \end{pmatrix}\begin{pmatrix} 1 & -\mathbf{b}^* \\ \mathbf{b} & -\mathbf{B} \end{pmatrix}\begin{pmatrix} 1 & -\mathbf{c}^* \\ \mathbf{c} & -\mathbf{C} \end{pmatrix}\\
&=\begin{pmatrix} 1-\mathbf{a}^*\mathbf{b} & -\mathbf{b}^*+\mathbf{a}^*\mathbf{B} \\ \mathbf{a}-\mathbf{A}\mathbf{b} & -\mathbf{a}\mathbf{b}^*+\mathbf{A}\mathbf{B} \end{pmatrix}\begin{pmatrix} 1 & -\mathbf{c}^* \\ \mathbf{c} & -\mathbf{C} \end{pmatrix}\\
&=\begin{pmatrix} 1-\mathbf{a}^*\mathbf{b}-(\mathbf{b}^*-\mathbf{a}^*\mathbf{B})\mathbf{c} & \mathfrak{e} \\ \mathfrak{d} & -(\mathbf{a}-\mathbf{A}\mathbf{b})\mathbf{c}^*+(\mathbf{a}\mathbf{b}^*-\mathbf{A}\mathbf{B})\mathbf{C} \end{pmatrix},
\end{aligned}\end{equation}
where
\begin{equation*}\begin{aligned}
\mathfrak{e}&=-(1-\mathbf{a}^*\mathbf{b})\mathbf{c}^*+(\mathbf{b}^*-\mathbf{a}^*\mathbf{B})\mathbf{C}=-(1-\mathbf{a}^*\mathbf{b})\mathbf{c}^*+(1-\mathbf{a}^*\mathbf{b})\mathbf{c}^*\mathbf{C}^*=\mathbf{0},
\end{aligned}\end{equation*}
by using \eqref{def of c} and $\mathbf{C}\mathbf{c}=\mathbf{c}$ in \eqref{eigen}, and
\begin{equation*}\begin{aligned}
\mathfrak{d}=\mathbf{a}(1-\mathbf{b}^*\mathbf{c})-\mathbf{A}(\mathbf{b}-\mathbf{B}\mathbf{c})=\mathbf{a}(1-\mathbf{b}^*\mathbf{c})-\mathbf{A}\mathbf{a}(1-\mathbf{b}^*\mathbf{c})=\mathbf{0},
\end{aligned}\end{equation*}
by using the second identity in \eqref{c a def}.
Thus,
\begin{equation*}\begin{aligned}
 g_{\mathbf{a}}\cdot  g_{\mathbf{b}}\cdot  g_{\mathbf{c}}:=\begin{pmatrix} q_0 & 0\\ 0 & \mathbf{V} \end{pmatrix},
\end{aligned}\end{equation*}
with $q_0$ and $\mathbf{V}$ given by the right hand side of \eqref{gagbgc}. So by Lemma \ref{multip}, we get
\begin{equation*}\begin{aligned}
\varphi_{\mathbf{a}}\circ\varphi_{\mathbf{b}}\circ\varphi_{\mathbf{c}}(\mathbf{q})= \mathbf{V}\mathbf{q}q_0^{-1}.
 \end{aligned}\end{equation*}

   On the other hand, for any $\mathbf{d}\in B^{4n}$, $\varphi_{\mathbf{d}}$ maps the sphere $S^{4n-1}$ to itself by Proposition \ref{multip}.
Consequently, $\varphi_{\mathbf{a}}\circ\varphi_{\mathbf{b}}\circ\varphi_{\mathbf{c}}$ also maps the sphere $S^{4n-1}$ to itself. So we must have
\begin{equation*}\begin{aligned}
|\mathbf{V}\mathbf{q}||q_0|^{-1}=1, \qquad{\rm if}\qquad |\mathbf{q}|=1.
\end{aligned}\end{equation*}
Thus, $\mathbf{V}|q_0|^{-1}\in {\rm SO}(4n)$ and
 $q_0|q_0|^{-1}\in {\rm Sp}(1)$. So $\mathbf{V}|q_0|^{-1}\in GL(n,\mathbb{H})\bigcap {\rm SO}(4n)={\rm Sp}(n)$, i.e. \eqref{abc} holds with $(\mathbf{V}|q_0|^{-1},q_0|q_0|^{-1})$.

(2)\; The action of both sides of \eqref{m 2} at $\mathbf{b}$ gives us $\varphi_{\mathbf{a}}(\mathbf{U}_1\mathbf{b}q_1^{*})=\mathbf{0}$. So we take
$\mathbf{b}=\mathbf{U}_1^{*}\mathbf{a}q_1$, i.e. $\mathbf{a}=\mathbf{U}_1\mathbf{b}q_1^{*}$. Then
\begin{equation}\begin{aligned}\label{matrix eq}
g_{\mathbf{a}}\cdot \begin{pmatrix} q_1 & 0\\ 0 & \mathbf{U}_1 \end{pmatrix}\cdot g_{\mathbf{b}} &=\begin{pmatrix} 1 & -\mathbf{a}^* \\ \mathbf{a} & -\mathbf{A} \end{pmatrix}
  \begin{pmatrix} q_1 & 0 \\ 0 & \mathbf{U}_1 \end{pmatrix}
 \begin{pmatrix} 1 & -\mathbf{b}^* \\ \mathbf{b} & -\mathbf{B} \end{pmatrix}\\
 &=\begin{pmatrix}q_ 1 -\mathbf{a}^*\mathbf{U}_1\mathbf{b}&\mathfrak{e}' \\ \mathfrak{d}' &-\mathbf{a}q_1\mathbf{b}^* +\mathbf{A}\mathbf{U}_1\mathbf{B} \end{pmatrix}=: \begin{pmatrix} (1-|\mathbf{a}|^2)q_1 & 0 \\ 0 & \widetilde{\mathbf{U}}_2 \end{pmatrix},
\end{aligned}\end{equation}
where
\begin{equation*}\begin{aligned}
\mathfrak{d}'&=\mathbf{a}q_1-\mathbf{A}\mathbf{U}_1\mathbf{b}=(\mathbf{a}-\mathbf{A}\mathbf{a})q_1=\mathbf{0},\\
\mathfrak{e}'&=-q_1\mathbf{b}^*+\mathbf{a}^*\mathbf{U}_1\mathbf{B}=-q_1\mathbf{b}^*+q_1\mathbf{b}^*\mathbf{U}_1^*\mathbf{U}_1\mathbf{B}=\mathbf{0},
\end{aligned}\end{equation*}
by definition of $\mathbf{b}$ and $\mathbf{B}=\mathbf{B}^{*}$ in \eqref{eigen}. By the same argument as in the proof of (1), we must have $\mathbf{U}_2=\widetilde{\mathbf{U}}_2(1-|\mathbf{a}|^2)^{-1}\in {\rm Sp}(n)$, $q_2=q_1\in {\rm Sp}(1)$. Now \eqref{matrix eq} implies $\varphi_{\mathbf{a}}\circ (\mathbf{U}_1,q_1)\circ\varphi_{\mathbf{b}}=(\mathbf{U}_2,q_2)$. \eqref{m 2} is proved.

(3)\; The composition is closed in G, since for any  $(\mathbf{U}_1,q_1)\circ \varphi_{\mathbf{a}}$ and $(\mathbf{U}_2,q_2)\circ \varphi_{\mathbf{b}}\in G$,
\begin{equation*}\begin{aligned}
(\mathbf{U}_1,q_1)\circ \varphi_{\mathbf{a}} \circ (\mathbf{U}_2,q_2)\circ \varphi_{\mathbf{b}}
&=(\mathbf{U}_1,q_1)\circ (\mathbf{U}_3,q_3) \circ \varphi_{\mathbf{c}}\circ\varphi_{\mathbf{b}}\\
&=(\mathbf{U}_1,q_1)\circ (\mathbf{U}_3,q_3)\circ (\mathbf{U}_4,q_4) \circ \varphi_{\mathbf{d}}\\
&=(\mathbf{U}_1\mathbf{U}_3\mathbf{U}_4,q_1q_3q_4)\circ\varphi_{\mathbf{d}}\in G,\\
\end{aligned}\end{equation*}
for some $\mathbf{c},\mathbf{d}\in B^{4n}$, $\mathbf{U}_3,\mathbf{U}_4\in {\rm Sp}(n)$ and $q_3, q_4\in {\rm Sp}(1)$ by (1) and (2). Moreover, we have
\begin{equation*}\begin{aligned}
((\mathbf{U}_1,q_1)\circ \varphi_{\mathbf{a}})^{-1}= \varphi_{\mathbf{a}}\circ (\mathbf{U}_1^{-1},q_1^{-1})=(\mathbf{U}_2,q_2)\circ\varphi_{\mathbf{b}}
\end{aligned}\end{equation*}
for some $\mathbf{b}\in B^{4n}$ and $(\mathbf{U}_2,q_2)\in {\rm Sp}(n){\rm Sp}(1)$ by (2). Thus $G $ is a group.
\end{proof}

\section{The $\mathcal{QM}$-invariant Laplace operator}
\subsection{Quaternionic M\"{o}bius invariance of the operator $\triangle$}
Let $M_{p\times m}(\mathbb{H})$ be the space of quaternionic $p\times m$-matrices. Let $\tau:M_{p\times m}(\mathbb{H})\rightarrow M_{2p\times 2m}(\mathbb{C})$ given by
\begin{equation}\begin{aligned}\label{tau ab}
\tau(A+\mathbf{j}B)=\left(\begin{matrix}A&-B\\\overline{B}&\;\;\overline{A}
\end{matrix}\right),
\end{aligned}\end{equation}
if we write a $(p\times m)$-matrix quaternionic matrix as $A+\mathbf{j}B$, where $A$ and $B$ are complex $p\times m$-matrices. In particular, in the case of $M_{1\times n}(\mathbb{H})=\mathbb{H}^n$, $\tau$ given by \eqref{tau ab} is exactly \eqref{zaa}.
\begin{lem}\cite[Proposition 2.3]{wang2}\label{tau MN}
(1) $\tau(\mathcal{M}\mathcal{N})=\tau(\mathcal{M})\tau(\mathcal{N})$ for a quaternionic $(p\times m)-$matrix $\mathcal{M}$ and a quaternionic $(m\times l)-$matrix $\mathcal{N}$. In particular, for $\mathbf{q}'=\mathcal{M}\mathbf{q}$ with $\mathbf{q},\mathbf{q}'\in \mathbb{H}^n$ and a quaternionic $(n\times n)-$matrix $\mathcal{M}$, we have
\begin{equation*}\begin{aligned}
\tau(\mathbf{q}')=\tau(\mathcal{M})\tau(\mathbf{q}),
\end{aligned}\end{equation*}
as complex $(2n\times 2)-$matrices.\\
(2)\; Let $I_n$ be the identity $(n\times n)$-matrix. $\mathcal{M}\in {\rm Sp}(n)$ if and only if
\begin{equation}\begin{aligned}\label{def J}
\tau(\mathcal{M})J\tau(\mathcal{M})^t=J,\qquad{\rm where}\qquad J=\left(J_A^B\right):=\left(\begin{matrix}0&I_n\\-I_n&0\end{matrix}\right),
\end{aligned}\end{equation}
where $A$ and $B$ are row and column indices, respectively.\\
(3)\;$\tau(\mathcal{M}^*)=\overline{\tau(\mathcal{M})}^t$.
\end{lem}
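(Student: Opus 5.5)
The plan is to reduce everything to the $2\times2$ complex blocks. Write a quaternionic matrix as $\mathcal{M}=A+\mathbf{j}B$ with $A,B$ complex. The basic commutation rule is $\mathbf{j}z=\bar z\mathbf{j}$ for $z\in\mathbb{C}$, which extends entrywise to $\mathbf{j}C=\overline{C}\mathbf{j}$ for complex matrices $C$. Together with $\mathbf{j}^2=-1$, this is all we need.

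\textbf{Part (1).} Take $\mathcal{M}=A+\mathbf{j}B$ and $\mathcal{N}=C+\mathbf{j}D$ and expand the product:
\begin{equation*}
\mathcal{M}\mathcal{N}=AC+A\mathbf{j}D+\mathbf{j}BC+\mathbf{j}B\mathbf{j}D .
\end{equation*}
Moving each $\mathbf{j}$ to the left gives $A\mathbf{j}D=\mathbf{j}\overline{A}D$ and $\mathbf{j}B\mathbf{j}D=\mathbf{j}^2\overline{B}D=-\overline{B}D$. Hence
\begin{equation*}
\mathcal{M}\mathcal{N}=(AC-\overline{B}D)+\mathbf{j}(\overline{A}D+BC).
\end{equation*}
Next compute the block product
\begin{equation*}
\left(\begin{matrix}A&-B\\\overline{B}&\overline{A}\end{matrix}\right)\left(\begin{matrix}C&-D\\\overline{D}&\overline{C}\end{matrix}\right)
=\left(\begin{matrix}AC-B\overline{D}&-AD-B\overline{C}\\ \overline{B}C+\overline{A}\,\overline{D}&-\overline{B}D+\overline{A}\,\overline{C}\end{matrix}\right).
\end{equation*}
Compare this with $\tau$ of the product above. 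The indices must be tracked carefully, because the convention may really be $A+B\mathbf{j}$ or $A+\mathbf{j}B$. If the blocks do not match on the first attempt, I will recheck the convention against \eqref{zaa}, where $q=z^{0'}+\mathbf{j}z^{n+}$ as in \eqref{def qj}, and adjust the side on which $\mathbf{j}$ sits. The special case $\mathbf{q}'=\mathcal{M}\mathbf{q}$ is then just $m=n$, $l=1$.

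\textbf{Part (3).} Here $\mathcal{M}^*=\overline{\mathcal{M}}^t$ in the quaternionic sense. The conjugate of $\mathbf{j}B$ is $\overline{B}\,\overline{\mathbf{j}}=-\overline{B}\mathbf{j}=-\mathbf{j}B$, so
\begin{equation*}
\mathcal{M}^*=\overline{A}^t-\mathbf{j}B^t .
\end{equation*}
Applying $\tau$ gives the block matrix with first row $(\overline{A}^t,\;B^t)$ and second row $(-\overline{B}^t,\;A^t)$. This is exactly the conjugate transpose of $\tau(\mathcal{M})$, checked block by block.

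\textbf{Part (2).} First, any matrix of the form $\tau(\mathcal{M})$ satisfies $J\overline{X}=XJ$, which is a direct check on the blocks. Conversely, the image of $\tau$ consists exactly of the complex matrices with this property. Using (1) and (3), the condition $\mathcal{M}\mathcal{M}^*=I$ is equivalent to $\tau(\mathcal{M})\overline{\tau(\mathcal{M})}^t=I$. Since $\overline{X}=J^{-1}XJ$, taking the transpose gives
\begin{equation*}
\overline{X}^t=J^tX^t(J^{-1})^t=-JX^tJ^{-1}\cdot(-1)=JX^tJ^{-1}\quad(\text{up to sign bookkeeping}).
\end{equation*}
So unitarity of $X$ becomes $XJX^tJ^{-1}=I$, that is, $XJX^t=J$.

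One direction is straightforward: $\mathcal{M}\in{\rm Sp}(n)$ gives $\tau(\mathcal{M})J\tau(\mathcal{M})^t=J$. For the converse, with $X=\tau(\mathcal{M})$ already in the image of $\tau$, the same identities run backwards. This is because the relation $\overline{X}=J^{-1}XJ$ holds automatically, so unitarity and the symplectic condition are equivalent.

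The main obstacle is purely the bookkeeping: sign conventions for $J$, $J^{-1}=-J$, and the side on which $\mathbf{j}$ sits. I will fix these by testing on $n=1$ with $\mathcal{M}=\mathbf{j}$.
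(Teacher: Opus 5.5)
The paper does not prove this lemma (it is quoted from \cite{wang2}). Your strategy is the standard one: a block computation for (1), entrywise conjugation for (3), and the relation $J\overline{X}=XJ$ to turn unitarity into the symplectic condition in (2). But you leave the decisive comparison in Part (1) undone, and with the formula you wrote it fails.

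With $\tau(A+\mathbf{j}B)=\begin{pmatrix}A&-B\\ \overline{B}&\overline{A}\end{pmatrix}$, the two sides disagree:
\begin{itemize}
\item the upper-left block of $\tau(\mathcal{M}\mathcal{N})$ is $AC-\overline{B}D$, while that of $\tau(\mathcal{M})\tau(\mathcal{N})$ is $AC-B\overline{D}$;
\item the lower-left blocks, $A\overline{D}+\overline{B}\,\overline{C}$ versus $\overline{B}C+\overline{A}\,\overline{D}$, also differ.
\end{itemize}
This is not sign bookkeeping. With that formula, (1) is already false for $1\times1$ matrices. Since $\mathbf{k}=\mathbf{j}(-\mathbf{i})$, you get $\tau(\mathbf{k})=\begin{pmatrix}0&\mathbf{i}\\ \mathbf{i}&0\end{pmatrix}$, whereas $\tau(\mathbf{i})\tau(\mathbf{j})=\begin{pmatrix}0&-\mathbf{i}\\ -\mathbf{i}&0\end{pmatrix}$. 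Statement (2) fails too. Take $\mathcal{M}=\frac{1}{\sqrt2}\begin{pmatrix}1&\mathbf{i}\\ \mathbf{j}&\mathbf{k}\end{pmatrix}$, which lies in ${\rm Sp}(2)$; the $(1,2)$ entry of $\tau(\mathcal{M})J\tau(\mathcal{M})^t$ is $-1\neq 0$. Your planned test with $\mathcal{M}=\mathbf{j}$ cannot expose any of this, because there $B=1$ is real and the two candidate conventions coincide.

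Your suspicion about the convention was right: the problem is \eqref{tau ab} itself, which contradicts \eqref{zaa} and \eqref{def qj}. By \eqref{def qj}, the vector $\mathbf{q}=A+\mathbf{j}B$ has first column $\begin{pmatrix}A\\ B\end{pmatrix}$ under $\tau$, not $\begin{pmatrix}A\\ \overline{B}\end{pmatrix}$. The correct formula is
\[
\tau(A+\mathbf{j}B)=\begin{pmatrix}A&-\overline{B}\\ B&\overline{A}\end{pmatrix},\qquad\text{equivalently}\qquad \tau(A+B\mathbf{j})=\begin{pmatrix}A&-B\\ \overline{B}&\overline{A}\end{pmatrix}.
\]
With it, your own expansion $\mathcal{M}\mathcal{N}=(AC-\overline{B}D)+\mathbf{j}(\overline{A}D+BC)$ matches $\begin{pmatrix}A&-\overline{B}\\ B&\overline{A}\end{pmatrix}\begin{pmatrix}C&-\overline{D}\\ D&\overline{C}\end{pmatrix}$ block by block, which proves (1).

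Part (3) and the identity $J\overline{X}=XJ$ hold under either convention; for (3) the blocks become $\overline{A}^t,\ \overline{B}^t,\ -B^t,\ A^t$. So the rest of your argument then goes through:
\begin{itemize}
\item injectivity of $\tau$ and $\tau(I_n)=I_{2n}$ give $\mathcal{M}\mathcal{M}^*=I_n$ if and only if $X\overline{X}^t=I_{2n}$, where $X=\tau(\mathcal{M})$;
\item $\overline{X}^t=J^tX^t(J^{-1})^t=JX^tJ^{-1}$ turns this into $XJX^t=J$.
\end{itemize}
Your assertion that the image of $\tau$ is exactly $\{X:\ J\overline{X}=XJ\}$ is true but unnecessary, since $\mathcal{M}$ is given as a quaternionic matrix.
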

We adopt the notations of indices $i,j,l\dots\in\{1,2,\dots,n\}$ and $A,B,\dots\in\{1,2,\dots,2n\}$, $A',B',\dots\in \{0',1'\}$. The summation of $A,B,\dots$ are always taken over $1,2,\dots,2n$.
\begin{lem}\label{lem nabla}\cite[Lemma 3.1]{wang3}
For $\nabla_A^{A'}$ and $z_{B}^{B'}$ given by \eqref{nabla}, we have
\begin{equation}\begin{aligned}\label{delta}
\nabla_{A}^{A'}z_{B}^{B'}=\delta_{AB}\delta^{A'B'}.
\end{aligned}\end{equation}

\end{lem}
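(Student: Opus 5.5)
The identity is a direct computation from the definitions. The plan is to reduce it to a single quaternionic coordinate and then check a $4\times4$ table of first derivatives.

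\emph{Reduction to $n=1$.} Both $z_B^{B'}$ and $\nabla_A^{A'}$ are built blockwise from the coordinates of a single component $q_{l-1}$. The coordinates $z_l^{A'}$ and $z_{n+l}^{A'}$ depend only on $x_{4l},\dots,x_{4l+3}$, and the operators $\nabla_l^{A'},\nabla_{n+l}^{A'}$ differentiate only in these variables. So if $A$ and $B$ belong to different blocks, i.e.\ $A,B\notin\{l,n+l\}$ for a common $l$, then $\nabla_A^{A'}z_B^{B'}=0$ trivially. It therefore suffices to treat one block, which amounts to the case $n=1$.

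\emph{The $n=1$ data.} Write $q=x_0+x_1\mathbf{i}+x_2\mathbf{j}+x_3\mathbf{k}$. By \eqref{zaa} the coordinates are
\[
z_1^{0'}=x_0+\mathbf{i}x_1,\quad z_1^{1'}=-x_2-\mathbf{i}x_3,\quad z_2^{0'}=x_2-\mathbf{i}x_3,\quad z_2^{1'}=x_0-\mathbf{i}x_1 .
\]
Reading \eqref{nabla} entrywise in the layout matching \eqref{zaa}, the operators are
\[
\nabla_1^{0'}=\tfrac12(\partial_{x_0}-\mathbf{i}\partial_{x_1}),\quad \nabla_1^{1'}=\tfrac12(-\partial_{x_2}+\mathbf{i}\partial_{x_3}),\quad \nabla_2^{0'}=\tfrac12(\partial_{x_2}+\mathbf{i}\partial_{x_3}),\quad \nabla_2^{1'}=\tfrac12(\partial_{x_0}+\mathbf{i}\partial_{x_1}).
\]

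\emph{The table.} Each operator involves only the pair $(x_0,x_1)$ or only the pair $(x_2,x_3)$, and so does each coordinate. Hence the only possibly nonzero entries pair $\{\nabla_1^{0'},\nabla_2^{1'}\}$ with $\{z_1^{0'},z_2^{1'}\}$, and $\{\nabla_1^{1'},\nabla_2^{0'}\}$ with $\{z_1^{1'},z_2^{0'}\}$. In each of these $2\times2$ subtables the computation is a Wirtinger-type one:
\[
\nabla_1^{0'}z_1^{0'}=\tfrac12(1+1)=1,\qquad \nabla_1^{0'}z_2^{1'}=\tfrac12(1-1)=0,
\]
\[
\nabla_1^{1'}z_1^{1'}=\tfrac12\bigl(1+\mathbf{i}(-\mathbf{i})\bigr)=1,\qquad \nabla_1^{1'}z_2^{0'}=\tfrac12(-1+1)=0,
\]
and similarly $\nabla_2^{0'}z_2^{0'}=\nabla_2^{1'}z_2^{1'}=1$ and $\nabla_2^{0'}z_1^{1'}=\nabla_2^{1'}z_1^{0'}=0$. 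Together these give $\nabla_A^{A'}z_B^{B'}=\delta_{AB}\delta^{A'B'}$.

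\emph{Expected obstacle.} There is no analytic difficulty. The only real point is bookkeeping: reading the index layout of \eqref{nabla} correctly, so that the row/column placement of $\nabla_A^{A'}$ matches that of $z_A^{A'}$ in \eqref{zaa}, and keeping track of the signs of $\partial_{x_2}$ and $\partial_{x_3}$. I would fix this reading first, using the displayed entries $\nabla_1^{0'}=\tfrac12(\partial_{x_0}-\mathbf{i}\partial_{x_1})$ and $\nabla_{n+1}^{1'}\!=\tfrac12(\partial_{x_0}+\mathbf{i}\partial_{x_1})$ as anchors.
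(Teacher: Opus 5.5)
Your proposal is correct. The paper gives no argument of its own beyond citing \cite[Lemma 3.1]{wang3}, and your block reduction together with the eight Wirtinger-type evaluations (all of which check out) is exactly the direct verification that lemma amounts to.

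Your choice to read \eqref{nabla} with $\nabla_A^{A'}$ in the same $(A,A')$ slot as $z_A^{A'}$ in \eqref{zaa} is the right one. The left-hand matrix in that display is misprinted: taken literally it gives $\nabla_2^{0'}=\tfrac12(-\partial_{x_2}+\mathbf{i}\partial_{x_3})$, which violates the identity. Note also that your second anchor, $\nabla_{n+1}^{1'}$, is not literally what the display shows; it is forced by consistency with \eqref{zaa}.
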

By definition $z_A^{A'}$ in \eqref{zaa} and $J$ in \eqref{def J}, we can check directly
\begin{equation}\begin{aligned}\label{z jab}
z_A^{1'}=\sum_B-J_A^B\overline{z_B^{0'}},\qquad\sum_{A,B}z_A^{A'}J_A^Bz_B^{B'}=\varepsilon^{A'B'}|\mathbf{q}|^2,
\end{aligned}\end{equation}
where $\left(\varepsilon^{A'B'}\right)=\left(\begin{matrix}0&1\\-1&0\end{matrix}\right)$ and
\begin{equation}\begin{aligned}\label{z jab2}
\triangle_0=4\sum_{A,B}\nabla_A^{0'}\nabla_{B}^{1'}J_A^B,\qquad  \sum_{A}z_A^{A'}\nabla_A^{A'}|\mathbf{q}|^2=|\mathbf{q}|^2.
\end{aligned}\end{equation}
where $A'=0',1'$. Namely, the Euclidean norm equals to symplectic product of $\left(z_{A}^{0'}\right)$ and $\left(z_{A}^{1'}\right)$ and $\triangle_0$ is symplect product of $\left(\nabla_{A}^{0'}\right)$ and $\left(\nabla_{A}^{1'}\right)$.

\begin{prop}\label{m invariant}
$\triangle$ is $\mathcal{QM}$-invariant, i.e.
\begin{equation*}\begin{aligned}
\triangle(f\circ\psi)(\mathbf{a})=\triangle f(\psi(\mathbf{a})),
\end{aligned}\end{equation*}
for any $\mathcal{QM}$-transformation $\psi$, $f\in C^2(\Omega)$ and $\mathbf{a}\in B^{4n}$.
\end{prop}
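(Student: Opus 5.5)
Since $G$ is generated by the quaternionic unitary transformations $(\mathbf{U},q_0)$ and the involutions $\varphi_{\mathbf{b}}$, it is enough to verify the identity separately for $\psi$ a unitary transformation and for $\psi=\varphi_{\mathbf{b}}$. If $\triangle(f\circ\psi_i)=(\triangle f)\circ\psi_i$ holds for $i=1,2$, then applying it twice gives $\triangle(f\circ\psi_1\circ\psi_2)=(\triangle(f\circ\psi_1))\circ\psi_2=(\triangle f)\circ\psi_1\circ\psi_2$. So invariance passes to compositions.

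The central step is a rigidity statement, which I would prove first. For any $\psi\in G$ and $\mathbf{a}\in B^{4n}$, set $\mathbf{c}=\psi(\mathbf{a})$. Then
\[
\psi\circ\varphi_{\mathbf{a}}=\varphi_{\mathbf{c}}\circ(\mathbf{V},p)
\]
for some $(\mathbf{V},p)\in{\rm Sp}(n){\rm Sp}(1)$. To see this, observe that $\Phi:=\varphi_{\mathbf{c}}\circ\psi\circ\varphi_{\mathbf{a}}$ lies in $G$ by Theorem \ref{thm mobius}(3) and fixes $\mathbf{0}$. Write $\Phi=(\mathbf{U},q_0)\circ\varphi_{\mathbf{d}}$. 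Then $\Phi(\mathbf{0})=\mathbf{U}\mathbf{d}q_0^*=\mathbf{0}$, since $\varphi_{\mathbf{d}}(\mathbf{0})=\mathbf{d}$. Hence $\mathbf{d}=\mathbf{0}$. Moreover $\varphi_{\mathbf{0}}(\mathbf{q})=-\mathbf{q}$ (because $\mathbf{A}=I_n$ when $\mathbf{a}=\mathbf{0}$), and this map is itself the unitary $(-I_n,1)$. So $\Phi$ is unitary, and since $\varphi_{\mathbf{c}}$ is an involution, the claim follows.

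Granting this, the proof is a short computation:
\[
\triangle(f\circ\psi)(\mathbf{a})=\triangle_0(f\circ\psi\circ\varphi_{\mathbf{a}})(\mathbf{0})=\triangle_0\bigl(f\circ\varphi_{\mathbf{c}}\circ(\mathbf{V},p)\bigr)(\mathbf{0}).
\]
The map $\mathbf{q}\mapsto\mathbf{V}\mathbf{q}p^*$ is a real orthogonal transformation of $\mathbb{R}^{4n}$: by Lemma \ref{tau MN}, or directly from $|\mathbf{V}\mathbf{q}p^*|=|\mathbf{q}|$, it lies in ${\rm O}(4n)$. It also fixes $\mathbf{0}$. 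Since $\triangle_0$ commutes with orthogonal changes of variables, we get
\[
\triangle_0\bigl(f\circ\varphi_{\mathbf{c}}\circ(\mathbf{V},p)\bigr)(\mathbf{0})=\triangle_0(f\circ\varphi_{\mathbf{c}})(\mathbf{0})=(\triangle f)(\mathbf{c})=(\triangle f)(\psi(\mathbf{a})).
\]

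With the rigidity step in hand, no case split is needed at all: the argument applies directly to a general $\psi\in G$. The only points requiring care are the computation $\varphi_{\mathbf{0}}=-{\rm id}$ and keeping track of the order of composition. I do not expect a real obstacle beyond making sure that Theorem \ref{thm mobius} is applied to give closure of $G$ under composition.
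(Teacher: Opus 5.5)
Your proof is correct and is essentially the paper's argument. You conjugate $\psi$ by $\varphi_{\mathbf a}$ and $\varphi_{\psi(\mathbf a)}$ to get an element of $G$ fixing $\mathbf 0$, show that it is a quaternionic unitary transformation, and conclude from the ${\rm Sp}(n){\rm Sp}(1)$-invariance of $\triangle_0$ at the origin. Your remark that $\varphi_{\mathbf 0}(\mathbf q)=-\mathbf q$ is a unitary map is the accurate form of that step; the paper instead writes $\varphi_{\mathbf c}=id$. The reduction to generators in your first paragraph is correct but unused.
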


\begin{proof}
For $\psi\in G$, let $\mathbf{b}=\psi(\mathbf{a})$. Then, $\varphi_{\mathbf{b}}\circ\psi\circ\varphi_{\mathbf{a}}$ is a $\mathcal{QM}$-transformation fixing $\mathbf{0}$, and there exists $(\mathbf{U}',q_0')\in {\rm Sp}(n){\rm Sp}(1)$ and $\mathbf{c}\in B^{4n}$ such that
\begin{equation*}\begin{aligned}
\varphi_{\mathbf{b}}\circ\psi\circ\varphi_{\mathbf{a}}&=\varphi_{\mathbf{c}}\circ (\mathbf{U}',q_0'),
\end{aligned}\end{equation*}
by Theorem \ref{thm mobius} (1)-(2). Then,
\begin{equation*}\begin{aligned}
\mathbf{0}=\varphi_{\mathbf{c}}\circ (\mathbf{U}',q_0')(\mathbf{0})=\varphi_{\mathbf{c}}(\mathbf{0})=\mathbf{c}.
\end{aligned}\end{equation*}
Thus, $\varphi_{\mathbf{c}}=id$ and $\varphi_{\mathbf{b}}\circ\psi\circ\varphi_{\mathbf{a}}= (\mathbf{U}',q_0')\in {\rm Sp}(n){\rm Sp}(1)$.
Hence,
\begin{equation*}\begin{aligned}
\triangle(f\circ\psi)(\mathbf{a})=&\triangle_0 (f\circ\psi\circ\varphi_{\mathbf{a}})(\mathbf{0})=\triangle_0 (f\circ\varphi_{\mathbf{b}}\circ (\mathbf{U}',q_0'))(\mathbf{0})\\
=&\triangle_0 (f\circ\varphi_{\mathbf{b}})(\mathbf{0})=\triangle f(\mathbf{b})=\triangle f(\psi(\mathbf{a})),
\end{aligned}\end{equation*}
 by the invariance of the standard Laplacian $\triangle_0$ under the group Sp$(n)$Sp$(1)\subset{\rm SO}(4n)$.
\end{proof}
\begin{rem}
In \cite{wang4}, it is proved that $k$-Cauchy-Fueter operator is quaternionic projective invariant, which implies it is $\mathcal{QM}$-invariant on $B^{4n}$. The functions annihilated by this operator is $k$-regular functions. These functions on quaternionic hyperbolic space were investigated in \cite{Chang3,CHANG,hr}. So we can define them in the $\mathcal{QM}$-invariant way as in \eqref{def inv op}. It is interesting to use group representation to analyze them as here for $\triangle$.
\end{rem}

\subsection{The explicit expression of $\mathcal{QM}$-invariant Laplacian}
To find the expression of $\triangle$ in Proposition \ref{invariant laplace}, we need to calculate $\triangle_0(f\circ \varphi_{\mathbf{a}})(\mathbf{0})$. In the complex case, the calculation is easy since $\varphi_{\mathbf{a}}$ is holomorphic. Here we need to know first order derivatives of $\nabla_{A}^{A'}\varphi_{\mathbf{a}}(\mathbf{0})$ and $\triangle_0\varphi_{\mathbf{a}}(\mathbf{0})$.
\begin{lem}\label{1st differ}
Let $w(\mathbf{q}):=\tau(\varphi_{\mathbf{a}}(\mathbf{q}))$ as a function of $\mathbf{q}$. Then
\begin{equation}\begin{aligned}\label{rewritten eq 1st de AneqB}
\nabla_{B}^{B'} w(\mathbf{0})=&\left(-s+\frac{s}{1+s}\tau(\mathbf{a}\mathbf{a}^*)\right)E_B^{B'},\\
\triangle_0 w(\mathbf{0})&=4(1-|\mathbf{a}|^2)\tau(\mathbf{a}),
\end{aligned}\end{equation}
where $E_B^{B'}$ be the $(2n\times 2)$-matrix with $1$ only in $(B,B')$-entry and $0$ all other entries.
\end{lem}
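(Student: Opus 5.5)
The plan is to expand $\varphi_{\mathbf{a}}$ in a Taylor series at $\mathbf{q}=\mathbf{0}$ up to second order in quaternionic form, and only then apply $\tau$. Since $\tau$ is $\mathbb{R}$-linear and multiplicative (Lemma \ref{tau MN}), derivatives commute with $\tau$. Only the linear part of $\varphi_{\mathbf{a}}$ contributes to $\nabla_B^{B'}w(\mathbf{0})$, and only the quadratic part contributes to $\triangle_0 w(\mathbf{0})$. For $|\mathbf{q}|$ small, $|\mathbf{a}^*\mathbf{q}|<1$, so the geometric series gives
\begin{equation*}
(1-\mathbf{a}^*\mathbf{q})^{-1}=1+\mathbf{a}^*\mathbf{q}+\mathbf{a}^*\mathbf{q}\,\mathbf{a}^*\mathbf{q}+O(|\mathbf{q}|^3).
\end{equation*}
Multiplying by $\mathbf{a}-\mathbf{A}\mathbf{q}$ on the left then yields
\begin{equation*}
\varphi_{\mathbf{a}}(\mathbf{q})=\mathbf{a}+\mathbf{L}\mathbf{q}+\mathbf{L}\mathbf{q}\,\mathbf{a}^*\mathbf{q}+O(|\mathbf{q}|^3),\qquad \mathbf{L}:=\mathbf{a}\mathbf{a}^*-\mathbf{A}.
\end{equation*}
Using $|\mathbf{a}|^2=1-s^2=(1-s)(1+s)$, we have $(1-s)|\mathbf{a}|^{-2}=(1+s)^{-1}$. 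Hence $\mathbf{L}=\frac{s}{1+s}\mathbf{a}\mathbf{a}^*-sI_n$.

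For the first identity, Lemma \ref{tau MN}(1) gives $\tau(\mathbf{L}\mathbf{q})=\tau(\mathbf{L})\tau(\mathbf{q})$, where $\tau(\mathbf{L})$ is a constant $2n\times 2n$ complex matrix. By Lemma \ref{lem nabla}, $\nabla_B^{B'}\tau(\mathbf{q})=E_B^{B'}$. The quadratic and higher terms vanish to first order at $\mathbf{0}$. Therefore
\begin{equation*}
\nabla_B^{B'}w(\mathbf{0})=\tau(\mathbf{L})E_B^{B'}=\Bigl(-s+\tfrac{s}{1+s}\tau(\mathbf{a}\mathbf{a}^*)\Bigr)E_B^{B'}.
\end{equation*}

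For the second identity, $\triangle_0 w(\mathbf{0})=\tau\bigl(\triangle_0[\mathbf{L}\mathbf{q}\,\mathbf{a}^*\mathbf{q}](\mathbf{0})\bigr)$, because $\triangle_0$ is real and kills the constant and linear parts. The key step is the following identity, valid for any constant quaternion $c$:
\begin{equation*}
\triangle_0^{(\mathbb{R}^4)}(q\,c\,q)=-4\bar c .
\end{equation*}
For $c=1$ this is the computation $\triangle_0(x_0^2-x_1^2-x_2^2-x_3^2+2x_0(x_1\mathbf{i}+x_2\mathbf{j}+x_3\mathbf{k}))=2-6=-4$. For $c=\mathbf{i},\mathbf{j},\mathbf{k}$ it is checked the same way.

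Now write $\mathbf{a}^*\mathbf{q}=\sum_j\bar a_j q_j$. Then the $i$-th component of $\mathbf{L}\mathbf{q}\,\mathbf{a}^*\mathbf{q}$ is $\sum_{k,j}L_{ik}\,q_k\bar a_j q_j$. Since $\triangle_0=\sum_l\triangle_{q_l}$, cross terms with $k\neq j$ are bilinear in distinct variables and are harmonic. The diagonal terms $k=j$ give $-4L_{ij}a_j$, so
\begin{equation*}
\triangle_0(\mathbf{L}\mathbf{q}\,\mathbf{a}^*\mathbf{q})=-4\mathbf{L}\mathbf{a}.
\end{equation*}
By \eqref{eigen}, $\mathbf{L}\mathbf{a}=\frac{s}{1+s}|\mathbf{a}|^2\mathbf{a}-s\mathbf{a}=s\bigl((1-s)-1\bigr)\mathbf{a}=-s^2\mathbf{a}$. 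This gives $\triangle_0 w(\mathbf{0})=4s^2\tau(\mathbf{a})=4(1-|\mathbf{a}|^2)\tau(\mathbf{a})$.

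The main obstacle is this Laplacian computation, since noncommutativity forces the order $q\,c\,q$ to be kept. As a cross-check, I would redo it in the complex coordinates $z_A^{A'}$ using $\triangle_0=4\sum J_A^B\nabla_A^{0'}\nabla_B^{1'}$ from \eqref{z jab2}, together with the identities \eqref{z jab}. In those coordinates $\tau(\mathbf{q})\tau(\mathbf{a}^*)\tau(\mathbf{q})$ has entries that are explicit quadratics in the $z$'s, and the symplectic contraction should reproduce $-4\tau(\mathbf{a})$, matching the quaternionic identity $-4\bar c$ above.
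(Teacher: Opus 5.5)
Your proposal is correct. The first identity is obtained exactly as in the paper: expand $(1-\mathbf{a}^*\mathbf{q})^{-1}$ geometrically, isolate the linear term $\mathbf{L}\mathbf{q}$ with $\mathbf{L}=\frac{s}{1+s}\mathbf{a}\mathbf{a}^*-sI_n$, and use multiplicativity of $\tau$ together with $\nabla_B^{B'}z_A^{A'}=\delta_{AB}\delta^{A'B'}$.

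For the second identity your route is genuinely different. The paper isolates the same quadratic term; its $\Sigma_2$ is your $\tau(\mathbf{L}\mathbf{q}\,\mathbf{a}^*\mathbf{q})=\tau(\mathbf{L})\tau(\mathbf{q})\tau(\mathbf{a}^*)\tau(\mathbf{q})$. It then works entirely in the complex coordinates. It applies $\triangle_0=4\sum_{A,B}J_A^B\nabla_A^{0'}\nabla_B^{1'}$ and contracts indices with the symplectic identities \eqref{z jab}, arriving at $\frac14\triangle_0 w(\mathbf{0})=s\tau(\mathbf{a})-\frac{s}{1+s}|\mathbf{a}|^2\tau(\mathbf{a})=s^2\tau(\mathbf{a})$.

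You instead stay in quaternionic coordinates. You reduce everything to the one-variable identity $\triangle_{\mathbb{R}^4}(q\,c\,q)=-4\bar c$. Rather than checking it basis element by basis element, you can prove it in one line: $\partial_{x_m}^2(qcq)=2e_mce_m$ and $\sum_{m=0}^{3}e_mce_m=-2\bar c$, where $e_0=1$, $e_1=\mathbf{i}$, $e_2=\mathbf{j}$, $e_3=\mathbf{k}$. You then use $\mathbf{A}\mathbf{a}=\mathbf{a}$ to get $\mathbf{L}\mathbf{a}=-s^2\mathbf{a}$.

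Your argument is shorter and free of index bookkeeping. It needs only $\mathbb{R}$-linearity of $\tau$ to pass $\triangle_0$ through. It also shows structurally why the answer is a multiple of $\tau(\mathbf{a})$: it equals $-4\mathbf{L}\mathbf{a}$, and $\mathbf{a}$ is an eigenvector of $\mathbf{L}$.

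The paper's computation has a different advantage. It stays inside the $z_A^{A'}$, $\nabla_A^{A'}$ formalism and reuses the same contraction identities needed right afterwards in Lemma \ref{chain rule} and the proof of Proposition \ref{invariant laplace}. So the lemma's output is already in the form used there.
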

\begin{proof}

Expand $\varphi_{\mathbf{a}}(\mathbf{q})$ in \eqref{def varphib} at $\mathbf{q}=0$ to get
\begin{equation}\begin{aligned}\label{taylor}
\varphi_{\mathbf{a}}(\mathbf{q})
=&\bigg(\mathbf{a}-(1-s)|\mathbf{a}|^{-2}\mathbf{a}\mathbf{a}^*\mathbf{q}-s\mathbf{q}\bigg)\bigg(1+\mathbf{a}^*\mathbf{q}+\left(\mathbf{a}^*\mathbf{q}\right)^2+O(|\mathbf{q}|^3)\bigg).
\end{aligned}\end{equation}
Taking the mapping $\tau$, we get
\begin{equation}\begin{aligned}
w=\left(a_A^{A'}\right)+\Sigma_1+\Sigma_2+O(|z_A^{A'}|^2),
\end{aligned}\end{equation}
where
\begin{equation}\begin{aligned}
\Sigma_1:=&-s\left(z^{A'}_A\right)+\frac{s}{1+s}\tau(\mathbf{a})\tau(\mathbf{a}^*)\left(z^{A'}_A\right),\\
\Sigma_2:=&\frac{s}{1+s}\tau(\mathbf{a})\tau(\mathbf{a}^*)\left(z^{A'}_A\right)\tau(\mathbf{a}^*)\left(z^{A'}_A\right)-s\left(z^{A'}_A\right)\tau(\mathbf{a}^*)\left(z^{A'}_A\right).
\end{aligned}\end{equation}
Then, by $\nabla_B^{B'}\left(z_A^{A'}\right)=E_B^{B'}$, we get first identity in \eqref{rewritten eq 1st de AneqB}, and
\begin{equation}\begin{aligned}
\nabla_B^{B'}\Sigma_2=&\bigg(\frac{s}{1+s}\tau(\mathbf{a})\tau(\mathbf{a}^*)-sI_{2n}\bigg)\bigg(E_B^{B'}\tau(\mathbf{a}^*)\left(z_A^{A'}\right)+\left(z_A^{A'}\right)\tau(\mathbf{a}^*)E_B^{B'}\bigg).\\
\end{aligned}\end{equation}
Then, we get
\begin{equation}\begin{aligned}
\frac14\triangle_0 w(\mathbf{0})=&\sum_{A,B}J_A^B\nabla_A^{0'}\nabla_B^{1'}\Sigma_2\\
=&\sum_{A,B}\bigg(\frac{s}{1+s}\tau(\mathbf{a})\tau(\mathbf{a}^*)-sI_{2n}\bigg)\left(E_B^{1'}\tau(\mathbf{a}^*)E_A^{0'}+E_A^{0'}\tau(\mathbf{a}^*)E_B^{1'}\right)J_A^B\\
=&\sum_{A,B}\left(\frac{s}{1+s}\left(\sum_{A'}a_C^{A'}\overline{a_D^{A'}}\right)-sI_{2n}\right)\bigg(\overline{a_A^{1'}}E_B^{0'}+\overline{a_B^{0'}}E_A^{1'}\bigg)J_A^B\\
=&\frac{s}{1+s}\left(\sum_{A,B,A'}a_C^{A'}\overline{a_B^{A'}}\overline{a_A^{1'}}J_A^B,\mathbf{0}\right)+\frac{s}{1+s}\left(\mathbf{0},\sum_{A,B,A'}a_C^{A'}\overline{a_A^{A'}}\overline{a_B^{0'}}J_A^B\right)\\
&-\sum_{A,B}s\left(\overline{a_A^{1'}}J_A^BE_B^{0'}+\overline{a_B^{0'}}J_A^BE_A^{1'}\right)\\
=&\frac{-s}{1+s}|\mathbf{a}|^2\tau(\mathbf{a})+s\tau(\mathbf{a}),
\end{aligned}\end{equation}
by using \eqref{z jab} repeatedly in the last identity. The lemma is proved.
\end{proof}

\begin{lem}\label{chain rule}
For a smooth function $f$ on $\mathbb{R}^{4n}$ and a smooth transformation $\psi$, we have
\begin{equation}\begin{aligned}
\nabla_{A}^{A'}(f\circ \psi)(\mathbf{q})&=\sum_{B,B'}\left(\nabla_{B}^{B'}f\right)\left(\psi(\mathbf{q})\right)\cdot\nabla_{A}^{A'}w_{B}^{B'}(\mathbf{q}),\\
\end{aligned}\end{equation}
where $\tau(\psi(\mathbf{q}))=(w_{B}^{B'}(\mathbf{q}))$.
\end{lem}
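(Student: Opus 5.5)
The statement to prove is Lemma \ref{chain rule}: a chain rule for the complex vector fields $\nabla_A^{A'}$. My plan is to reduce it to the ordinary real chain rule, using that $\{z_B^{B'}\}$ and their conjugates are complex coordinates for $\mathbb{R}^{4n}$.

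\textbf{Step 1 (change to complex coordinates).} Each real coordinate $x_k$ is a real-linear combination of the $z_B^{B'}$ alone. Indeed, by \eqref{zaa}, $z_{n+j}^{1'}=\overline{z_j^{0'}}$ and $z_j^{1'}=-\overline{z_{n+j}^{0'}}$. So $x_{4l}$ and $x_{4l+1}$ are the real and imaginary parts of $z_l^{0'}$, and both are expressible through $z_l^{0'}$ and $z_{n+l}^{1'}$; similarly for $x_{4l+2}, x_{4l+3}$. Consequently every real first-order operator can be rewritten in terms of the $2n\times 2$ complex derivations dual to the $z_B^{B'}$. By Lemma \ref{lem nabla}, $\nabla_A^{A'}z_B^{B'}=\delta_{AB}\delta^{A'B'}$. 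Since the $4n$ complex-valued functions $z_B^{B'}$ are $\mathbb{C}$-linearly independent linear coordinates, they are a coordinate system on $\mathbb{C}\otimes T^*\mathbb{R}^{4n}$. It follows that $\{\nabla_B^{B'}\}$ is exactly the dual frame to $\{dz_B^{B'}\}$.

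\textbf{Step 2 (the resulting identity).} From Step 1, for any smooth $f$ we have
\begin{equation*}
df=\sum_{B,B'}\left(\nabla_B^{B'}f\right)dz_B^{B'}.
\end{equation*}
Here a differential $dg$ is paired with a vector field $X$ in the usual way, $dg(X)=Xg$. Checking this identity against each $\nabla_A^{A'}$, using Lemma \ref{lem nabla}, confirms it.

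\textbf{Step 3 (pull back along $\psi$).} Pulling back by $\psi$ gives $d(f\circ\psi)=\psi^*df$. Moreover $\psi^*dz_B^{B'}=d(z_B^{B'}\circ\psi)=dw_B^{B'}$, where $\tau(\psi(\mathbf{q}))=(w_B^{B'}(\mathbf{q}))$. Hence
\begin{equation*}
d(f\circ\psi)(\mathbf{q})=\sum_{B,B'}\left(\nabla_B^{B'}f\right)(\psi(\mathbf{q}))\,dw_B^{B'}(\mathbf{q}).
\end{equation*}
Evaluating this on the vector field $\nabla_A^{A'}$ at $\mathbf{q}$ yields the claimed formula.

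\textbf{Main obstacle.} The only delicate point is Step 1: verifying that no conjugate variables are needed. In other words, we must confirm that the span of $\{z_B^{B'}\}$ already equals the full complexified cotangent space. This holds by the dimension count: there are $4n$ coordinates $z_B^{B'}$, each a linear function, and they are $\mathbb{C}$-linearly independent because $\nabla_A^{A'}z_B^{B'}=\delta_{AB}\delta^{A'B'}$. Once this is established, the rest is formal.
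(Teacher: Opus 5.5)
Your argument is correct, but it takes a different route from the paper's. The paper first reduces to real-analytic $f$ and $\psi$ and extends them holomorphically to $\mathbb{C}^{4n}$ via \eqref{holo extend}. It then uses \eqref{leb}, $\nabla_A^{A'}[\underline{F}(\tau(\mathbf{q}))]=\partial_A^{A'}\underline{F}(\tau(\mathbf{q}))$, to turn $\nabla_A^{A'}$ into genuine holomorphic partial derivatives. Finally it applies the holomorphic chain rule and restricts back to the totally real subspace $\tau(\mathbb{H}^n)$.

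You instead work pointwise in the complexified cotangent space. Lemma \ref{lem nabla} plus a dimension count shows that $\{\nabla_B^{B'}\}$ is the frame dual to $\{dz_B^{B'}\}$, so $df=\sum(\nabla_B^{B'}f)\,dz_B^{B'}$. Naturality of pullback, $\psi^*dz_B^{B'}=dw_B^{B'}$, then finishes the proof.

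Both arguments rest on the same fact: the real coordinates are $\mathbb{C}$-linear in the $z_B^{B'}$, with no conjugates needed, which is exactly why \eqref{holo extend} is holomorphic. In that sense yours is the infinitesimal version of the paper's.

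What your version buys is that it applies directly to $C^1$ maps. It needs no analytic continuation, and it avoids the approximation step hidden in the paper's ``it is sufficient to check the formula for $f$ and $\psi$ real analytic.'' What the paper's version buys is consistency with the holomorphic-extension picture it reuses for the vector fields $\pi(M)$, $\pi(N)$ in Lemma \ref{lemma lie}.

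One small slip: in Step 1 the $x_k$ are complex-linear, not real-linear, combinations of the $z_B^{B'}$; for example, $x_1=(z_1^{0'}-z_{n+1}^{1'})/(2\mathbf{i})$. This is harmless, because your independence argument via Lemma \ref{lem nabla} already gives the spanning property on its own.
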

\begin{proof}
It is sufficient to check the formula for $f$ and $\psi$ real analytic. Note that a real analytic function $F(x_0,\dots,x_{4n-1})$ on $\mathbb{R}^{4n}$ can be extended to a holomorphic function on $\mathbb{C}^{4n}$ by
\begin{equation}\begin{aligned}\label{holo extend}
\underline{F}(z)=F\left(\dots,\frac{z_{l}^{0'}+z_{n+l}^{1'}}{2},\frac{z_{l}^{0'}-z_{n+l}^{1'}}{2\mathbf{i}},\frac{z_{n+l}^{0'}-z_{l}^{1'}}{2},\frac{z_{n+l}^{0'}+z_{l}^{1'}}{-2\mathbf{i}},\dots\right),
\end{aligned}\end{equation}
which satisfies $F(\mathbf{q})=\underline{F}(\tau(\mathbf{q}))$ by the embedding $\tau:\mathbb{H}^n\rightarrow \mathbb{C}^{4n}$ in \eqref{zaa}.
It is easy to see that \cite[(3.9)]{wang4}
\begin{equation}\begin{aligned}\label{leb}
\nabla_A^{A'}[\underline{F}(\tau(\mathbf{q}))]=\partial_A^{A'}\underline{F}\left(\tau(\mathbf{q})\right),
\end{aligned}\end{equation}
where $\partial_A^{A'}=\frac{\partial}{\partial z_A^{A'}}$ is holomorphic derivative with respect to $z_A^{A'}$. Similarly, $\psi$ can be extended to local holomorphic transformations $\Psi=(\Psi_B^{B'}):\mathbb{C}^{4n}\rightarrow\mathbb{C}^{4n}$. Then $f\circ \psi(\mathbf{q})=\underline{F}\circ\underline{\Psi}(\tau(\mathbf{q}))$.

By the chain rule for holomorphic functions under holomorphic transformation, we have
\begin{equation*}\begin{aligned}
\partial_{A}^{A'}(\underline{F}\circ \Psi)(z)=\sum_{B,B'}(\partial_{B}^{B'}\underline{F})\circ\Psi(z)\cdot\partial_{A}^{A'}\Psi_{B}^{B'}(z).
\end{aligned}\end{equation*}
If restricting to the total real subspace $\tau(\mathbb{H}^n)\in \mathbb{C}^{4n}$ and using \eqref{leb}, we get the result.
\end{proof}

\begin{proof}[Proof of Proposition \ref{invariant laplace}]
Note that $\nabla_{B}^{B'}w_{A}^{A'}(\mathbf{0})=0$ for $A'\neq B'$, because
\begin{equation}\begin{aligned}\label{1st differ w}
\nabla_{B}^{B'} w_{A}^{A'}(\mathbf{0})=&-s\delta_{BA}^{B'A'}+\frac{s}{1+s}\sum_{D'}a_A^{D'}\overline{a_B^{D'}}\delta^{B'A'},\\
\end{aligned}\end{equation}
by the first identity of \eqref{rewritten eq 1st de AneqB} and $(C,D)$-entry of $\tau(\mathbf{a})\tau(\mathbf{a}^*)$ is $\sum_{D'}a_C^{D'}\overline{a_D^{D'}}$.

By using the chain rule in Lemma \ref{chain rule} repeatedly, we get
\begin{equation}\begin{aligned}\label{**}
\triangle(f\circ\varphi_{\mathbf{a}})(\mathbf{0})=&4\sum_{A,B}J_A^B\nabla_{A}^{0'}\nabla_{B}^{1'}(f\circ \varphi_{\mathbf{a}})(\mathbf{0})\\
=&4\sum_{A,B,C}J_A^B\nabla_{A}^{0'}\bigg[\sum_C(\nabla_{C}^{0'}f)\circ\varphi_{\mathbf{a}}\cdot\nabla_{B}^{1'} w_{C}^{0'}+(\nabla_{C}^{1'} f)\circ\varphi_{\mathbf{a}}\cdot\nabla_{B}^{1'}w_{C}^{1'}\bigg](\mathbf{0})\\
=&\sum_{C}\bigg(\nabla_{C}^{0'}f(\mathbf{a})\cdot\triangle_0 w_{C}^{0'}(\mathbf{0})+\nabla_{C}^{1'}f(\mathbf{a})\cdot\triangle_0 w_{C}^{1'}(\mathbf{0})\bigg)\\
&+4\sum_{A,B,C,D}J_A^B\nabla_{D}^{0'}\nabla_{C}^{1'}f(\mathbf{a})\cdot\nabla_{A}^{0'} w_{D}^{0'}(\mathbf{0})\cdot\nabla_{B}^{1'}w_{C}^{1'}(\mathbf{0})=:\widehat{\Sigma}_1+\widehat{\Sigma}_2.
\end{aligned}\end{equation}
Now by the second identity in Lemma \ref{1st differ}, we get $$\widehat{\Sigma}_1=4(1-|\mathbf{a}|^2)(R^{0'}+R^{1'})f(\mathbf{a}).$$
By \eqref{1st differ w}, we get
\begin{equation*}\begin{aligned}
&\sum_{A,B}J_A^B\nabla_{A}^{0'} w_{D}^{0'}(0)\cdot\nabla_{B}^{1'}w_{C}^{1'}(0)\\
=&\sum_{A,B}J_A^B\bigg(-s\delta_{AD}+\frac{s}{1+s}\sum_{A'}a_D^{A'}\overline{a_A^{A'}}\bigg)
\bigg(-s\delta_{BC}+\frac{s}{1+s}\sum_{B'}a_C^{B'}\overline{a_B^{B'}}\bigg) \\
=&s^2J_D^C-\frac{s^2}{1+s}\bigg(\sum_{B,B'}J_D^Ba_C^{B'}\overline{a_B^{B'}}+\sum_{A,A'}J_A^Ca_D^{A'}\overline{a_A^{A'}}\bigg)+\bigg(\frac{s}{1+s}\bigg)^2\sum_{A,B,A',B'}a_D^{A'}a_C^{B'}\overline{a_A^{A'}}\overline{a_B^{B'}}J_A^B\\
=&s^2J_D^C+\bigg(\left(\frac{s}{1+s}\right)^2|\mathbf{a}|^2-\frac{2s^2}{1+s}\bigg)a_{[D}^{0'} a_{C]}^{1'}\\
=&(1-|\mathbf{a}|^2)\left(J_D^C-a_{[D}^{0'} a_{C]}^{1'}\right),
\end{aligned}\end{equation*}
by using \eqref{z jab} repeatedly. Here we use the notation $a_{[A}^{A'}a_{B]}^{B'}:=a_{A}^{A'}a_{B}^{B'}-a_{B}^{A'}a_{A}^{B'}$.
Thus, we get
\begin{equation*}\begin{aligned}
\widehat{\Sigma}_2
=4(1-|\mathbf{a}|^2)\sum_{C,D=1}^{2n}\left(J_D^C-a_{[D}^{0'}a_{C]}^{1'}\right)\nabla_{D}^{0'}\nabla_{C}^{1'} f=4(1-|\mathbf{a}|^2)\left( \frac14\triangle_0 + \mathfrak{D} -R^{0'}R^{1'} \right)f.\\
\end{aligned}\end{equation*}
The sum of $\widehat{\Sigma}_1$ and $\widehat{\Sigma}_2$ gives us \eqref{ex form}. The proposition is solved.
\end{proof}

\begin{rem}
If we take $z_k=z_k^{0'}$ and $\bar{z}_k=z_k^{1'}$ for $k=1,2,\dots,2n$, \eqref{ex form} can be rewritten as
\begin{equation*}\begin{aligned}
\triangle=&4(1-|z|^2)\bigg[\sum_{i,j=1}^{n}\bigg((\delta_{ij}-z_i\bar{z}_j-\bar{z}_{n+i}z_{n+j})\partial_{z_i}\partial_{\bar{z}_j}+(\bar{z}_iz_{n+j}-z_{n+i}\bar{z}_{j})\partial_{z_{n+i}}\partial_{\bar{z}_j}\\
&+(\bar{z}_{n+i}z_j-z_i\bar{z}_{n+j})\partial_{z_i}\partial_{\bar{z}_{n+j}}+(\delta_{ij}-\bar{z}_iz_j-z_{n+i}\bar{z}_{n+j})\partial_{z_{n+i}}\partial_{\bar{z}_{n+j}}\bigg)+\bigg(\sum_{i=1}^{2n}z_i\partial_{z_i}+\bar{z}_i\partial_{\bar{z}_i}\bigg)\bigg],
\end{aligned}\end{equation*}
which concides with the expression of Beltrami-Laplacian with respect to the quaternionic hyperbolic metric in \cite{Lu}.

\end{rem}

\subsection{Casimir operators of {\rm Sp}($n$) and {\rm Sp}($1$)}
To prove Theorem \ref{no boundary decomp}, we need to know the action of $\triangle$ on the Sp$(n)$Sp$(1)$-modules $\mathcal{H}_{a,b}$. For this purpose, we express $\triangle$ as the sum of Casimir operators and radial operators. A complex representation of a Lie algebra $\mathfrak{g}$ is a homomorphism $\phi:\mathfrak{g}\rightarrow \mathfrak{gl}(V)$, where $V$ is a vector space over $\mathbb{C}$.
Let $\pi$ be the representation of $\mathfrak{sp}(2n,\mathbb{C})$ and $\mathfrak{sl}(2,\mathbb{C})$ on $\mathbb{C}^{2n\times 2}$ as holomorphic vector fields given by
\begin{equation}\begin{aligned}\label{representation def}
\pi(M)f(\mathbf{z})&=\left.\frac{d}{ds}f\left(e^{s\overline{M}^t}\mathbf{z} \right)\right|_{s=0}, \qquad \pi(N)f(\mathbf{z})&=\left.\frac{d}{ds}f\left(\mathbf{z}e^{sN} \right)\right|_{s=0},
\end{aligned}\end{equation}
where $M=(M_{AB})\in \mathfrak{sp}(2n,\mathbb{C})$ and $N=(N_{A'B'})\in \mathfrak{sl}(2,\mathbb{C})$, respectively.

 Note that a polynomial on $\mathbb{R}^{4n}$ can be extended to a holomorphic polynomial on $\mathbb{C}^{2n\times 2}$ by \eqref{holo extend}, and conversely, a holomorphic polynomial restricted to $\tau(\mathbb{H})$ gives us a polynomial on $\mathbb{R}^{4n}$. Moreover, ${\rm Sp}(n){\rm Sp}(1)$ preserve $\tau(\mathbb{H})$ by Lemma \ref{tau MN} (2). For $X\in\mathfrak{sp}(n)$ and $X'\in\mathfrak{sp}(1)$, $\tau(X^*)=\overline{\tau(X)}^t$ by Lemma \ref{tau MN} (3) and \eqref{z jab}. Thus, \eqref{representation def} give us the vector fields on $\mathbb{R}^{4n}$ representing $\mathfrak{sp}(n)$ and $\mathfrak{sp}(1):$
\begin{equation}\begin{aligned}\label{representation defd}
\pi(X,X')f(\mathbf{z})&=\left.\frac{d}{ds}f\left(e^{s\tau(X^{*})}\mathbf{z}e^{sX'} \right)\right|_{s=0}.
\end{aligned}\end{equation}

\begin{lem}\label{lemma lie}
For $M=(M_{AB})\in \mathfrak{sp}(2n,\mathbb{C})$ and $N=(N_{A'B'})\in \mathfrak{sl}(2,\mathbb{C})$,
\begin{equation}\begin{aligned}\label{lie}
\pi(M)=\sum_{A,B,A'}\overline{M}_{BA}z_B^{A'}\nabla_A^{A'},\qquad\pi(N)=\sum_{A,A',B'}z_A^{B'}N_{B'A'}\nabla_A^{A'}.
\end{aligned}\end{equation}
\end{lem}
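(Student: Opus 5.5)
The plan is to compute both vector fields directly from the definition \eqref{representation def} by the chain rule, and then rewrite the result in the $\nabla_A^{A'}$-notation using Lemma \ref{chain rule} and Lemma \ref{lem nabla}.

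\textbf{Holomorphic computation.} We first work with a holomorphic polynomial $f$ on $\mathbb{C}^{2n\times 2}$ in the coordinates $z_A^{A'}$. For $M\in\mathfrak{sp}(2n,\mathbb{C})$, the curve $s\mapsto e^{s\overline{M}^t}\mathbf{z}$ has velocity $\overline{M}^t\mathbf{z}$ at $s=0$. Its $(A,A')$-entry is
\[
\sum_B (\overline{M}^t)_{AB}z_B^{A'}=\sum_B\overline{M}_{BA}z_B^{A'}.
\]
The holomorphic chain rule then gives
\[
\pi(M)f=\sum_{A,B,A'}\overline{M}_{BA}\,z_B^{A'}\,\partial_A^{A'}f .
\]
Similarly, the curve $s\mapsto\mathbf{z}e^{sN}$ has velocity $\mathbf{z}N$, whose $(A,A')$-entry is $\sum_{B'}z_A^{B'}N_{B'A'}$. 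Hence
\[
\pi(N)f=\sum_{A,A',B'}z_A^{B'}N_{B'A'}\,\partial_A^{A'}f .
\]

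\textbf{Passing to $\nabla_A^{A'}$.} By \eqref{leb}, i.e. the identity $\nabla_A^{A'}[\underline F(\tau(\mathbf q))]=\partial_A^{A'}\underline F(\tau(\mathbf q))$ used in the proof of Lemma \ref{chain rule}, holomorphic derivatives restricted to the totally real subspace $\tau(\mathbb{H}^n)$ agree with $\nabla_A^{A'}$ applied to the restricted function. So, restricting to $\tau(\mathbb{H}^n)$ via the extension \eqref{holo extend}, $\partial_A^{A'}$ may be replaced by $\nabla_A^{A'}$, which gives \eqref{lie}. Polynomials are dense, or one can argue for real analytic functions exactly as in Lemma \ref{chain rule}, so the identity of first-order operators holds on all smooth functions.

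The only delicate point is the meaning of \eqref{lie} as real vector fields on $\mathbb{R}^{4n}$, since $e^{s\overline M^t}$ need not preserve $\tau(\mathbb{H}^n)$ for a general complex $M$. I would handle this in two steps. For $M$ coming from $\mathfrak{sp}(n)$, the curve stays in $\tau(\mathbb{H}^n)$ by Lemma \ref{tau MN}(2),(3), so \eqref{lie} is a genuine identity of vector fields on $\mathbb{R}^{4n}$. For general complex $M$, both sides of \eqref{lie} are complex-linear in $M$ (in $\overline M$, consistently on both sides), so the formula extends to the complexification as the definition of $\pi$. The same argument applies to $N$.

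I expect no real obstacle. The main care needed is the index bookkeeping, namely the transpose giving $\overline M_{BA}$ rather than $\overline M_{AB}$, and the right multiplication giving $N_{B'A'}$.
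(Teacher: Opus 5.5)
Your proposal is correct and follows the paper's proof: you differentiate $f(e^{s\overline M^t}\mathbf z)$ and $f(\mathbf z e^{sN})$ at $s=0$ by the holomorphic chain rule to get the coefficients $\overline M_{BA}z_B^{A'}$ and $z_A^{B'}N_{B'A'}$ in front of $\partial_A^{A'}$, and then identify $\partial_A^{A'}$ with $\nabla_A^{A'}$ on $\tau(\mathbb{H}^n)$ via \eqref{leb}, a step the paper leaves implicit. Your extra two-step discussion of general complex $M$ is harmless but not needed, because \eqref{leb} applies directly to the holomorphic polynomial $\pi(M)\underline F$ for any complex $M$, whether or not the curve stays in $\tau(\mathbb{H}^n)$.
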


\begin{proof}
For a holomorphic polynomial $f$ on $\mathbb{C}^{2n}$,
\begin{equation*}\begin{aligned}
\pi(M)f(\mathbf{z})&=\left.\frac{d}{ds}\right|_{s=0}f(\mathbf{z}+s\overline{M}^t\mathbf{z}+O(s^2))=\sum_{A,B,A'}(\overline{M}^t)_{AB}z_B^{A'}\partial_A^{A'}f(\mathbf{z}),\\
\pi(N)f(\mathbf{z})&=\left.\frac{d}{ds}\right|_{s=0}f(\mathbf{z}+s\mathbf{z}N+O(s^2))=\sum_{A,A',B'}z_A^{B'}N_{B'A'}\partial_A^{A'}f(\mathbf{z}),\\
\end{aligned}\end{equation*}
by definition \eqref{representation def}. The lemma is proved.
\end{proof}

\begin{proof}[Proof of Theorem A]
See \cite[Proposition 2.1]{Co} for the decomposition \eqref{L2 decomp}.

  By Lemma \ref{lemma lie}, we see that vector fields for basis of $\mathfrak{sp}(2n,\mathbb{C})$ in \eqref{spn basis} are given by
\begin{equation}\begin{aligned}\label{Xij Yij Ui}
\pi(H_{i,i})&= \sum_{A'} (z_{i}^{A'} \nabla_{i}^{A'} - z_{n+i}^{A'} \nabla_{n+i}^{A'}),\\
\pi(X_{i,j})&=\sum_{A'} (z_{i}^{A'} \nabla_{j}^{A'} - z_{n+j}^{A'} \nabla_{n+i}^{A'}),\qquad\pi(X_{i,j}^-)= \sum_{A'} (z_{j}^{A'} \nabla_{i}^{A'} - z_{n+i}^{A'} \nabla_{n+j}^{A'}),\\
\pi(Y_{i,j})&= \sum_{A'} (z_{i}^{A'} \nabla_{n+j}^{A'}+z_{j}^{A'} \nabla_{n+i}^{A'}),\qquad\pi(Y_{i,j}^{-})= \sum_{A'} (z_{n+i}^{A'} \nabla_{j}^{A'}+z_{n+j}^{A'} \nabla_{i}^{A'}),\\
\pi(U_i)&=\sum_{A'}  z_{i}^{A'} \nabla_{n+i}^{A'},\qquad\qquad\qquad\quad\pi(U_i^-) = \sum_{A'} z_{n+i}^{A'} \nabla_{i}^{A'},
\end{aligned}\end{equation}
where $1\leq i<j\leq n$, while the vector fields for basis of $\mathfrak{sp}(2,\mathbb{C})$ in \eqref{sp1 basis} are given by
\begin{equation}\begin{aligned}\label{u H}
\pi(U) =&\sum_{A} z_{A}^{0'} \nabla_{A}^{1'},\qquad\pi(U^{-}) = \sum_{B}  z_{B}^{1'} \nabla_{B}^{0'},\\
\pi(H) =& \sum_{A}  z_{A}^{0'} \nabla_{A}^{0'} - \sum_{A}z_{A}^{1'} \nabla_{A}^{1'}:=R^{0'}-R^{1'}.
\end{aligned}\end{equation}

  Since $p_{a,b}$ in \eqref{pab} is independent of $z_A^{A'}$ for $A\geq 3$, we see that for $2\leq i<j$, $1\leq i'$,
\begin{equation}\begin{aligned}\label{pi pab}
\pi(X_{i,j})p_{a,b}=\pi(Y_{i,j})p_{a,b},\quad\pi(U_{i'})p_{a,b}=0,
\end{aligned}\end{equation}
by the Leibniz law. For $i=1$, $j=2$, we have $\pi(Y_{1,2})p_{a,b}=0$ by the same reason, and
\begin{equation}\begin{aligned}\label{3.24}
\pi(X_{1,2})p_{a,b}
=(z_1^{0'})^{a-b}(z_1^{0'}z_2^{1'}-z_1^{1'}z_2^{0'})^{b-1}(z_1^{0'}z_1^{1'}-z_1^{1'}z_1^{0'})=0.
\end{aligned}\end{equation}
Similarly,
\begin{equation}\begin{aligned}
R^{0'}p_{a,b}&=ap_{a,b}\qquad R^{1'}p_{a,b}&=bp_{a,b},\qquad\pi(H_{i,i})p_{a,b}=0,\quad for \quad i\geq 3,
\end{aligned}\end{equation}
and
\begin{equation}\begin{aligned}\label{piH}
\pi(U)p_{a,b}=(z_1^{0'}\nabla_1^{1'}+z_2^{0'}\nabla_2^{1'})p_{a,b}=0,\qquad\pi(H)p_{a,b}=(R^{0'}-R^{1'})p_{a,b}=(a-b)p_{a,b}.
\end{aligned}\end{equation}
We also have
\begin{equation}\begin{aligned}\label{pi H11H22}
\pi(H_{1,1})p_{a,b}=&(z_{1}^{0'} \nabla_{1}^{0'} + z_{1}^{1'} \nabla_{1}^{1'})\left[(z_1^{0'})^{a-b}(-z_1^{0'}z_2^{1'}+z_2^{0'}z_1^{1'})^{b}\right]=ap_{a,b},\\
\pi(H_{2,2})p_{a,b}=&(z_1^{0'})^{a-b}(z_2^{0'} \nabla_{2}^{0'}+z_2^{1'} \nabla_{2}^{1'})(-z_1^{0'}z_2^{1'}+z_2^{0'}z_1^{1'})^{b}=bp_{a,b}.
\end{aligned}\end{equation}
Thus, $p_{a,b}$ is annihilated by $\mathfrak{g}_{+}$ and $\mathfrak{g}_{+}'$ and is the eigenvector of $\mathfrak{h}$ and $\mathfrak{h}'$. So it is a highest weight vector.
\end{proof}

For a $n$-dimensional semisimple Lie algebra $\mathfrak{g}$, its \emph{Casimir element} $\Omega$ is defined as
\begin{equation}\begin{aligned}
\Omega:=\sum_{i=1}^m X_iX_i^*,
\end{aligned}\end{equation}
where $\{X_i\}^m_{i=1}$ is a basis of $\mathfrak{g}$, and $X_i^*$ is the dual basis of $X_i$ with respect to an invariant form $B$. $\Omega$ is unique up to a factor. It is known that $\Omega$ is $\mathfrak{g}$ invariant. i.e. $[\Omega,X]=0$ for any $X\in \mathfrak{g}$ \cite[Section 6.2]{humphrey}.

It is convenient to use invariant form $\langle A,B\rangle=\frac12tr(AB)$ in our case.
Then the dual basis of $\mathfrak{sp}(2n,\mathbb{C})$ in \eqref{spn basis} are given by
\begin{equation*}\begin{aligned}
X_{i,j}^*&=X_{i,j}^-,\qquad Y_{i,j}^*=Y_{i,j}^{-},\qquad {U_i}^*=2U_i^-,\qquad H_{i,i}^*=H_{i,i},\\
{X_{i,j}^{-}}^*&=X_{i,j},\qquad{Y_{i,j}^{-}}^*=Y_{i,j},\quad\quad{U_i^-}^*=2U_i.
\end{aligned}\end{equation*}
The dual basis of $\mathfrak{sp}(2,\mathbb{C})$ are given by $U^*=2U^{-}$, ${U^{-}}^*=2U$, $H^*=H$.

By definition, the Casimir elements of $\mathfrak{sp}(2n,\mathbb{C})$ and $\mathfrak{sp}(2,\mathbb{C})$ are given by
\begin{equation}\begin{aligned}\label{casimir element}
\Omega^L=&
\sum_{i=1}^n H_{i,i}^2+\sum_{i<j}^n\bigl(X_{i,j}X_{i,j}^-+X_{i,j}^-X_{i,j}
+Y_{i,j}Y_{i,j}^{-} +Y_{i,j}^{-}Y_{i,j} \bigr)+ 2\sum_{i=1}^n (U_iU_i^{-}+U_i^{-}U_i),\\
\Omega^R=&  H^2 + 2(UU^{-} + U^{-}U),
\end{aligned}\end{equation}
respectively, which can be simplified as
\begin{equation}\begin{aligned}\label{casimir element 1}
\Omega^L=&
\sum_{i=1}^n H_{i,i}^2+\sum_{i\neq j}^n\bigl(X_{i,j}^-X_{i,j}
+ Y_{i,j}^{-}Y_{i,j} \bigr)+ \sum_{i=1}^n \left(4U_i^{-}U_i+(n+1)H_{i,i}\right),\\
\Omega^R=&  H^2  + 4U^{-}U+2H,
\end{aligned}\end{equation}
by $X_{i,j}^-=X_{j,i}$, $Y_{i,j}=Y_{j,i}$, $Y_{i,j}^-=Y_{j,i}^-$, for $i<j$ and
\begin{equation}\begin{aligned}
\left[U_i,U_i^{-}\right]&=H_{i,i},\qquad \left[U,U^{-}\right]=H,\\
\frac{1}{2}\sum_{i\neq j}[Y_{i,j},Y_{i,j}^-]&=\frac{1}{2}\sum_{i\neq j}(H_{i,i}+H_{j,j})=(n-1)\sum_{i}H_{i,i}.
\end{aligned}\end{equation}
The {\it Casimir operators} are $\triangle^L:=\pi(\Omega^L)$, $\triangle^R:=\pi(\Omega^R)$.

\begin{prop}\label{Ap 2}
The Casimir operators of ${\rm Sp}(n)$ and Sp$(1)$ are given by
\begin{equation}\begin{aligned}\label{omegaLex}
\triangle^L&=-\frac{|\mathbf{q}|^2}{2}\triangle_0+2\mathfrak{D}+(R^{0'})^2+(R^{1'})^2+2n\bigl(R^{0'}+R^{1'}\bigr),\\
\triangle^R&=4 \mathfrak{D}+(R^{0'}-R^{1'})^2+2(R^{0'}+R^{1'}),
\end{aligned}\end{equation}
respectively, where $\mathfrak{D}$ is given by \eqref{def D R}.
\end{prop}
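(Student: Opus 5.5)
Two ingredients do all the work: the vector fields of \eqref{Xij Yij Ui} and \eqref{u H} substituted into the simplified Casimir elements \eqref{casimir element 1}, and two commutation rules. The first is Lemma \ref{lem nabla}, $\nabla_A^{A'}z_B^{B'}=\delta_{AB}\delta^{A'B'}$, which lets us normal order every product of first order fields $z\nabla\, z\nabla$ as a second order term $zz\nabla\nabla$ plus a first order term $z\nabla$. The second is the pair of identities \eqref{z jab} and \eqref{z jab2}, which recognise $\triangle_0$, $|\mathbf{q}|^2$, $\mathfrak{D}$ and $R^{A'}$ among the resulting contractions.

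\textbf{Sp$(1)$.} This is the easier case, so I will do it first. From \eqref{u H},
\[
\pi(U^-)\pi(U)=\sum_{A,B}z_B^{1'}\nabla_B^{0'}z_A^{0'}\nabla_A^{1'}
=\sum_{A,B}z_B^{1'}z_A^{0'}\nabla_B^{0'}\nabla_A^{1'}+\sum_A z_A^{1'}\nabla_A^{1'}
=\mathfrak{D}+R^{1'}.
\]
Also $\pi(H)=R^{0'}-R^{1'}$. Since $\pi$ is a Lie algebra homomorphism, $\Omega^R$ in \eqref{casimir element 1} gives
\[
\triangle^R=(R^{0'}-R^{1'})^2+4\mathfrak{D}+4R^{1'}+2R^{0'}-2R^{1'},
\]
which is the stated formula.

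\textbf{Sp$(n)$.} I will avoid the root-by-root sum in \eqref{casimir element 1}. By Lemma \ref{lemma lie}, $\pi(M)=\sum_{A,B,A'}\overline{M}_{BA}z_B^{A'}\nabla_A^{A'}$, and the Casimir is basis independent. So I write
\[
\Omega^L=\sum_k M_k M_k^*
\]
over any basis of $\mathfrak{sp}(2n,\mathbb C)$, and use the completeness relation for the trace form $\langle A,B\rangle=\tfrac12\mathrm{tr}(AB)$ on $\mathfrak{sp}(2n,\mathbb C)=\{M: M^tJ+JM=0\}$. The projection of $E_{AB}$ onto $\mathfrak{sp}$ is $\tfrac12\bigl(E_{AB}+JE_{BA}J\bigr)$ up to sign conventions. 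This yields
\[
\sum_k (M_k)_{BA}(M_k^*)_{DC}=\delta_{AD}\delta_{BC}+J_{AC}J_{BD}\quad\text{(up to sign and normalisation)},
\]
and I will fix the normalisation by checking it against the $H_{i,i}^2$ terms.

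Then
\[
\triangle^L=\sum\bigl(\delta_{AD}\delta_{BC}\pm J_{AC}J_{BD}\bigr)\,z_B^{A'}\nabla_A^{A'}z_D^{B'}\nabla_C^{B'}.
\]
The $\delta\delta$ part gives
\[
\sum z_B^{A'}z_A^{B'}\nabla_A^{A'}\nabla_B^{B'}+2n\,(R^{0'}+R^{1'}).
\]
Splitting the first sum by $(A',B')$, the diagonal pieces give $(R^{0'})^2+(R^{1'})^2$ after reordering, which uses up part of the first order terms. The off-diagonal pieces give $2\mathfrak{D}$ plus first order corrections.

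The $JJ$ part is handled through \eqref{z jab}. The contraction $\sum z_B^{A'}J_{BD}z_D^{B'}=\varepsilon^{A'B'}|\mathbf{q}|^2$ and the contraction of $\nabla\nabla$ against $J$ produces $\triangle_0$, so this part becomes $-\tfrac12|\mathbf{q}|^2\triangle_0$ plus first order terms. The same contractions also produce the remaining pieces of $\mathfrak{D}$ and $R^{A'}$.

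\textbf{Main obstacle and check.} The hard step is tracking signs and the first order terms produced by commuting $\nabla$ past $z$, so that the coefficients come out to exactly $2\mathfrak{D}$ and $2n(R^{0'}+R^{1'})$. As an independent check I will apply both sides to the highest weight vector $p_{a,b}$ of \eqref{pab}. The left side gives the Casimir eigenvalue $\langle\lambda,\lambda+2\rho\rangle$ for $\lambda=aL_1+bL_2$. On the right side, $p_{a,b}$ is harmonic, $R^{0'}p_{a,b}=ap_{a,b}$, $R^{1'}p_{a,b}=bp_{a,b}$, and $\mathfrak{D}p_{a,b}$ is computable directly. This can also be verified on low degree polynomials such as $z_1^{0'}$ and $z_1^{0'}z_2^{1'}-z_1^{1'}z_2^{0'}$, which pins down any constant I get wrong.
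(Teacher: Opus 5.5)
Your Sp$(1)$ computation is the same as the paper's. You use $\pi(U^-)\pi(U)=\mathfrak{D}+R^{1'}$ and $\pi(H)=R^{0'}-R^{1'}$, substituted into $\Omega^R=H^2+4U^-U+2H$.

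For Sp$(n)$ you take a genuinely different route, and it is correct.

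\textbf{The paper's route.} It substitutes the root vector fields \eqref{Xij Yij Ui} family by family into the simplified Casimir \eqref{casimir element 1}. It normal-orders each product and then reassembles the sums over $i\neq j$ and over the blocks $i$, $n+i$, using \eqref{znabla}, \eqref{AneqB} and a final contraction.

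\textbf{Your route.} You use basis independence of $\sum_k M_k\otimes M_k^*$ together with the trace-orthogonal projection $X\mapsto\frac12(X+JX^tJ)$ onto $\mathfrak{sp}(2n,\mathbb{C})$. For the form $\frac12\mathrm{tr}$ this gives $\sum_k (M_k)_{BA}(M_k^*)_{DC}=\delta_{AD}\delta_{BC}-J_{AC}J_{BD}$. The computation then closes cleanly:
\begin{itemize}
\item The $\delta\delta$ block is the $\mathfrak{gl}(2n)$ Casimir. It gives $(R^{0'})^2+(R^{1'})^2+2\mathfrak{D}+(2n-1)(R^{0'}+R^{1'})$. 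The two mixed $(A',B')$ pieces are each exactly $\mathfrak{D}$, with no first-order correction.
\item The $JJ$ block gives $-\frac12|\mathbf{q}|^2\triangle_0$ via \eqref{z jab}--\eqref{z jab2}. It also gives $+(R^{0'}+R^{1'})$ from $J^2=-I$ in the normal ordering, and it contributes no $\mathfrak{D}$.
\end{itemize}
So the first-order coefficient is $(2n-1)+1=2n$, as stated.

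\textbf{The sign you left open.} The minus sign is forced, and your $H_{i,i}^2$ check does detect it. The entry $A=B=i$, $C=D=n+i$ receives $-1$ from $H_{i,i}\otimes H_{i,i}$ alone, and only the $JJ$ term can produce that value. With the wrong sign the first-order coefficient would become $2n-2$. Your $p_{a,b}$ test would also catch this, using $\mathfrak{D}p_{a,b}=-b\,p_{a,b}$.

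\textbf{Comparison.} Your approach is shorter and more transparent. It needs neither the reduction to \eqref{casimir element 1} nor the index bookkeeping, and it shows the structure as a $\mathfrak{gl}(2n)$ Casimir plus a symplectic correction. The paper's approach stays inside the explicit root basis it already needs for Theorem A and Lemma \ref{lem eigen}, without invoking the projection identity.
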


The proof will be given in Appendix.

\begin{cor}\label{cor ra tan}
\begin{equation*}\begin{aligned}\label{ra ta form}
\triangle= (1-|\mathbf{q}|^2)\bigg[\frac{1+|\mathbf{q}|^2}{|\mathbf{q}|^2}\triangle^R-\frac{2}{|\mathbf{q}|^2}\triangle^L+\frac{1-|\mathbf{q}|^2}{|\mathbf{q}|^2}(R^{0'}+R^{1'})^2 +\frac{4n-2+2|\mathbf{q}|^2}{|\mathbf{q}|^2}(R^{0'}+R^{1'}\bigr)\bigg].\\
\end{aligned}\end{equation*}
\end{cor}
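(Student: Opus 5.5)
The plan is to treat Proposition \ref{invariant laplace} and Proposition \ref{Ap 2} as a linear system in the operators $\triangle_0$ and $\mathfrak{D}$, and eliminate them. Write $E:=R^{0'}+R^{1'}$ and $r^2:=|\mathbf{q}|^2$, and work on $B^{4n}\setminus\{\mathbf{0}\}$, where division by $r^2$ is legitimate. Solve the second identity in \eqref{omegaLex} for $\mathfrak{D}$:
\begin{equation*}
\mathfrak{D}=\tfrac14\triangle^R-\tfrac14(R^{0'}-R^{1'})^2-\tfrac12 E .
\end{equation*}
Solve the first identity for $\triangle_0$:
\begin{equation*}
\tfrac14\triangle_0=\tfrac{1}{2r^2}\Bigl(2\mathfrak{D}+(R^{0'})^2+(R^{1'})^2+2nE-\triangle^L\Bigr).
\end{equation*}
Multiplication by $r^2$ is a zeroth-order operator applied on the left, so solving for $\triangle_0$ is harmless.

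Next, substitute both expressions into the bracket $\frac14\triangle_0+\mathfrak{D}-R^{0'}R^{1'}+E$ of \eqref{ex form}. The one algebraic point to verify is that $R^{0'}$ and $R^{1'}$ commute. This holds because $R^{A'}=\sum_A z_A^{A'}\nabla_A^{A'}$ involves only the variables in the $A'$-th column, and Lemma \ref{lem nabla} gives $\nabla_A^{A'}z_B^{B'}=\delta_{AB}\delta^{A'B'}$. Granting this, we have $(R^{0'}-R^{1'})^2=E^2-4R^{0'}R^{1'}$ and $(R^{0'})^2+(R^{1'})^2=E^2-2R^{0'}R^{1'}$, so every quadratic term in the $R$'s can be expressed through $E^2$ and $R^{0'}R^{1'}$.

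Carrying this out for the $\triangle_0$ term, $2\mathfrak{D}+(R^{0'})^2+(R^{1'})^2$ becomes $\frac12\triangle^R+\frac12E^2-E$. Hence
\begin{equation*}
\tfrac14\triangle_0=\tfrac{1}{2r^2}\Bigl(\tfrac12\triangle^R+\tfrac12E^2+(2n-1)E-\triangle^L\Bigr).
\end{equation*}
For the remaining terms, $\mathfrak{D}-R^{0'}R^{1'}+E=\frac14\triangle^R-\frac14E^2+\frac12E$. Adding the two and multiplying by $4(1-r^2)$, collect the coefficients:
\begin{itemize}
\item $\triangle^R$ gets $\frac{1}{r^2}+1=\frac{1+r^2}{r^2}$;
\item $\triangle^L$ gets $-\frac{2}{r^2}$;
\item $E^2$ gets $\frac{1}{r^2}-1=\frac{1-r^2}{r^2}$;
\item $E$ gets $\frac{2(2n-1)}{r^2}+2=\frac{4n-2+2r^2}{r^2}$.
\end{itemize}
These are exactly the coefficients in the stated formula.

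There is no real obstacle here: the content is entirely in Propositions \ref{invariant laplace} and \ref{Ap 2}, and the corollary is bookkeeping. The steps most likely to cause a slip are the commutation $[R^{0'},R^{1'}]=0$ and keeping the factor $r^{-2}$ on the left of the operators. I would check the final identity on a test function such as $p_{a,b}$, using $R^{0'}p_{a,b}=ap_{a,b}$ and $R^{1'}p_{a,b}=bp_{a,b}$. The formula is stated away from $\mathbf{q}=\mathbf{0}$; at the origin it extends by continuity for smooth $f$.
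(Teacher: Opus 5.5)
Your proof is correct and is essentially the paper's: the paper also solves the $\triangle^R$ identity of \eqref{omegaLex} for $\mathfrak{D}$ and eliminates $\mathfrak{D}$ to get $\frac{|\mathbf{q}|^2}{4}\triangle_0=\frac14\triangle^R-\frac12\triangle^L+\frac14(R^{0'}+R^{1'})^2+(n-\frac12)(R^{0'}+R^{1'})$, which is your formula multiplied by $r^2$, and then substitutes both into \eqref{ex form}. The paper uses the commutation $[R^{0'},R^{1'}]=0$ tacitly; it follows from $\nabla_A^{A'}z_B^{B'}=\delta_{AB}\delta^{A'B'}$ and the constant coefficients of the $\nabla_A^{A'}$, not from the two columns involving separate variables, which they do not, since $z_A^{1'}=-\sum_B J_A^B\overline{z_B^{0'}}$.
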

\begin{proof}

By $ \eqref{omegaLex}$, we get
\begin{equation}\begin{aligned}\label{D}
\mathfrak{D}&=\frac{1}{4}\triangle^R-\frac{1}{4}(R^{0'}-R^{1'})^2-\frac{1}{2}(R^{0'}+R^{1'}),\\
\frac12\left(\triangle^R-\triangle^L\right) &= \frac{|\mathbf{q}|^2}{4}\triangle_0+\mathfrak{D}-R^{0'}R^{1'}-(n-1)\bigl(R^{0'}+R^{1'}\bigr).
\end{aligned}\end{equation}
Their summation gives us
\begin{equation}\begin{aligned}\label{R-L}
\frac{|\mathbf{q}|^2}{4}\triangle_0&=\frac14\triangle^R-\frac12\triangle^L+\frac14 ( R^{0'}+R^{1'})^2+\left(n-\frac12\right)\bigl(R^{0'}+R^{1'}\bigr).
\end{aligned}\end{equation}
Substituting first identity in \eqref{D} and \eqref{R-L} into \eqref{ex form}, we get the result.
\end{proof}

\subsection{The action of ${\rm Sp}(n){\rm Sp}(1)$-invariant operators on $\mathcal{H}_{a,b}$}
\begin{lem}\label{lem eigen}
For any $\phi\in \mathcal{H}_{a,b}$, we have
\begin{equation}\begin{aligned}\label{en pab}
\triangle^L\phi=&\left(a^2+b^2+2na+2(n-1)b\right)\phi,\\
\triangle^R\phi=&\left((a-b)^2+2(a-b)\right)\phi,\\
\triangle \phi=&4(1-|\mathbf{q}|^2)a( 1-b)\phi.
\end{aligned}\end{equation}
\end{lem}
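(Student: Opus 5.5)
The plan is to exploit the fact that $\triangle^L$, $\triangle^R$ are Casimir operators and hence act as scalars on the irreducible module $\mathcal{H}_{a,b}$. First, I will check that $\triangle^L=\pi(\Omega^L)$ and $\triangle^R=\pi(\Omega^R)$ commute with the action of $\mathfrak{sp}(2n,\mathbb{C})\mathfrak{sl}(2,\mathbb{C})$. They preserve $\mathcal{H}_{a,b}$, since by Proposition \ref{Ap 2} they are built from $\triangle_0$, $\mathfrak{D}$ and $R^{A'}$, and these preserve bihomogeneity and harmonicity; more simply, they lie in the image of the universal enveloping algebra. By Schur's lemma and the irreducibility in Theorem A, each therefore acts on $\mathcal{H}_{a,b}$ by a scalar. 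It then suffices to evaluate them on the highest weight vector $p_{a,b}$.

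For the evaluation I use the simplified forms \eqref{casimir element 1}. On $p_{a,b}$ every raising operator $\pi(X_{i,j})$, $\pi(Y_{i,j})$, $\pi(U_i)$, $\pi(U)$ vanishes; this was verified in the proof of Theorem A, where one should read \eqref{pi pab} as saying that both terms are zero, including the cases with $i=1$. Hence only the Cartan terms survive:
\begin{equation*}
\triangle^L p_{a,b}=\sum_i\bigl(\pi(H_{i,i})^2+(n+1)\pi(H_{i,i})\bigr)p_{a,b},\qquad \triangle^R p_{a,b}=\bigl(\pi(H)^2+2\pi(H)\bigr)p_{a,b}.
\end{equation*}
By \eqref{pi H11H22} and \eqref{piH} we have $\pi(H_{1,1})p_{a,b}=ap_{a,b}$, $\pi(H_{2,2})p_{a,b}=bp_{a,b}$, $\pi(H_{i,i})p_{a,b}=0$ for $i\ge3$, and $\pi(H)p_{a,b}=(a-b)p_{a,b}$. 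This gives $\triangle^R=(a-b)^2+2(a-b)$ immediately.

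The main obstacle is the normalization of $\triangle^L$. The naive count gives $a^2+b^2+(n+1)(a+b)$, which does not match the claimed $a^2+b^2+2na+2(n-1)b$. The coefficient of $H_{i,i}$ coming from reordering $X^-_{i,j}X_{i,j}$, $Y^-_{i,j}Y_{i,j}$ and $U_i^-U_i$ must be rechecked: the correct sum of positive roots pairs as $2\rho=\sum_i 2(n-i+1)L_i$. The $\rho$-shift is therefore $\langle\lambda,2\rho\rangle=2na+2(n-1)b$ for $\lambda=aL_1+bL_2$, and with that correction the eigenvalue is $a^2+b^2+2na+2(n-1)b$. As an alternative check that avoids representation theory entirely, I would apply the explicit formula \eqref{omegaLex} directly to $p_{a,b}$. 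We have $\triangle_0p_{a,b}=0$ by harmonicity, and $R^{0'}p_{a,b}=ap_{a,b}$, $R^{1'}p_{a,b}=bp_{a,b}$. The remaining term $\mathfrak{D}p_{a,b}$ can be read off from the $\triangle^R$ formula:
\begin{equation*}
4\mathfrak{D}p_{a,b}=\bigl[(a-b)^2+2(a-b)-(a-b)^2-2(a+b)\bigr]p_{a,b}=-4b\,p_{a,b},
\end{equation*}
so $\mathfrak{D}p_{a,b}=-bp_{a,b}$. Substituting gives $\triangle^Lp_{a,b}=\bigl(-2b+a^2+b^2+2n(a+b)\bigr)p_{a,b}$, which is exactly the claimed value.

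Finally, for $\triangle\phi$ I will use \eqref{ex form} rather than Corollary \ref{cor ra tan}. Every $\phi\in\mathcal{H}_{a,b}$ is harmonic, and is bihomogeneous with $R^{0'}\phi=a\phi$ and $R^{1'}\phi=b\phi$; these operators commute with the group action and are scalars on $p_{a,b}$, so they are scalars on the whole module. Moreover $\mathfrak{D}\phi=\frac14(\triangle^R-(R^{0'}-R^{1'})^2-2(R^{0'}+R^{1'}))\phi=-b\phi$ by \eqref{D}. Hence
\begin{equation*}
\triangle\phi=4(1-|\mathbf{q}|^2)\bigl(0-b-ab+a+b\bigr)\phi=4(1-|\mathbf{q}|^2)a(1-b)\phi.
\end{equation*}
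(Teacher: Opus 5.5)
Your treatment of $\triangle^L$ and $\triangle^R$ is correct and is essentially the paper's argument: Schur's lemma, evaluation on $p_{a,b}$, and the $\rho$-shift $\langle\lambda,2\rho\rangle=2na+2(n-1)b$. The paper obtains the same shift by normal-ordering $\Omega^L$, which produces $2nH_{1,1}+2(n-1)H_{2,2}$ modulo $H_{3,3},\dots,H_{n,n}$. Your ``naive count'' does not fail because \eqref{casimir element 1} has wrong coefficients. It fails because $\sum_{i\neq j}X^-_{i,j}X_{i,j}=\sum_{i\neq j}X_{j,i}X_{i,j}$ contains, for $i>j$, terms whose right factor $X_{i,j}$ is a lowering operator. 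These terms do not annihilate $p_{a,b}$, and they contribute $\sum_{j<i}(H_{j,j}-H_{i,i})$. Your cross-check through \eqref{omegaLex}, using $\mathfrak{D}p_{a,b}=(\pi(U^-)\pi(U)-R^{1'})p_{a,b}=-b\,p_{a,b}$, is also fine.

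The gap is in your last paragraph. $\mathcal{H}_{a,b}$ is not a space of polynomials of bidegree $(a,b)$ in $(z^{0'},z^{1'})$. Moreover, $R^{0'}$ and $R^{1'}$ do not commute with the ${\rm Sp}(1)$-action: $R^{0'}-R^{1'}=\pi(H)$, so $[R^{0'},\pi(U)]=\pi(U)\neq0$. The module contains $\pi(U^-)^kp_{a,b}$, which has bidegree $(a-k,b+k)$, for $0\le k\le a-b$. Two examples:
\begin{itemize}
\item $z_1^{1'}=\pi(U^-)z_1^{0'}\in\mathcal{H}_{1,0}$ has $R^{0'}z_1^{1'}=0$, not $z_1^{1'}$.
\item $\phi=z_1^{0'}z_1^{1'}=\frac12\pi(U^-)p_{2,0}\in\mathcal{H}_{2,0}$ has $R^{0'}\phi=\phi$ and $\mathfrak{D}\phi=\phi$, not $-b\phi=0$.
\end{itemize}
So the identities $R^{0'}\phi=a\phi$, $R^{1'}\phi=b\phi$ and $\mathfrak{D}\phi=-b\phi$ that you substitute into \eqref{ex form} are false for general $\phi\in\mathcal{H}_{a,b}$. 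The same misconception invalidates the ``bihomogeneity'' reason in your first paragraph, although your enveloping-algebra remark saves that step.

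The conclusion survives only because the non-invariant pieces combine into an invariant operator. Since $R^{0'}$ and $R^{1'}$ commute, \eqref{D} gives
\begin{equation*}
\mathfrak{D}-R^{0'}R^{1'}+R^{0'}+R^{1'}=\tfrac14\triangle^R-\tfrac14\bigl(R^{0'}+R^{1'}\bigr)^2+\tfrac12\bigl(R^{0'}+R^{1'}\bigr).
\end{equation*}
The right side involves only $\triangle^R$ and the Euler operator, so it acts on all of $\mathcal{H}_{a,b}$ by $\frac14\bigl[(a-b)^2+2(a-b)\bigr]-\frac14(a+b)^2+\frac12(a+b)=a(1-b)$. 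Together with $\triangle_0\phi=0$, this gives the third identity for every $\phi$. Alternatively, compute only on $p_{a,b}$ and apply Schur to the invariant operator $\triangle/(1-|\mathbf{q}|^2)$; that it preserves $\mathcal{H}_{a,b}$ again follows from this rewriting.

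Once repaired, your route is slightly leaner than the paper's. The paper goes through Corollary \ref{cor ra tan}, so it also uses the $\triangle^L$ eigenvalue to eliminate $\triangle_0$, which is unnecessary on harmonic polynomials.
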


\begin{proof}
Since these operators are all ${\rm Sp}(n){\rm Sp}(1)$-invariant and $\mathcal{H}_{a,b}$ is irreducible, it is sufficient to check the lemma for $\phi$ to be the highest weight vector $p_{a,b}$ by Schur's Lemma.

Since $\pi(X_{i,j}),\pi(Y_{i,j}),\pi(U_i)$ annihilate $p_{a,b}$ for $i<j$ by \eqref{pi pab}-\eqref{3.24}, we use brackets
\begin{equation*}\begin{aligned}
\left[X_{i,j},X_{i,j}^{-}\right]=H_{i,i}-H_{j,j},\quad\left[Y_{i,j},Y_{i,j}^{-}\right]=H_{i,i}+H_{j,j},\quad\left[U_{i},U_{i}^{-}\right]=H_{i,i}
\end{aligned}\end{equation*}
to rewrite $\Omega^L$ in \eqref{casimir element} as
\begin{equation*}\begin{aligned}
\sum_{i} H_{i,i}^2+2\sum_{i<j}\bigl(X_{i,j}^-X_{i,j}
 +Y_{i,j}^{-}Y_{i,j} \bigr)+ 4\sum_{i} U_i^{-}U_i+2nH_{1,1}+2(n-1)H_{2,2},\;{\rm mod}\; H_{3,3},\;\dots H_{n,n}.
\end{aligned}\end{equation*}
This together with \eqref{pi pab}-\eqref{pi H11H22} implies
\begin{equation}\begin{aligned}
\triangle^Lp_{a,b}=&\left( \pi(H_{1,1})^2 + \pi(H_{2,2})^2+2n\pi(H_{1,1})+2(n-1)\pi(H_{2,2}) \right)p_{a,b}\\
=&\left(a^2+b^2+2na+2(n-1)b\right)p_{a,b}.
\end{aligned}\end{equation}
Using \eqref{piH}, we get the second formula in \eqref{en pab} by the formula for $\triangle^R$ in \eqref{casimir element}. Hence,
\begin{equation*}\begin{aligned}
\triangle p_{a,b}=&(1-|\mathbf{q}|^2)\bigg[\frac{1+|\mathbf{q}|^2}{|\mathbf{q}|^2}\left((a-b)^2+2(a-b)\right)-\frac{2}{|\mathbf{q}|^2}\left(a^2+b^2+2na+2(n-1)b\right)\\ &\qquad\qquad\quad+\frac{1-|\mathbf{q}|^2}{|\mathbf{q}|^2}(a+b)^2+\frac{4n-2+2|\mathbf{q}|^2}{|\mathbf{q}|^2}(a+b)\bigg]p_{a,b}\\
=&4(1-|\mathbf{q}|^2)a( 1-b)p_{a,b}\\
\end{aligned}\end{equation*}
by the formula of $\triangle$ in Corollary \ref{cor ra tan}.
\end{proof}

\section{The expansion of $\mathcal{QM}$-harmonic functions}
\subsection{The hypergeometric equation satisfied by the radial parts}
\begin{lem}\label{int by part}
For $X\in \mathfrak{sp}(n)\mathfrak{sp}(1)$ and $f,g\in C^2(S^{4n-1})$, we have
\begin{equation*}\begin{aligned}
\int_{S^{4n-1}}\pi(X) f(\eta)g(\eta)d\sigma(\eta)=-\int_{S^{4n-1}}f(\eta)\pi(X) g(\eta)d\sigma(\eta).
\end{aligned}\end{equation*}
In particular, $\Omega^L$ and $\triangle^R$ are self-adjoint operators on $S^{4n-1}$.
\end{lem}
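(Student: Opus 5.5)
The plan is to use the invariance of $\sigma$ under the one-parameter subgroups generated by $X$. By \eqref{representation defd}, for $X=(X_1,X')\in\mathfrak{sp}(n)\mathfrak{sp}(1)$ the vector field $\pi(X)$ is the infinitesimal generator of the flow
\[
\mathbf{q}\mapsto g_s\mathbf{q}:=\tau^{-1}\bigl(e^{s\tau(X_1^*)}\tau(\mathbf{q})e^{sX'}\bigr),
\]
and $g_s\in{\rm Sp}(n){\rm Sp}(1)$ for real $s$. This flow preserves $S^{4n-1}$, since ${\rm Sp}(n){\rm Sp}(1)\subset{\rm SO}(4n)$, and it preserves $\sigma$ by the definition of $\sigma$ as the invariant probability measure.

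\textbf{Integration by parts.} Set
\[
I(s)=\int_{S^{4n-1}}f(g_s\eta)\,g(g_s\eta)\,d\sigma(\eta).
\]
By invariance of $\sigma$, $I(s)$ is constant in $s$. Differentiating at $s=0$ is justified because $f,g\in C^2$ and the sphere is compact, so differentiation under the integral sign is allowed. This gives
\[
0=\int_{S^{4n-1}}\bigl(\pi(X)f\cdot g+f\cdot\pi(X)g\bigr)\,d\sigma,
\]
using the Leibniz rule for the vector field $\pi(X)$, which is tangent to the sphere. The one technical point is that $\pi(X)$ must be a genuine real vector field on $\mathbb{R}^{4n}$ when $X$ lies in the compact real form, and not just a complex holomorphic one. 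This follows from the remark before \eqref{representation defd}: ${\rm Sp}(n){\rm Sp}(1)$ preserves $\tau(\mathbb{H}^n)$ by Lemma \ref{tau MN} (2)--(3), so the holomorphic vector field of Lemma \ref{lemma lie} restricts to a real tangent field there. The identity then extends complex-linearly to all $X$ in the complexification $\mathfrak{sp}(2n,\mathbb{C})\oplus\mathfrak{sl}(2,\mathbb{C})$, since both sides are $\mathbb{C}$-linear in $X$.

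\textbf{Self-adjointness of the Casimirs.} Apply the identity twice. For $\Omega=\sum_i X_iX_i^*$,
\[
\int\pi(\Omega)f\cdot g\,d\sigma=\sum_i\int f\cdot\pi(X_i^*)\pi(X_i)g\,d\sigma .
\]
The right side equals $\int f\,\pi(\Omega)g\,d\sigma$ because the Casimir element does not depend on the choice of basis. Concretely, $\sum_i X_i^*X_i$ is again the Casimir element, obtained by exchanging the roles of the basis and its dual, and both bases appear in \eqref{casimir element}, which is symmetric under $X\leftrightarrow X^-$ together with the factor-$2$ pairing of $U_i,U_i^-$.

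An alternative is to pick a real orthonormal basis $\{Y_k\}$ of the compact form with respect to $-\langle\cdot,\cdot\rangle$. Then $\Omega$ is a negative multiple of $\sum_k Y_k^2$, and each $\pi(Y_k)$ is a real skew-adjoint operator, so $\pi(Y_k)^2$ is self-adjoint. This version also yields self-adjointness with respect to the Hermitian pairing $\int f\bar g\,d\sigma$, not only the bilinear one.

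Finally, the statement itself concerns functions on the sphere, while the formulas for $\triangle^L$ and $\triangle^R$ in Proposition \ref{Ap 2} also contain $|\mathbf{q}|^2\triangle_0$ and the radial operators $R^{A'}$. This causes no problem: $\pi(X)$ is tangent to the sphere, so the Casimirs act intrinsically on $C^2(S^{4n-1})$. I expect no serious obstacle beyond this reality/tangency check.
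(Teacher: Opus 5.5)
Your argument is correct and is essentially the paper's. Both rely only on the ${\rm Sp}(n){\rm Sp}(1)$-invariance of $\sigma$ along the one-parameter subgroup generated by $X$ and differentiate at $s=0$: the paper uses the change of variables $\zeta=h_t^{-1}\eta$ to move the flow from $f$ onto $g$, while you write $\frac{d}{ds}\int_{S^{4n-1}}(fg)(g_s\eta)\,d\sigma(\eta)=0$ and apply the Leibniz rule.

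You also justify two points the paper only asserts, namely the self-adjointness of the Casimir operators and the passage to the complexified basis. One correction there: with the paper's convention $e^{s\overline{M}^t}$, the map $\pi$ is conjugate-linear rather than $\mathbb{C}$-linear on the $\mathfrak{sp}(2n,\mathbb{C})$ factor. Both sides of the identity transform the same way, so your extension still goes through unchanged.
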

\begin{proof}
Let $h_t:=e^{tX}$ with $X\in \mathfrak{sp}(n)$. Then, by definition \eqref{representation defd} we have
\begin{equation*}\begin{aligned}
\int_{S^{4n-1}}\pi(X) f(\eta)g(\eta)d\sigma(\eta)=&\lim_{t\rightarrow 0}\int_{S^{4n-1}} \frac{d}{dt}f(h_t^{-1}\cdot \eta)g(\eta)d\sigma(\eta)\\
=& \lim_{t\rightarrow 0}\frac{d}{dt}\int_{S^{4n-1}} f(h_t^{-1}\cdot\eta)g(\eta)d\sigma(\eta)\\
=&\lim_{t\rightarrow 0}\frac{d}{dt}\int_{S^{4n-1}}f(\zeta) g(h_{t}\zeta)d\sigma(\zeta)\\
=&-\int_{S^{4n-1}}f(\zeta)\pi(X) g(\zeta)d\sigma(\zeta).
\end{aligned}\end{equation*}
Here we take an orthogonal transformation $\zeta=h_t^{-1}\eta$, which preserves the measure $d\sigma$ by ${\rm Sp}(n){\rm Sp}(1)\subset SO(4n)$. It is same for $X'\in$Sp$(1)$. The lemma is proved.
\end{proof}

Recall that the hypergeometric function ${}_2F_1(\alpha,\beta,\gamma;t)$ is defined as
\begin{equation}\begin{aligned}\label{2F1}
{}_2F_1(\alpha,\beta,\gamma;t):=\sum_{n=0}^{\infty}\frac{(\alpha)_n(\beta)_n}{(\gamma)_n n!}t^n,
\end{aligned}\end{equation}
 where $(\alpha)_n=\frac{\Gamma(n+\alpha)}{\Gamma(\alpha)}$, i.e. $(\alpha)_0=1$, $(\alpha)_n=\alpha\cdot(\alpha+1)\dots(\alpha+n-1)$. It satisfies the {\it hypergeometric equation}
\begin{equation}\begin{aligned}\label{hypergeo eq}
t(1-t)u''(t)+ (\gamma-(\alpha+\beta+1)t)u'(t)- \alpha\beta u(t)=0.
\end{aligned}\end{equation}
\begin{prop}\cite[Section 2.1.3 (12), Section 2.2.2]{Er}\label{hypergeo}
Suppose that $\alpha,\beta\in \mathbb{C}$ and $\gamma$ satisfying $\gamma-\alpha$ or $\gamma-\beta \notin \mathbb{N}^{-}\cup\{0\}$. Then, the hypergeometric equation \eqref{hypergeo eq} has two independent solutions $u_1$ and $u_2$ such that: (1) $u_1(t)={}_2F_1(\alpha,\beta,\gamma;t)$ absolutely converges on $[0,1)$;

(2) If $\alpha,\beta,\gamma\in \mathbb{N}$ with Re $(\gamma-\alpha-\beta)\geq 1$, then
\begin{equation}\begin{aligned}
u_2(t)={}_2F_1(\alpha,\beta,\alpha+\beta+1-\gamma;1-t)
\end{aligned}\end{equation}
converges on $(0,1)$, but diverges at $t=0$; If $\beta,\gamma\in \mathbb{N}$, $\alpha\in \mathbb{N}^{-}$ with Re $(\gamma-\alpha-\beta)\geq 1$, then
\begin{equation}\begin{aligned}
u_2(t)=t^{1-\gamma}{}_2F_1(2+\alpha-\gamma,2+\beta-\gamma,2-\gamma;t)
\end{aligned}\end{equation}
converges on $(0,1)$, but diverges at $t=0$.
\end{prop}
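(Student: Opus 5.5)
The plan is to treat \eqref{hypergeo eq} as a Fuchsian equation with regular singular points $t=0,1,\infty$, and to build both solutions from Frobenius series. \emph{First step.} At $t=0$ the indicial equation is $\rho(\rho-1+\gamma)=0$, so the exponents are $0$ and $1-\gamma$. Substituting $u=\sum c_k t^k$ gives the recursion
\begin{equation*}
c_{k+1}=\frac{(\alpha+k)(\beta+k)}{(\gamma+k)(k+1)}\,c_k .
\end{equation*}
With $c_0=1$ this recovers exactly the coefficients $\frac{(\alpha)_k(\beta)_k}{(\gamma)_k k!}$ of \eqref{2F1}, provided $\gamma\notin-\mathbb{N}\cup\{0\}$; this is where the hypothesis on $\gamma-\alpha$ or $\gamma-\beta$, together with $\gamma\in\mathbb{N}$ in our applications, is used. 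The ratio test gives $|c_{k+1}/c_k|\to1$, hence absolute convergence for $|t|<1$, and in particular on $[0,1)$. This proves (1).

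\emph{Second step, case (2) with $\alpha,\beta,\gamma\in\mathbb{N}$.} Change variables $s=1-t$. A direct computation shows that \eqref{hypergeo eq} becomes the hypergeometric equation in $s$ with parameters $(\alpha,\beta,\alpha+\beta+1-\gamma)$. Hence ${}_2F_1(\alpha,\beta,\alpha+\beta+1-\gamma;1-t)$ is a solution, and by the first step it converges for $|1-t|<1$, which includes $(0,1)$. For divergence at $t=0$, i.e.\ at $s=1$, I would use Gauss's test. The terms behave like
\begin{equation*}
\frac{(\alpha)_k(\beta)_k}{(c)_k\,k!}\sim C\,k^{\alpha+\beta-c-1}=C\,k^{\gamma-2},\qquad c=\alpha+\beta+1-\gamma,
\end{equation*}
and since $\gamma\ge1$ the series diverges. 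Equivalently, $u_2(t)\to\infty$ like $t^{1-\gamma}$, or like $\log t$ when $\gamma=1$. Linear independence from $u_1$ is then immediate, because $u_1(0)=1$ is finite while $u_2$ is unbounded near $0$.

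\emph{Second step, case with $\alpha\in\mathbb{N}^-$.} Here I would instead use the second Frobenius exponent $1-\gamma$ at $t=0$. Writing $u=t^{1-\gamma}v$ and substituting shows that $v$ satisfies a hypergeometric equation with shifted parameters. I would redo this substitution explicitly to pin down the parameters: the standard form is $(1+\alpha-\gamma,\,1+\beta-\gamma,\,2-\gamma)$, so the indices as printed in the statement should be checked against it. Once $v$ is identified, convergence of $v$ on $(0,1)$ again follows from the ratio test. Divergence at $0$ comes from the factor $t^{1-\gamma}$ when $\gamma\ge2$, provided the series $v$ does not vanish at $t=0$. Independence from $u_1$ follows as before.

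\emph{Main obstacle.} The hardest part is the resonant situation: $\gamma\in\mathbb{N}$ makes the exponents $0$ and $1-\gamma$ differ by an integer, and $2-\gamma$ or $\alpha+\beta+1-\gamma$ may be a nonpositive integer, so the naive series are undefined or terminate. I would handle this first by the limiting procedure of \cite{Er}: perturb $\gamma\mapsto\gamma+\epsilon$, form the appropriate combination of the two Frobenius solutions, and let $\epsilon\to0$. This produces a logarithmic term $u_1\log t$, which is what forces the blow-up at $t=0$. Failing a clean self-contained treatment, I would simply invoke the cited sections of \cite{Er} for these degenerate cases.
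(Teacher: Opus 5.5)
The paper gives no argument for this proposition beyond the citation of \cite{Er}, so your sketch has to stand on its own. Part (1) does, although what it actually needs is $\gamma\notin-\mathbb{N}\cup\{0\}$, not the stated condition on $\gamma-\alpha$, $\gamma-\beta$. You are also right that the second formula in (2) should read $t^{1-\gamma}{}_2F_1(1+\alpha-\gamma,1+\beta-\gamma,2-\gamma;t)$.

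Part (2) has a genuine gap. You treat resonance as an occasional complication, but under the hypotheses of (2) it is forced.
\begin{itemize}
\item In the first case, $\alpha,\beta,\gamma\in\mathbb{N}$ and $\gamma-\alpha-\beta\geq1$ make the lower parameter $c=\alpha+\beta+1-\gamma$ a nonpositive integer. So $(c)_k=0$ for every $k\geq\gamma-\alpha-\beta$, while $(\alpha)_k(\beta)_k\neq0$, and the displayed $u_2$ is simply not defined. Your appeal to the first step (which needs $c\notin-\mathbb{N}\cup\{0\}$) and your Gauss-test asymptotics $\frac{(\alpha)_k(\beta)_k}{(c)_k k!}\sim Ck^{\gamma-2}$ therefore have nothing to act on.
\item In the second case, $2-\gamma\leq0$ for every $\gamma\geq2$. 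Even the corrected series then makes sense only if it terminates before $k=\gamma-1$ (here: iff $\beta\leq\gamma-1$), which you never check. For $\gamma=1$ the formula just returns $u_1$.
\end{itemize}
Your fallback (perturb $\gamma$, or quote \cite{Er}) cannot rescue the first case as literally stated, because what it produces is a different, logarithmic second solution. Also, for $\gamma\geq2$ the blow-up at $0$ comes from the leading factor $t^{1-\gamma}$, not from the logarithm, which may be absent.

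The way out is to stop looking for a closed form of $u_2$. Prove instead what the proof of Theorem \ref{no boundary decomp} actually uses: for $\gamma\in\mathbb{N}$, every solution bounded near $t=0$ is a multiple of $u_1$. By Abel's identity, the Wronskian of two solutions is a constant multiple of $W(s)=s^{-\gamma}(1-s)^{\gamma-\alpha-\beta-1}$. Reduction of order then gives the solution
\begin{equation*}
u_2(t)=u_1(t)\int_t^{t_0}\frac{W(s)}{u_1(s)^2}\,ds,\qquad 0<t\leq t_0,
\end{equation*}
whose Wronskian with $u_1$ is $-W\neq0$. Since $u_1(0)=1$, you can fix $t_0>0$ with $|u_1|\geq\frac12$ and $\mathrm{Re}\bigl[(1-s)^{\gamma-\alpha-\beta-1}u_1(s)^{-2}\bigr]\geq\frac12$ on $[0,t_0]$. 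Then $|u_2(t)|\geq\frac14\int_t^{t_0}s^{-\gamma}\,ds\to\infty$ as $t\to0^+$, because $\gamma\geq1$. Hence $c_1u_1+c_2u_2$ is bounded at $0$ only if $c_2=0$.

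This handles every resonant case, with or without logarithms, at once. It is exactly what is needed in Theorem \ref{no boundary decomp}, where $\gamma=2n+a+b\geq2$.
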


\begin{proof}[Proof of Theorem \ref{no boundary decomp}]
For fixed $a,b$, let $P_{a,b}$ be the orthogonal projection from $L^2(S^{4n-1})$ onto $\mathcal{H}_{a,b}$, and let $\{\varphi_{a,b}^{(j)}\}$ be an orthonormal basis of $\mathcal{H}_{a,b}$. Then, $P_{a,b}$ is the integral operator
\begin{equation}\begin{aligned}\label{decom pab}
P_{a,b}f(\zeta):=\int_{S^{4n-1}}K_{a,b}(\zeta,\eta)f(\eta)d\sigma(\eta),
\end{aligned}\end{equation}
for $f\in L^2(S^{4n-1})$, with the kernel
\begin{equation*}\begin{aligned}
K_{a,b}(\zeta,\eta):=\sum_{j=1}^{m_{a,b}}\varphi_{a,b}^{(j)}(\zeta)\overline{\varphi_{a,b}^{(j)}(\eta)},\qquad \zeta,\eta\in S^{4n-1},
\end{aligned}\end{equation*}
where $m_{a,b}={\rm dim} \mathcal{H}_{a,b}$.
By applying the decomposition of $L^2(S^{4n-1})$ in Theorem A to $u(r\cdot)\in L^2(S^{4n-1})$ for fixed $r\in(0,1)$, we get $u(r\zeta)=\sum_{a,b}u_{a,b}(r\zeta)$, with
\begin{equation}\begin{aligned}\label{f lambda}
u_{a,b}(r\zeta)&:=\int_{S^{4n-1}}K_{a,b}(\zeta,\eta)u(r\eta)d\sigma(\eta).
\end{aligned}\end{equation}

    On the other hand, it is direct to check that
\begin{equation}\begin{aligned}
R^{0'}+R^{1'}=\sum_{A,A'}z_A^{A'}\nabla_A^{A'}u(r\zeta)=r\frac{du}{dr}(r\zeta),
\end{aligned}\end{equation}
is the Euler operator by
\begin{equation*}\begin{aligned}
\frac12(x+\mathbf{i}y)(\partial_x-\mathbf{i}\partial_y)+\frac12(x-\mathbf{i}y)(\partial_x+\mathbf{i}\partial_y)=x\partial_x+y\partial_y.
\end{aligned}\end{equation*}
Thus, the $\mathcal{QM}$-invariant Laplacian operator $\triangle$ can be rewritten as
\begin{equation}\begin{aligned}\label{radial decomp}
\triangle=&(1-r^2)\bigg[\frac{1+r^2}{r^2}\triangle^R-\frac{2}{r^2}\triangle^L+\frac{1-r^2}{r^2}\left(r\frac{d}{dr} \right)^2 +\frac{4n-2+2r^2}{r^2} r\frac{d}{dr}\bigg]\\
\end{aligned}\end{equation}
by the expression of $\triangle$ in Corollary \ref{cor ra tan}.

  Now applying radial part of $\triangle$ in $\eqref{radial decomp}$ to \eqref{f lambda} with $u$ satisfying $\triangle u=0$, we get
\begin{equation}\begin{aligned}\label{ra-tan}
&(1-r^2)\bigg(\frac{1-r^2}{r^2}\left(r\frac{d}{dr} \right)^2 +\frac{4n-2+2r^2}{r^2}\cdot r\frac{d}{dr}\bigg)u_{a,b}(r\zeta)\\
=&-(1-r^2)\int_{S^{4n-1}}K_{a,b}(\zeta,\eta)\left(\frac{1+r^2}{r^2}\triangle^R-\frac{2}{r^2}\triangle^L\right)u(r\eta)d\sigma(\eta)\\
=&-(1-r^2)\int_{S^{4n-1}}u(r\eta)\bigg(\frac{1+r^2}{r^2}\triangle^R_{\eta}-\frac{2}{r^2}\triangle^L_{\eta}\bigg)K_{a,b}(\zeta,\eta)d\sigma(\eta)\\
=&-(1-r^2)\int_{S^{4n-1}}u(r\eta)\sum_{j=1}^{m}\varphi_{a,b}^{(j)}(\zeta)\bigg( \frac{1+r^2}{r^2}\triangle^R-\frac{2}{r^2}\triangle^L\bigg)\overline{\varphi_{a,b}^{(j)}(\eta)}d\sigma(\eta)\\
=&-(1-r^2)\left[\frac{1+r^2}{r^2}\left((a-b)^2-2(a-b)\right)-\frac{2}{r^2}\left(a^2+b^2+2n(a+b)-2b\right)\right]u_{a,b}(r\zeta)
\end{aligned}\end{equation}
by using the self-adjointness of $\triangle^L$ and $\triangle^R$ on $S^{4n-1}$ in Lemma \ref{int by part} and Lemma \ref{lem eigen}. Since $\varphi_{a,b}^{(j)}(r\zeta)=r^{a+b}\varphi_{a,b}^{(j)}(\zeta)$ as a homogenous function of degree $a+b$, we can write
\begin{equation}\begin{aligned}\label{varphi ab}
u_{a,b}(r\zeta)=\sum_jr^{a+b}g_{a,b}^{(j)}(r^2)\varphi_{a,b}^{(j)}(\zeta),
\end{aligned}\end{equation}
for some function $g_{a,b}^{(j)}$ on $[0,1)$.
Moreover, $\varphi_{a,b}^{(j)}$ are linear independent. So \eqref{ra-tan} implies that
\begin{equation}\begin{aligned}\label{eq decom}
\bigg[\frac{1-r^2}{r^2}\left(r\frac{d}{dr} \right)^2 + \frac{4n-2+2r^2}{r^2} r\frac{d}{dr}&+\frac{1+r^2}{r^2}\left((a-b)^2+2(a-b)\right)\bigg.\\
&- \bigg.\frac{2}{r^2}\left(a^2+b^2+2n(a+b)-2b\right)\bigg]r^{a+b}g_{a,b}^{(j)}(r^2)=0.
\end{aligned}\end{equation}
For fixed $a,b,j$, denote $g_{a,b}^{(j)}$ by $g$ briefly. Note that radial derivative satisfy
\begin{equation}\begin{aligned}\label{radial derivatives}
r\frac{d}{dr}\left(r^{a+b}g(r^2)\right)=& r^{a+b}\left(2r^2g' + (a+b)g\right),\\
\left(r\frac{d}{dr} \right)^2\left(r^{a+b}g(r^2)\right)=& r^{a+b}\left(4r^4g''+ 4r^2(a+b+1)g' +(a+b)^2 g\right).
\end{aligned}\end{equation}
Substituting \eqref{radial derivatives} into \eqref{eq decom}, we get
\begin{equation}\begin{aligned}\label{hyper eq}
4r^2(1-r^2)g''(r^2)+ 4\bigl[(1-r^2)(a+b +1) +2n-1+r^2\bigr]g'(r^2)+\mathfrak{C}g(r^2)=0,
\end{aligned}\end{equation}
where
\begin{equation*}\begin{aligned}
\mathfrak{C}:=&\frac{1-r^2}{r^2}(a+b)^2+ \frac{4n-2+2r^2}{r^2}(a+b)+\frac{1+r^2}{r^2}\left((a-b)^2+2(a-b)\right)\\
&-\frac{2}{r^2}\left(a^2+b^2+2n(a+b)-2b\right)=- 4a(b-1 ).
\end{aligned}\end{equation*}
Thus, $g$ satisfies
\begin{equation}\begin{aligned}\label{g eq}
r^2(1-r^2)g''(r^2)+ \bigl(2n+a+b-(a+b)r^2\bigr)g'(r^2)- a(b-1)g(r^2)=0,\\
\end{aligned}\end{equation}
which is the standard hypergeometric equation \eqref{hyper eq} with $t=r^2$, $\alpha=a$, $\beta=b-1$, $\gamma=2n+a+b$ and Re $(\gamma-\alpha-\beta)=2n+1>1$.
Since $g$ is continuous on $[0,1)$, we must have
\begin{equation*}\begin{aligned}
g^{(j)}_{a,b}(r^2) ={}_2F_1\bigl(a,\; b-1,\; 2n+a+b;\; r^2\bigr)
\end{aligned}\end{equation*}
up to a factor $C$ by Proposition \ref{hypergeo}.
\end{proof}
\subsection{The convergence}
 Now we show that \eqref{hermitian} uniformly and absolutely converges by the method in \cite{Stein}. For small $\varepsilon$, it is easy to see that $\triangle$ is uniformly elliptic by its expression \eqref{ex form} on the ball $B_{\varepsilon}^{4n}$ with radius $\varepsilon$ centered at the origin. Thus, if $u$ satisfies $\triangle u=0$ on $B^{4n}_{\varepsilon}$, then $u$ must be $C^{\infty}(B^{4n}_{\varepsilon})$ by elliptic regularity (cf. e.g. \cite[lemma 6.1]{Gilbrag}). For given $\mathbf{a}\in B^{4n}$, since $\triangle(u\circ \varphi_{\mathbf{a}})=0$ by invariance, we must have $u\circ \varphi_{\mathbf{a}}\in C^{\infty}$ near the origin. This together with the smoothness of $\varphi_{\mathbf{a}}$ implies $u$ is $C^{\infty}$ near $\mathbf{a}$. Hence $u$ is $C^{\infty}$ on $B^{4n}$.

  For $0\leq r<1$, we can write decomposition \eqref{varphi ab} of $u$ as
\begin{equation}\begin{aligned}\label{urzeta}
u(r\zeta)=:\sum_{a\geq 2b}F_{a,b}(r^2)r^{a+b}u_{a,b}(\zeta),
\end{aligned}\end{equation}
with some $u_{a,b}\in \mathcal{H}_{a,b}$, where $F_{a,b}(r^2):={}_2F_1\bigl(a,\; b-1,\; 2n+a+b;\; r^2\bigr)$. But $u_{a,b}$ is a harmonic polynomial of homogeneous degree $(a+b)$ satisfying
\begin{equation}\begin{aligned}\label{tris}
\triangle_{0,S}u_{a,b}(\zeta):=C_{a,b}u_{a,b}(\zeta),\qquad{\rm where}\qquad C_{a,b}:=-(a+b)(a+b+4n-2),
\end{aligned}\end{equation}
by \cite[Section 3.1.4]{Stein}, where $\triangle_{0,S}$ is the spherical Euclidean Laplacian on $S^{4n-1}$. So
\begin{equation}\begin{aligned}\label{es uab}
\int_{S^{4n-1}}\left(\triangle_{0,S}\right)^{k} u(r_0\zeta)\overline{u_{a,b}(\zeta)}d\sigma(\zeta)=&\int_{S^{4n-1}}u(r_0\zeta)\left(\triangle_{0,S}\right)^{k}\overline{u_{a,b}(\zeta)}d\sigma(\zeta)\\
=&C_{a,b}^{k}F_{a,b}(r_0^2)r_0^{a+b}||u_{a,b}||_{L^2(S^{4n-1})}^2,
\end{aligned}\end{equation}
for $0<r_0<1$, by \eqref{urzeta}-\eqref{tris}.
Now applying the Cauchy-Schwartz inequality to the left hand side of \eqref{es uab}, we get
\begin{equation}\begin{aligned}\label{L2 es}
||u_{a,b}||_{L^2(S^{4n-1})}\leq\frac{||u||_{C^{2k}(B^{4n}_{r_0})}}{F_{a,b}(r_0^2)r_0^{a+b}(a+b+1)^{2k}}.
\end{aligned}\end{equation}
Denote by $\mathcal{B}(\zeta,\varepsilon)$ the Euclidean ball of radius $\varepsilon$ centered at $\zeta\in S^{4n-1}$. By the mean value formula for harmonic functions, we get
\begin{equation}\begin{aligned}\label{L1 es}
|u_{a,b}(\zeta)|=&\frac{1}{|(\mathcal{B}(\zeta,\varepsilon))|}\left|\int_{\mathcal{B}(\zeta,\varepsilon)}u_{a,b}dV\right|
\leq \frac{||u_{a,b}||_{L^2(\mathcal{B}(\zeta,\varepsilon))}}{|(\mathcal{B}(\zeta,\varepsilon))|^{\frac12}}\leq\frac{||u_{a,b}||_{L^2(\mathcal{B}(0,1+\varepsilon))}}{|(\mathcal{B}(\zeta,\varepsilon))|^{\frac12}}\\
\leq& \frac{1}{|(\mathcal{B}(\zeta,\varepsilon))|^{\frac12}}\left(\omega_{4n}\int_0^{1+\varepsilon}\frac{1}{4n}\rho^{2a+2b+4n-1}d\rho\int_{S^{4n-1}}|u_{a,b}(\zeta)|^2 d\sigma\right)^{\frac12}\\
\lesssim&\frac{(1+\varepsilon)^{a+b+2n}}{(a+b)\varepsilon^{2n}} ||u_{a,b}||_{L^2(S^{4n-1})}\\
\lesssim&C (a+b+1)^{2n-1}||u_{a,b}||_{L^2(S^{4n-1})},
\end{aligned}\end{equation}
if take $\varepsilon= \frac{1}{a+b}$, where $C$ is a constant independent of $a$, $b$ and $\omega_{4n}$ is the volume of unit ball in $\mathbb{R}^{4n}$.
By \eqref{L2 es}-\eqref{L1 es}, we get that for $0<r<r_0$,
\begin{equation}\begin{aligned}
\sum_{a\geq 2b}F_{a,b}(r^2)r^{a+b}|u_{a,b}(\zeta)|\lesssim\sum_{a\geq 2b}\frac{F_{a,b}(r^2)r^{a+b}}{F_{a,b}(r_0^2)r_0^{a+b}}\frac{1}{(a+b+1)^{2k-2n+1}}<\infty.
\end{aligned}\end{equation}
if we take $k>n$, since $F_{a,b}(r^2)$ is an nonnegative increasing function in $r^2$ by its definition \eqref{hypergeo}. Thus, \eqref{no boundary decomp eq} uniformly and absolutely converges on any compact subset of $B^{4n}$.

\section{The Green formula and solutions to the Dirichlet problem for $\mathcal{QM}$-invariant Laplace equation}
\subsection{The Green formula for $\mathcal{QM}$-invariant Laplacian}
Let $J_R(\psi)$ be the Jacobian determinant of a smooth transformation $\psi$ from $B^{4n}$ to itself.
\begin{lem}\label{invariant measure}
Let $\rho(\mathbf{q})=1-|\mathbf{q}|^2$. Then, (1) for $\mathbf{a}\in B^{4n},$
\begin{equation}\begin{aligned}\label{jacobian}
J_R(\varphi_{\mathbf{a}})(\mathbf{q})=\frac{\rho(\varphi_{\mathbf{a}}(\mathbf{q}))^{2n+2}}{\rho(\mathbf{q})^{2n+2}}.
\end{aligned}\end{equation}
(2) The measure $d\widetilde{V}(\mathbf{q})=\rho(\mathbf{q})^{-2n-2}dV(\mathbf{q})$ is invariant under $\mathcal{QM}$-transformations.
\end{lem}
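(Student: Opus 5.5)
The plan is to compute the Jacobian at a single point and transport the result by the group structure. For (1), I would first treat $\mathbf{q}=\mathbf{0}$. By Lemma \ref{1st differ}, the real differential of $\varphi_{\mathbf{a}}$ at $\mathbf{0}$ is the $\mathbb{R}$-linear map $\mathbf{q}\mapsto -(s_{\mathbf{a}}I_n+\frac{s_{\mathbf{a}}}{1+s_{\mathbf{a}}}\mathbf{a}\mathbf{a}^*-\dots)\mathbf{q}$. Equivalently, from the Taylor expansion \eqref{taylor}, $d\varphi_{\mathbf{a}}(\mathbf{0})\mathbf{q}=-\mathbf{A}\mathbf{q}+\mathbf{a}\mathbf{a}^*\mathbf{q}$. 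By \eqref{eigen}, this map acts on the quaternionic line $\mathbf{a}\mathbb{H}$ (real dimension $4$) as multiplication by $-(1-|\mathbf{a}|^2)=-s^2$, and on $\mathbf{a}^{\perp}$ (real dimension $4n-4$) as multiplication by $-s$. Its real determinant is therefore $s^{8}\cdot s^{4n-4}=s^{4n+4}=(1-|\mathbf{a}|^2)^{2n+2}$. Since $\varphi_{\mathbf{a}}(\mathbf{0})=\mathbf{a}$, this is exactly $\rho(\varphi_{\mathbf{a}}(\mathbf{0}))^{2n+2}/\rho(\mathbf{0})^{2n+2}$. One can check that the sign is positive using a connectedness argument in $\mathbf{a}$ starting from $\mathbf{a}=\mathbf{0}$, where $\varphi_{\mathbf{0}}=-{\rm id}$ has determinant $(-1)^{4n}=1$. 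The sign is irrelevant anyway if $J_R$ is interpreted as an absolute value.

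For general $\mathbf{q}$, I would apply Theorem \ref{thm mobius}(1) with $\mathbf{b}=\mathbf{q}$, so that $\varphi_{\mathbf{a}}\circ\varphi_{\mathbf{q}}=(\mathbf{U},q_0)\circ\varphi_{\mathbf{c}}$ with $\mathbf{c}=\varphi_{\mathbf{q}}(\mathbf{a})$. Evaluating the chain rule at $\mathbf{0}$, and using that unitary maps have Jacobian $1$ and that $\varphi_{\mathbf{q}}(\mathbf{0})=\mathbf{q}$, gives
\begin{equation*}
J_R(\varphi_{\mathbf{a}})(\mathbf{q})\,(1-|\mathbf{q}|^2)^{2n+2}=(1-|\mathbf{c}|^2)^{2n+2}.
\end{equation*}
It remains to show $1-|\mathbf{c}|^2=1-|\varphi_{\mathbf{a}}(\mathbf{q})|^2$. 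Since $(\mathbf{U},q_0)\circ\varphi_{\mathbf{c}}(\mathbf{0})=\mathbf{U}\mathbf{c}q_0^*=\varphi_{\mathbf{a}}(\mathbf{q})$, the two points have the same norm. Note that this is also the standard identity
\begin{equation*}
1-|\varphi_{\mathbf{a}}(\mathbf{q})|^2=\frac{(1-|\mathbf{a}|^2)(1-|\mathbf{q}|^2)}{|1-\mathbf{a}^*\mathbf{q}|^2},
\end{equation*}
which one could alternatively verify directly. I expect the only delicate point to be the eigenvalue computation of the differential on $\mathbf{a}\mathbb{H}$ versus $\mathbf{a}^\perp$. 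Noncommutativity requires care here, but right multiplication by quaternions commutes with left matrix multiplication, so both eigenspaces are right $\mathbb{H}$-submodules and the determinant count above is legitimate.

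For (2), I would use the change of variables formula. For $\psi=(\mathbf{U},q_0)\circ\varphi_{\mathbf{a}}$ and integrable $f$,
\begin{equation*}
\int f\circ\psi\,\rho^{-2n-2}dV=\int f(\psi(\mathbf{q}))\rho(\psi(\mathbf{q}))^{-2n-2}J_R(\psi)(\mathbf{q})\,dV(\mathbf{q})=\int f\rho^{-2n-2}dV.
\end{equation*}
The first equality rewrites $\rho(\mathbf{q})^{-2n-2}$ as $\rho(\psi(\mathbf{q}))^{-2n-2}J_R(\psi)(\mathbf{q})$ using (1), together with the fact that unitary maps preserve both $\rho$ and $dV$. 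The second equality is the change of variables $\mathbf{q}\mapsto\psi(\mathbf{q})$, which is a diffeomorphism of $B^{4n}$.
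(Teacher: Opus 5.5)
Your proposal is correct and is essentially the paper's proof. The paper also computes $J_R(\varphi_{\mathbf{a}})(\mathbf{0})=s^{4n+4}$ from the eigenvalues $|\mathbf{a}|^2-1$ and $-s$ of $\mathbf{a}\mathbf{a}^*-\mathbf{A}$ (multiplicities $4$ and $4n-4$), transports it to general $\mathbf{q}$ via Theorem~\ref{thm mobius}, and gets (2) by change of variables. The only difference is cosmetic: the paper writes $\varphi_{\mathbf{a}}=\varphi_{\mathbf{p}}\circ(\mathbf{U},q_0)\circ\varphi_{\mathbf{q}}$ with $\mathbf{p}=\varphi_{\mathbf{a}}(\mathbf{q})$ and uses $J_R(\varphi_{\mathbf{q}})(\mathbf{q})=\rho(\mathbf{q})^{-2n-2}$ from the involution, whereas you evaluate $\varphi_{\mathbf{a}}\circ\varphi_{\mathbf{q}}=(\mathbf{U},q_0)\circ\varphi_{\mathbf{c}}$ at $\mathbf{0}$ and use $|\mathbf{c}|=|\varphi_{\mathbf{a}}(\mathbf{q})|$. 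Your first formula for the differential has the sign of the $\mathbf{a}\mathbf{a}^*$-term reversed (it should be $-s_{\mathbf{a}}I_n+\frac{s_{\mathbf{a}}}{1+s_{\mathbf{a}}}\mathbf{a}\mathbf{a}^*$), which is harmless since you then use the correct $\mathbf{a}\mathbf{a}^*-\mathbf{A}$; and no connectedness argument is needed for the sign, since both eigenvalues are real with even multiplicities.
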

\begin{proof}
(1) To prove \eqref{jacobian}, we first show
\begin{equation}\begin{aligned}\label{jacobian sp}
J_R(\varphi_{\mathbf{a}})(\mathbf{0})=\rho(\mathbf{a})^{2n+2}\quad{\rm \;and}\quad J_R(\varphi_{\mathbf{a}})(\mathbf{a})=\rho(\mathbf{a})^{-2n-2}.
\end{aligned}\end{equation}
Note that
\begin{equation*}\begin{aligned}
\varphi_{\mathbf{a}}(\mathbf{q})=(\mathbf{a}-\mathbf{A}\mathbf{q})(1+\mathbf{a}^*\mathbf{q}+O(|\mathbf{q}|^2))=\mathbf{a}+(\mathbf{a}\mathbf{a}^*-\mathbf{A})\mathbf{q}+O(|\mathbf{q}|^2).
\end{aligned}\end{equation*}
By \eqref{eigen}, the $n\times n$ quaternionic matrix $\mathbf{a}\mathbf{a}^*-\mathbf{A}$ has eigenvalue $|\mathbf{a}|^2-1$ and $-s$ with multiplicity $4$ and $4n-4$ respectively.
So the induced linear transformation on $\mathbb{R}^{4n}$ of $\varphi_{\mathbf{a}}$ has determinant $s^{4n+4}$. Thus, the first identity in \eqref{jacobian sp} holds. Consequently, $J_R(\varphi_{\mathbf{a}})(\mathbf{a})=\frac{1}{J_R(\varphi_{\mathbf{a}})(\mathbf{0})}$, since $\varphi_{\mathbf{a}}$ is an involution and $\varphi_{\mathbf{a}}(\mathbf{a})=\mathbf{0}$.

  Now let $\varphi_{\mathbf{a}}(\mathbf{q})=\mathbf{p}$. By Proposition 3.1, there exists $(\mathbf{U},q_0)\in {\rm Sp}(n){\rm Sp}(1)$ such that $\varphi_{\mathbf{a}}=\varphi_{\mathbf{p}}\circ (\mathbf{U},q_0)\circ\varphi_{\mathbf{q}}$. Hence, by \eqref{jacobian sp}, we get
\begin{equation*}\begin{aligned}
(J_R\varphi_{\mathbf{a}})(\mathbf{q})&=(J_R\varphi_{\mathbf{p}})(\mathbf{0})J_R( \mathbf{U},q_0)(J_R\varphi_{\mathbf{q}})(\mathbf{q})\\
&=(J_R\varphi_{\mathbf{p}})(\mathbf{0})(J_R\varphi_{\mathbf{q}})(\mathbf{q})=\bigg(\frac{1-|\mathbf{p}|^2}{1-|\mathbf{q}|^2}\bigg)^{2n+2}.\\
\end{aligned}\end{equation*}
(2) $d\widetilde{V}$ is obviously invariant under Sp$(n)$Sp$(1)$. For $f\in L^1(d\widetilde{V})$ and $\mathbf{a}\in B^{4n}$,
\begin{equation*}\begin{aligned}
\int_{B^{4n}} fd\widetilde{V}&=\int_{B^{4n}} f(\mathbf{p})\left(1-|\mathbf{p}|^2\right)^{-2n-2}dV(\mathbf{p})\\
&=\int_{B^{4n}} f(\varphi_{\mathbf{a}}(\mathbf{q}))\left(1-|\varphi_{\mathbf{a}}(\mathbf{q})|^2\right)^{-2n-2}J_R(\varphi_{\mathbf{a}})(\mathbf{q})dV(\mathbf{q})\\
&=\int_{B^{4n}} f(\varphi_{\mathbf{a}}(\mathbf{q}))d\widetilde{V}(\mathbf{q})\\
\end{aligned}\end{equation*}
by \eqref{jacobian}. The corollary is proved.
\end{proof}
It follows from the well-known divergence formula \cite[Appendix C.2]{Evans} that
\begin{equation}\begin{aligned}\label{int by part d}
\int_{B^{4n}_r}u\nabla_{A}^{A'}vdV=-\int_{B^{4n}_r}v\nabla_{A}^{A'}udV+\int_{S^{4n-1}_r}uv\nabla_{A}^{A'}\chi_r d\sigma_r,
\end{aligned}\end{equation}
where  $d\sigma_r$ is the standard surface measure on $S_r^{4n-1}$, and $\chi_r$ is the defining function of $B^{4n}_r=\{\mathbf{q};\chi_r(\mathbf{q}):=\frac{r^2-|\mathbf{q}|^2}{2r}<0\}$ satisfying $|{\rm grad}\; \chi_r|=1$ on $\partial B^{4n}_r$.
We need the following Green formula for $\mathcal{QM}$-invariant Laplacian.
\begin{prop}\label{Green}
Let $u,v\in C^2(B^{4n})$ and $r<1$. Then,
\begin{equation}\begin{aligned}\label{eq green}
\int_{B_{r}^{4n}}(u\triangle v-v\triangle u) d\widetilde{V}=2\int_{S^{4n-1}}(uR^{0'}v-vR^{1'}u)(r\zeta)\frac{r^{4n-2}d\sigma(\zeta)}{(1-r^2)^{2n}}.
\end{aligned}\end{equation}

\end{prop}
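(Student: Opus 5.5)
The plan is to write $\triangle\, d\widetilde{V}$ in divergence form and then apply the integration by parts formula \eqref{int by part d}. Since $d\widetilde V=\rho^{-2n-2}dV$ and, by Proposition \ref{invariant laplace}, $\triangle=4\rho\,L$ with
$$L=\sum_{A,B}J_A^B\nabla_A^{0'}\nabla_B^{1'}+\mathfrak D-R^{0'}R^{1'}+R^{0'}+R^{1'},$$
we get $\triangle v\,d\widetilde V=4\rho^{-2n-1}Lv\,dV$.

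The first step is to rewrite the second-order part of $L$ as $\sum_{A,B}M_{AB}\nabla_A^{0'}\nabla_B^{1'}$. Here
$$M_{AB}=J_A^B+z_A^{1'}z_B^{0'}-z_A^{0'}z_B^{1'},$$
obtained by moving the coefficients of $\mathfrak D$ and $R^{0'}R^{1'}$ to the left of the derivatives. This reordering produces first-order corrections via Lemma \ref{lem nabla}: for instance $R^{0'}R^{1'}=\sum z_A^{0'}z_B^{1'}\nabla_A^{0'}\nabla_B^{1'}$ exactly, since $\nabla_A^{0'}z_B^{1'}=0$.

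The second step is to check the divergence structure
$$\rho^{-2n-1}Lv=\sum_{A,B}\nabla_A^{0'}\bigl(\rho^{-2n-1}M_{AB}\nabla_B^{1'}v\bigr).$$
Expanding the right side gives two kinds of terms:
\begin{itemize}
\item $\sum_A\nabla_A^{0'}M_{AB}=z_B^{1'}\cdot(\text{const})-2n\,z_B^{1'}$ type contributions, computed from $\nabla_A^{0'}z_A^{0'}=1$ summed over $2n$ indices;
\item $\nabla_A^{0'}\rho^{-2n-1}=(2n+1)\rho^{-2n-2}\nabla_A^{0'}|\mathbf q|^2$, with $\nabla_A^{0'}|\mathbf q|^2=\sum_C J_A^C z_C^{1'}$ by \eqref{z jab}.
\end{itemize}
Contracting with $M_{AB}$ and using \eqref{z jab} together with $\sum_A z_A^{0'}J_A^C z_C^{1'}=|\mathbf q|^2$, the $\rho^{-1}$ factors should cancel. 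What remains is a first-order term proportional to $R^{1'}v$, and it must match the first-order part $R^{0'}+R^{1'}$ of $L$ after symmetrization. If the pure divergence identity fails by a first-order term, I would instead aim for the weaker statement that $u\,\rho^{-2n-1}Lv-v\,\rho^{-2n-1}Lu$ is a divergence. In that combination the leftover first-order terms in $R^{0'}, R^{1'}$ should cancel by antisymmetry. This bookkeeping is the main obstacle, and I would verify it first on radial functions $v=v(|\mathbf q|^2)$ to fix the constants.

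With the divergence form established, the third step is to apply \eqref{int by part d} on $B_r^{4n}$ to $\int u\nabla_A^{0'}(\rho^{-2n-1}M_{AB}\nabla_B^{1'}v)$. The interior term is $-\int\rho^{-2n-1}M_{AB}\nabla_A^{0'}u\,\nabla_B^{1'}v$. This term is not manifestly symmetric in $u,v$, because $M$ pairs $0'$ with $1'$. I will show it is symmetric by writing the form, as in the usual Green argument, through $\triangle_0=4\sum J_A^B\nabla_A^{0'}\nabla_B^{1'}=4\sum J_A^B\nabla_A^{1'}\nabla_B^{0'}$, using that $\mathfrak D-R^{0'}R^{1'}$ is real (it equals its conjugate under $z^{0'}\leftrightarrow z^{1'}$ by \eqref{z jab}). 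It therefore cancels in $u\triangle v-v\triangle u$.

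Finally, the boundary term is $\sum_{A,B} u\,\rho^{-2n-1}M_{AB}\nabla_B^{1'}v\,\nabla_A^{0'}\chi_r$, where $\nabla_A^{0'}\chi_r=-\frac1{2r}\sum_C J_A^Cz_C^{1'}$. We have $\sum_{A,C}J_A^C z_C^{1'}J_A^B$-type contractions plus $\sum z_A^{1'}J_A^Cz_C^{1'}=0$ and $\sum z_A^{0'}J_A^Cz_C^{1'}=|\mathbf q|^2$. With these, the contraction gives $\sum_B M_{AB}z^{1'}\cdots\nabla_B^{1'}=(1-r^2)R^{1'}$ up to sign. The boundary integrand thus becomes a constant times $r^{-1}(1-r^2)^{-2n}uR^{1'}v$. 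Converting $d\sigma_r=r^{4n-1}\cdot|S^{4n-1}|\,d\sigma$ and tracking the normalization yields the stated form $2(uR^{0'}v-vR^{1'}u)r^{4n-2}(1-r^2)^{-2n}$. The appearance of $R^{0'}$ on $v$ versus $R^{1'}$ on $u$ comes from choosing, for the $v$-term, the ordering $\nabla^{1'}\nabla^{0'}$ in the divergence and the opposite ordering for $u$. I expect the constant $2$ to be confirmed by testing $u=1$, $v=|\mathbf q|^2$ against both sides.
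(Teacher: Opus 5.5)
\textbf{Gap.} The way you eliminate the interior terms does not work. Carry out your Step 2 with $M_{AB}=J_A^B+z_A^{1'}z_B^{0'}-z_A^{0'}z_B^{1'}$ (the paper's $h_{AB}$). One finds $\sum_A\nabla_A^{0'}M_{AB}=(1-2n)z_B^{1'}$ and $\sum_A M_{AB}\nabla_A^{0'}|\mathbf q|^2=\rho\, z_B^{1'}$, hence
\[
\sum_{A,B}\nabla_A^{0'}\bigl(\rho^{-2n-1}M_{AB}\nabla_B^{1'}v\bigr)=\rho^{-2n-1}\bigl(Lv-Xv\bigr),\qquad X:=R^{0'}-R^{1'}.
\]
This causes three problems.
\begin{itemize}
\item The leftover in $u\rho^{-2n-1}Lv-v\rho^{-2n-1}Lu$ is $\rho^{-2n-1}(uXv-vXu)$. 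It is antisymmetric in $(u,v)$, so it does not cancel in the antisymmetrized expression; it doubles.
\item The bilinear term $Q(u,v):=\sum_{A,B}M_{AB}\nabla_A^{0'}u\,\nabla_B^{1'}v$ is not symmetric. At $\mathbf q=\mathbf 0$, where $M=J$, the choice $u=x_0$, $v=x_1$ gives $Q(u,v)=\mathbf i/4$ but $Q(v,u)=-\mathbf i/4$. Reality of $\frac14\triangle_0+\mathfrak D-R^{0'}R^{1'}$ only determines $Q(u,v)+Q(v,u)$. Also, the identity $\sum J_A^B\nabla_A^{0'}\nabla_B^{1'}=\sum J_A^B\nabla_A^{1'}\nabla_B^{0'}$ that you invoke is off by a sign, since $J$ is antisymmetric.
\item The two interior terms you discard add up to the nonzero flux $\frac{1}{2r}(1-r^2)^{-2n}\int_{S_r^{4n-1}}u\,Xv\,d\sigma_r$. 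Dropping them leaves your boundary term $uR^{1'}v-vR^{1'}u$, which is wrong. For real $u=x_1$, $v=x_0$ one has $Xv=\mathbf i x_1$, so the discrepancy is purely imaginary and nonzero, while the left-hand side is real.
\end{itemize}

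\textbf{Fix.} Use the mixed ordering from your last sentence consistently:
\[
\rho^{-2n-1}Lv=\sum_{A,B}\nabla_B^{1'}\bigl(\rho^{-2n-1}M_{AB}\nabla_A^{0'}v\bigr)-\rho^{-2n-1}Xv,\qquad \rho^{-2n-1}Lu=\sum_{A,B}\nabla_A^{0'}\bigl(\rho^{-2n-1}M_{AB}\nabla_B^{1'}u\bigr)+\rho^{-2n-1}Xu.
\]
Then integrate each once by parts.
\begin{itemize}
\item The two second-order bilinear terms are literally the same expression $\rho^{-2n-1}\sum M_{AB}\nabla_B^{1'}u\,\nabla_A^{0'}v$, so no symmetry is needed.
\item The first-order remainders combine to $-\rho^{-2n-1}X(uv)$. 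This integrates to zero over $B_r^{4n}$ because $X$ is divergence free ($\sum_A\nabla_A^{0'}z_A^{0'}=\sum_A\nabla_A^{1'}z_A^{1'}=2n$), $X\rho=0$, and $X$ is tangent to $S_r^{4n-1}$.
\item The boundary terms are exactly $\frac{1}{2r}(1-r^2)^{-2n}\int_{S_r^{4n-1}}(uR^{0'}v-vR^{1'}u)\,d\sigma_r$. Multiplied by $4$, this is the right-hand side of the statement.
\end{itemize}

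This is the paper's argument in different bookkeeping. The paper integrates $\Sigma_1(u,v)$ by parts twice, first in $\nabla_B^{1'}$ and then in $\nabla_A^{0'}$. It obtains $\Sigma_1(v,u)+2\int\rho^{-2n-1}(vR^{1'}u-uR^{0'}v)\,dV$ plus the same boundary terms. It then uses $(R^{0'}-R^{1'})\rho=0=(R^{0'}-R^{1'})\chi_r$ to absorb the first-order terms into $\Sigma_2$.

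One caution on constants. With Lebesgue $dV$, your test $u=1$, $v=|\mathbf q|^2$ produces the stated right-hand side multiplied by $|S^{4n-1}|=4n\omega_{4n}$. The factor $2$ in the statement relies on the paper's convention $d\sigma_r=r^{4n-1}d\sigma$.
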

\begin{proof}
By Proposition \ref{invariant laplace} and \eqref{z jab2}, the operator $\triangle$ can be rewritten as
\begin{equation}\begin{aligned}\label{rewritten laplcaian}
\triangle=4(1-|\mathbf{q}|^2)\sum_{A,B}\left(h_{AB}\nabla_{A}^{0'}\nabla_{B}^{1'}+R^{0'}+R^{1'}\right),
\end{aligned}\end{equation}
where $h_{AB}=:J_A^B-z_{A}^{0'}z_{B}^{1'}+z_{A}^{1'}z_{B}^{0'}$. Write
\begin{equation}\begin{aligned}\label{green pro}
\frac14\int_{B^{4n}_r}u\triangle vd\widetilde{V}
=&\sum_{A,B}\int_{B^{4n}_r}uh_{AB}\nabla_{B}^{1'}\nabla_{A}^{0'}v\rho^{-2n-1}dV+\int_{B^{4n}_r}u(R^{0'}+R^{1'})v\rho^{-2n-1}dV,\\
=:&\Sigma_1(u,v)+\Sigma_2(u,v).
\end{aligned}\end{equation}
Here $\nabla_{B}^{1'}$ commutes $\nabla_{A}^{0'}$ as differential operators of constant coefficients.
By integration by parts twice, we get
\begin{equation}\begin{aligned}\label{S1+S2}
\Sigma_1(u,v)
=&-\sum_{A,B}\int_{B^{4n}_r}\bigg(h_{AB}\nabla_{B}^{1'} u\nabla_{A}^{0'}v\rho^{-2n-1}+u\nabla_{B}^{1'}h_{AB}\nabla_{A}^{0'}v \rho^{-2n-1}\\
&\qquad\qquad\quad+uh_{AB}\nabla_{A}^{0'}v\nabla_{B}^{1'} \rho^{-2n-1}\bigg)dV+\int_{S^{4n-1}_r} uh_{AB}\nabla_{A}^{0'}v\nabla_{B}^{1'} \chi_r\cdot\rho^{-2n-1}d\sigma_r\\
=&\sum_{A,B}\int_{B^{4n}_r}\bigg(v h_{AB}\nabla_{A}^{0'}\nabla_{B}^{1'}u-v \left(1-\delta_{AB}\right) z_{B}^{1'}\nabla_{B}^{1'}u-(2n+1)\rho^{-1}v\cdot h_{AB}\nabla_{B}^{1'}u\nabla_{A}^{0'} \rho\\
&\qquad\qquad\;+ u \left(1-\delta_{AB} \right)z_{A}^{0'}\nabla_{A}^{0'}v+(2n+1)\rho^{-1}u\cdot h_{AB}\nabla_{A}^{0'}v\nabla_{B}^{1'} \rho\bigg)\rho^{-2n-1}dV\\
&+\sum_{A,B}\int_{S_r^{4n-1}} \left(u\cdot h_{AB}\nabla_{A}^{0'}v\nabla_{B}^{1'}\chi_r-v\cdot h_{AB}\nabla_{B}^{1'}u\nabla_{A}^{0'} \chi_r\right)\rho^{-2n-1}d\sigma_r\\
=:&S_1+S_2.
\end{aligned}\end{equation}
Here we have used
\begin{equation}\begin{aligned}
\nabla_{A}^{0'}h_{AB}&=(-1+\delta_{AB})z_{B}^{1'},\qquad \nabla_{B}^{1'}h_{AB}&=(-1+\delta_{AB})z_{A}^{0'}.
\end{aligned}\end{equation}
Noting that
\begin{equation}\begin{aligned}\label{nabla rho}
\nabla_{A}^{0'}\rho=-\nabla_{A}^{0'}|\mathbf{q}|^2=-\sum_{C}J_A^Cz_C^{1'},\qquad \nabla_{B}^{1'}\rho=-\nabla_{B}^{1'}|\mathbf{q}|^2=-\sum_{D}z_D^{0'}J_D^B
\end{aligned}\end{equation}
by definition \eqref{z jab} and Lemma \ref{lem nabla}, we find that
\begin{equation}\begin{aligned}\label{nabla habu}
\sum_{A,B}h_{AB}\nabla_{B}^{1'}u\nabla_{A}^{0'} \rho&=-\sum_{A,B,C}(J_A^B-z_{A}^{0'}z_{B}^{1'}+z_{A}^{1'}z_{B}^{0'})J_A^Cz_C^{1'}\nabla_{B}^{1'}u\\
&=-\sum_{A,B}(1-|\mathbf{q}|^2)z_{B}^{1'}\nabla_{B}^{1'}u=-\rho R^{1'}u.
\end{aligned}\end{equation}
Here we have used
\begin{equation}\begin{aligned}\label{5.13}
J_A^C=-J_C^A,\qquad J_A^CJ_C^B=\left(J^2\right)_A^B=-\delta_A^B,
\end{aligned}\end{equation}
by \eqref{nabla rho} and definition \eqref{z jab}. Similarly,
\begin{equation}\begin{aligned}\label{nabla habv}
\sum_{A,B}h_{AB}\nabla_{A}^{0'}v\nabla_{B}^{1'} \rho=-\rho R^{0'}v.
\end{aligned}\end{equation}
Substitute \eqref{nabla habu} and \eqref{nabla habv} into $S_1$ to get
\begin{equation}\begin{aligned}\label{S1 la}
S_1=\Sigma_1(v,u)+2\int_{B^{4n}_r}(v R^{1'}u-u R^{0'}v)\rho^{-2n-1}dV.
\end{aligned}\end{equation}
Moreover, it is easy to see that $$(R^{0'}-R^{1'})\rho\equiv 0\equiv(R^{0'}-R^{1'})\chi_r$$ by \eqref{nabla rho}. By integration by parts twice for the integral in \eqref{S1 la}, we get
\begin{equation*}\begin{aligned}
S_1=&\Sigma_1(v,u)+\int_{B^{4n}_r}(v R^{1'}u-u R^{0'}v)\rho^{-2n-1}dV+\int_{S^{4n-1}_r}uv\rho^{-2n-1}(R^{1'}-R^{0'})\chi_r d\sigma_r\\
&\qquad\quad\;\;-\int_{B^{4n}_r}\left(uR^{1'}v-vR^{0'}u-(2n+1)uv\rho^{-1}(R^{1'}-R^{0'})\rho\right)\rho^{-2n-1}dV\\
=&\Sigma_1(v,u)+\int_{B^{4n}_r}\left(v\left(R^{0'}+R^{1'}\right)u-u\left(R^{0'}+R^{1'}\right)v\right)d\widetilde{V}.
\end{aligned}\end{equation*}
Similarly, by \eqref{nabla habu} and \eqref{nabla habv},
\begin{equation}\begin{aligned}
\sum_{A,B}h_{AB}\nabla_{B}^{1'}u\nabla_{A}^{0'} \chi_r=-\frac{1}{2r}\rho R^{1'}u,\qquad\sum_{A,B}h_{AB}\nabla_{A}^{0'}v\nabla_{B}^{1'} \chi_r=-\frac{1}{2r}\rho R^{0'}v,
\end{aligned}\end{equation}
and $d\sigma_r=r^{4n-1}d\sigma$ in terms of spherical coordinates, we get
\begin{equation*}\begin{aligned}
S_2=\frac12\int_{S^{4n-1}}(uR^{0'}v-vR^{1'}u )\cdot(1-r^2)^{-2n}r^{4n-2}d\sigma.
\end{aligned}\end{equation*}
Substituting $S_1$ and $S_2$ above into \eqref{S1+S2} and then to \eqref{green pro}, we get
\begin{equation}\begin{aligned}
\frac14\int_{B^{4n}_r}u\triangle vd\widetilde{V}=\Sigma_1(v,u)+\Sigma_2(v,u)+\frac12\int_{S^{4n-1}}(uR^{0'}v-vR^{1'}u )\cdot\frac{r^{4n-2}d\sigma}{(1-r^2)^{2n}}.
\end{aligned}\end{equation}
\eqref{green pro} with $u$ and $v$ exchanged gives us
\begin{equation}\begin{aligned}
\frac14\int_{B^{4n}_r}v\triangle ud\widetilde{V}=\Sigma_1(v,u)+\Sigma_2(v,u).
\end{aligned}\end{equation}
So their difference gives us \eqref{eq green}. The proposition is proved.
\end{proof}

\subsection{The maximum principle and solutions to the Dirichlet problem for $\mathcal{QM}$-invariant Laplace equation}
To show the uniqueness of solutions, we need the maximum principle.
\begin{thm}\label{max}
Suppose $\Omega$ is an open subset of $B^{4n}$ and
that $f \in C^2(\Omega)$ is $\mathcal{QM}$-subharmonic on $\Omega$ and continuous on $\overline{\Omega}$ . If $f \leq 0$ on $\partial \Omega$,
then $f \leq 0$ in $\Omega$.
\end{thm}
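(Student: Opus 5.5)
The plan is the classical perturbation argument for the weak maximum principle, applied to the explicit form of $\triangle$ in Proposition \ref{invariant laplace}. Here $\mathcal{QM}$-subharmonic means $\triangle f\geq 0$. By \eqref{rewritten laplcaian},
\[
\triangle=4(1-|\mathbf{q}|^2)\Bigl(\sum_{A,B}h_{AB}\nabla_A^{0'}\nabla_B^{1'}+R^{0'}+R^{1'}\Bigr).
\]
Since $R^{0'}+R^{1'}$ is the Euler operator $\sum_k x_k\partial_{x_k}$, the operator $\triangle$ is a real second order operator. Its coefficient factor $1-|\mathbf{q}|^2$ is strictly positive on $B^{4n}$, and it has no zeroth order term.

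The key structural input is that at every point $\mathbf{a}\in B^{4n}$ the principal symbol is positive semidefinite. The cleanest route uses the definition \eqref{def inv op} rather than the matrix $h_{AB}$. At a point $\mathbf{a}$ where $f$ has an interior local maximum, $f\circ\varphi_{\mathbf{a}}$ has a local maximum at $\mathbf{0}$, because $\varphi_{\mathbf{a}}$ is a diffeomorphism with $\varphi_{\mathbf{a}}(\mathbf{0})=\mathbf{a}$. Hence
\[
\triangle f(\mathbf{a})=\triangle_0(f\circ\varphi_{\mathbf{a}})(\mathbf{0})\leq 0.
\]
This is the first step, and it sidesteps any worry about degeneracy of $h_{AB}$.

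Second, we perturb. Put $f_\varepsilon=f+\varepsilon|\mathbf{q}|^2$. Since $\triangle$ commutes with unitary maps, I would compute $\triangle|\mathbf{q}|^2$ from \eqref{ex form}. We have $\triangle_0|\mathbf{q}|^2=8n$ and $R^{0'}|\mathbf{q}|^2=R^{1'}|\mathbf{q}|^2=|\mathbf{q}|^2$ by \eqref{z jab2}. The term $\mathfrak{D}|\mathbf{q}|^2$ is computed using $\nabla_B^{1'}|\mathbf{q}|^2=\sum_D z_D^{0'}J_D^B$ together with \eqref{z jab}. The result should be a positive multiple of $(1-|\mathbf{q}|^2)$, roughly $8(1-|\mathbf{q}|^2)(2n+\cdots)$.

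A safer alternative avoids this computation. The point $\mathbf{0}$ is a strict local minimum of $|\varphi_{\mathbf{a}}(\cdot)|^2$ composed with $\varphi_{\mathbf{a}}$, that is, of $g=|\cdot|^2\circ\varphi_{\mathbf{a}}$, which is nonconstant. Its Hessian at $\mathbf{0}$ is $D\varphi_{\mathbf{a}}(\mathbf{0})^tD\varphi_{\mathbf{a}}(\mathbf{0})$ plus a term involving $\mathbf{a}$. More robustly, one can instead use $\triangle_0(|\varphi_{\mathbf{a}}(\mathbf{q})-\mathbf{a}|^2)(\mathbf{0})=2\|D\varphi_{\mathbf{a}}(\mathbf{0})\|^2>0$ by Lemma \ref{1st differ}, but that function depends on $\mathbf{a}$. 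So I would commit to verifying directly that $\triangle|\mathbf{q}|^2>0$ on $B^{4n}$. I expect this to be the main, albeit routine, computational step: one must check that the $\mathfrak{D}$ and $R^{0'}R^{1'}$ contributions, about $|\mathbf{q}|^2$ each up to sign, do not overwhelm $2n+2$.

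Third, conclude by contradiction. Then $\triangle f_\varepsilon>0$ in $\Omega$, so by the first step $f_\varepsilon$ has no interior maximum in $\Omega$. Since $\Omega\subset B^{4n}$ is bounded and $f_\varepsilon$ is continuous on $\overline{\Omega}$, its maximum is attained on $\partial\Omega$. There $f_\varepsilon\leq\varepsilon$, so $f\leq f_\varepsilon\leq\varepsilon$ in $\Omega$, and letting $\varepsilon\to0$ gives $f\leq0$.
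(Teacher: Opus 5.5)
Your proposal is correct and is essentially the paper's argument. You perturb to $f+\varepsilon|\mathbf{q}|^2$, note that at an interior maximum $\mathbf{a}$ one has $\triangle_0\bigl((f+\varepsilon|\mathbf{q}|^2)\circ\varphi_{\mathbf{a}}\bigr)(\mathbf{0})\le 0$, which contradicts $\triangle(f+\varepsilon|\mathbf{q}|^2)>0$, and then let $\varepsilon\to 0$.

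The computation you left open comes out exactly. In \eqref{ex form}, $\frac14\triangle_0|\mathbf{q}|^2=2n$, while the $\mathfrak{D}$, $-R^{0'}R^{1'}$ and $R^{0'}+R^{1'}$ terms give $-|\mathbf{q}|^2$, $-|\mathbf{q}|^2$ and $2|\mathbf{q}|^2$, which cancel. Hence $\triangle|\mathbf{q}|^2=8n(1-|\mathbf{q}|^2)>0$, not $8(1-|\mathbf{q}|^2)(2n+\cdots)$, exactly as in the paper.
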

\begin{proof}
Set $h_{\varepsilon}(\mathbf{q}):=f(\mathbf{q})+\varepsilon|\mathbf{q}|^2$. Then, $h_{\varepsilon}(\mathbf{q})\leq \varepsilon$ on $\partial \Omega$. By the expression \eqref{rewritten laplcaian} of $\triangle$, we have
\begin{equation}\begin{aligned}\label{tri h}
\triangle h_{\varepsilon}(\mathbf{q})&=\triangle f(\mathbf{q})+4\varepsilon(1-|\mathbf{q}|^2)\left[\sum_{A,B}(J_A^B-z_{A}^{0'}z_{B}^{1'}+z_{A}^{1'}z_{B}^{0'})\nabla_A^{0'}\nabla_{B}^{1'}+R^{0'}+R^{1'}\right]|\mathbf{q}|^2\\
&=\triangle f(\mathbf{q})+4\varepsilon(1-|\mathbf{q}|^2)\left[\sum_{A,B}(J_A^B-z_{A}^{0'}z_{B}^{1'}+z_{A}^{1'}z_{B}^{0'})J_A^B+2|\mathbf{q}|^2\right]\\
&=\triangle f(\mathbf{q})+8\varepsilon n(1-|\mathbf{q}|^2)>0,
\end{aligned}\end{equation}
for all $\mathbf{q}\in\Omega$, by \eqref{z jab} and \eqref{5.13}.

Now suppose that $h_{\varepsilon}(\mathbf{a})>\varepsilon$ at some point $\mathbf{a}\in\Omega$. Since $h_{\varepsilon}\leq \varepsilon$ on $\partial\Omega$. There must exist some maximum point $\mathbf{a}\in\Omega$. So $h_{\varepsilon}\circ \varphi_{\mathbf{a}}$ has local maximum at point $\mathbf{0}$. But by \eqref{tri h}, $\triangle_0(h_{\varepsilon}\circ \varphi_{\mathbf{a}})(\mathbf{0})=\triangle h_{\varepsilon}(\mathbf{q})>0$. This is impossible for $h_{\varepsilon}\circ \varphi_{\mathbf{a}}$ has a local maximum at $0$.
Thus $h_{\varepsilon}(\mathbf{q})\leq\varepsilon$ for all $\mathbf{q}\in\Omega$. The result follows by letting $\varepsilon\rightarrow 0$.
\end{proof}
\begin{proof}[Proof of Theorem \ref{posson}]
For $g\in C^2([0,1])$, the equation $\triangle g(|\mathbf{q}|^2)=0$ is equivalent to
\begin{equation}\begin{aligned}
0=r^2(1-r^2)g''(r^2)+2ng'(r^2),
\end{aligned}\end{equation}
by \eqref{g eq} with $a=b=0$ for radial function $g$. It has a solution
\begin{equation}\begin{aligned}\label{g}
g(t)=\int_{t}^{1}\left(\frac{1-s}{s}\right)^{2n}ds.
\end{aligned}\end{equation}
For fixed $\mathbf{a}\in B^{4n}_r$, let $\Omega=B_r^{4n} \setminus \varphi_{\mathbf{a}}(B^{4n}(\mathbf{0},\epsilon))$ with $\epsilon$ sufficiently small, and let
\begin{equation}\begin{aligned}\label{Ga}
G_{\mathbf{a}}(\mathbf{q})=g(|\varphi_{\mathbf{a}}(\mathbf{q})|^2),
\end{aligned}\end{equation}
which is smooth Green function on $B^{4n} \setminus \{\mathbf{a}\}$ with singularity at $\mathbf{0}$. We also denote $G(\mathbf{q}):=g(|\mathbf{q}|^2)$.
To apply Green formula, we choose an extension $\widehat{v}\in C^2(B_r^{4n})$ such that $\widehat{v}=G_{\mathbf{a}}$ on $\overline{\Omega}$.
Thus
\begin{equation}\begin{aligned}\label{total}
0=\int_{\Omega}(u\triangle \widehat{v}-\widehat{v}\triangle u) d\widetilde{V}=\int_{B^{4n}_r}(u\triangle \widehat{v}-\widehat{v}\triangle u) d\widetilde{V}-\int_{\varphi_{\mathbf{a}}(B^{4n}(\mathbf{a},\epsilon))}(u\triangle \widehat{v}-\widehat{v}\triangle u) d\widetilde{V}.
\end{aligned}\end{equation}
Applying the Green formula in Proposition \ref{Green} to $u\circ \varphi_{\mathbf{a}}$ and $\widehat{v}\circ \varphi_{\mathbf{a}}$ on $B^{4n}(\mathbf{0},\epsilon)$, we get
\begin{equation}\begin{aligned}\label{part ii}
\int_{\varphi_{\mathbf{a}}(B^{4n}(\mathbf{0},\epsilon))}(u\triangle\widehat{v}-&\widehat{v}\triangle u) d\widetilde{V}
=\int_{B^{4n}(\mathbf{0},\epsilon)}\left(u\circ\varphi_{\mathbf{a}}\cdot(\triangle \widehat{v})\circ\varphi_{\mathbf{a}}-\widehat{v}\circ\varphi_{\mathbf{a}}\cdot(\triangle u)\circ\varphi_{\mathbf{a}}\right) d\widetilde{V}\\
=&\int_{B^{4n}(\mathbf{a},\epsilon)}\left(u\circ\varphi_{\mathbf{a}}\cdot\triangle (\widehat{v}\circ\varphi_{\mathbf{a}})-\widehat{v}\circ\varphi_{\mathbf{a}}\cdot\triangle (u\circ\varphi_{\mathbf{a}})\right) d\widetilde{V}\\
=&2\int_{S^{4n-1}}\left.\left(\widehat{v}\circ\varphi_{\mathbf{a}}\cdot R^{1'}(u\circ\varphi_{\mathbf{a}})-u\circ\varphi_{\mathbf{a}}\cdot R^{0'}(\widehat{v}\circ\varphi_{\mathbf{a}})\right)\right|_{\epsilon\zeta}\frac{\epsilon^{4n-2}d\sigma(\zeta)}{(1-\epsilon^2)^{2n}}\\
=&2\int_{S^{4n-1}}\left.\left(G\cdot R^{1'}(u\circ\varphi_{\mathbf{a}})-u\circ\varphi_{\mathbf{a}}\cdot R^{0'}G\right)\right|_{\epsilon\zeta}\frac{\epsilon^{4n-2}d\sigma(\zeta)}{(1-\epsilon^2)^{2n}},
\end{aligned}\end{equation}
by taking the transformation $\mathbf{q}\rightarrow \varphi_{\mathbf{a}}(\mathbf{q})$, and using the invariance of $\triangle$ and the invariance of the measure $d\widetilde{V}$ under $\varphi_{\mathbf{a}}$. Here in the last identity, $\widehat{v}\circ\varphi_{\mathbf{a}}=G_{\mathbf{a}}\circ\varphi_{\mathbf{a}}=G$ on $S^{4n-1}(\mathbf{0},\epsilon)$.
Now substituting \eqref{part ii} into the second term in the right hand side of \eqref{total} and applying Green formula in Proposition \ref{Green} to the first term, we get
\begin{equation*}\begin{aligned}
\int_{S^{4n-1}}\left.(uR^{0'}G-G R^{1'}u)\right|_{r\zeta}\frac{r^{4n-2}d\sigma(\zeta)}{(1-r^2)^{2n}}
=\int_{S^{4n-1}}\left.\left(u\circ\varphi_{\mathbf{a}} R^{0'}G -G R^{1'}(u\circ\varphi_{\mathbf{a}})\right)\right|_{\epsilon\zeta}\frac{\epsilon^{4n-2}d\sigma(\zeta)}{(1-\epsilon^2)^{2n}}.
\end{aligned}\end{equation*}
Note that
\begin{equation*}\begin{aligned}
\int_{S^{4n-1}}\left.G\circ R^{1'}(u\circ\varphi_{\mathbf{a}})\right|_{\epsilon\zeta}\frac{\epsilon^{4n-2}d\sigma(\zeta)}{(1-\epsilon^2)^{2n}}\rightarrow 0,
\end{aligned}\end{equation*}
as $\epsilon\rightarrow 0$, by $g(\epsilon^2)\approx \epsilon^{2-4n}$ and $R^{1'}(u\circ\varphi_{\mathbf{a}})\rightarrow 0$, since coefficients of $R^{1'}$ tends to zero. Noting that
\begin{equation*}\begin{aligned}
R^{0'}G(\epsilon\zeta)=&g'(\epsilon^2)\epsilon^2=-\frac{(1-\epsilon^2)^{2n}}{\epsilon^{4n-2}},
\end{aligned}\end{equation*}
by \eqref{z jab}, we get
\begin{equation*}\begin{aligned}
&\int_{S^{4n-1}}\left.u\circ\varphi_{\mathbf{a}}\cdot R^{0'}G\right|_{\epsilon\zeta} \frac{\epsilon^{4n-2}d\sigma(\zeta)}{(1-\epsilon^2)^{2n}}\rightarrow-u(\mathbf{a}),
\end{aligned}\end{equation*}
as $\varepsilon\rightarrow 0$. Consequently,
$$u(\mathbf{a})=-\int_{S^{4n-1}}\left.(uR^{0'}G-GR^{1'}u)\right|_{r\zeta}\frac{r^{4n-2}d\sigma(\zeta)}{(1-r^2)^{2n}}.$$

  On the other hand, $G(\mathbf{q})\approx (1-|\mathbf{q}|^2)^{2n+1}$ as $\mathbf{q}\rightarrow 1$ by definition \eqref{Ga}, we have
\begin{equation*}\begin{aligned}
\int_{S^{4n-1}}\left.GR^{1'}u\right|_{r\zeta}\frac{r^{4n-2}d\sigma(\zeta)}{(1-r^2)^{2n}}\rightarrow0,
\end{aligned}\end{equation*}
as $r\rightarrow 1$. Thus
\begin{equation}\begin{aligned}\label{ua}
u(\mathbf{a})=-\lim_{r\rightarrow 1}\int_{S^{4n-1}}\left.uR^{0'}G\right|_{r\zeta}\frac{r^{4n-2}d\sigma(\zeta)}{(1-r^2)^{2n}}.
\end{aligned}\end{equation}
Since
\begin{equation}\begin{aligned}\label{psi eq}
1-|\varphi_{\mathbf{a}}(\mathbf{q})|^2=\frac{(1-|\mathbf{a}|^2)(1-|\mathbf{q}|^2)}{|1-\mathbf{a}^*\mathbf{q}|^2},
\end{aligned}\end{equation}
we have
\begin{align*}
R^{0'} G(\mathbf{q}) &= R^{0'} \left( g\left( |\varphi_{\mathbf{a}}(\mathbf{q})|^2 \right) \right) \\
&= g'(|\varphi_{\mathbf{a}}(\mathbf{q})|^2) \cdot R^{0'}\left( \frac{(1 - |\mathbf{a}|^2)(1 - |\mathbf{q}|^2)}{|1 - \mathbf{a}^* \mathbf{q}|^2} \right),\\
&= \left( \frac{(1 - |\mathbf{a}|^2)(1 - |\mathbf{q}|^2)}{|1 - \mathbf{a}^* \mathbf{q}|^2 - (1 - |\mathbf{a}|^2)(1 - |\mathbf{q}|^2)} \right)^{2n}   \left(\frac{-|\mathbf{q}|^2 (1 - |\mathbf{a}|^2)}{|1 - \mathbf{a}^* \mathbf{q}|^2}+O(1-|\mathbf{q}|^2)\right).
\end{align*}
Substitute this identity to \eqref{ua} to get
\begin{equation}\begin{aligned}\label{poisson ua}
u(\mathbf{a})
&= -\lim_{r \to 1} \int_{S^{4n-1}}u(r\zeta) \left( \frac{(1 - |\mathbf{a}|^2)(1 - r^2)}{|1 - \mathbf{a}^* r\zeta |^2 - (1 - |\mathbf{a}|^2)(1 - r^2)} \right)^{2n} \\
&\qquad\qquad\qquad\qquad\quad \cdot  \left( \frac{-r^2(1 - |\mathbf{a}|^2)}{|1 - \mathbf{a}^* r\zeta |^2}+O(1 - r^2) \right)\frac{r^{4n-2}}{(1 - r^2)^{2n}} d\sigma(\zeta) \\
&= \int u(\zeta) \cdot \left( \frac{1 - |\mathbf{a}|^2}{|1-\mathbf{a}^*\zeta|^2} \right)^{2n+1} d\sigma(\zeta).
\end{aligned}\end{equation}

For the uniqueness, let $\widetilde{u}$ be another solution to the Dirichlet problem \eqref{posson}. Apply the maximum principle in Theorem \ref{max} to $P[\varphi]-\widetilde{u}$, which is $\mathcal{QM}$-harmonic and equal to $0$ on the boundary. So we get $P[\varphi]-\widetilde{u}\equiv0$. The theorem is proved.
\end{proof}

\section{The $\mathcal{QM}$-Poisson integrals and its nontangential convergence}
\subsection{$\mathcal{QM}$-harmonic mean value formula}
Recall that the standard Laplacian has the following well known expression \cite[theorem 4.1.3]{Rudin}:
\begin{equation*}\begin{aligned}
\triangle_0 h(0)=\lim_{r\rightarrow 0}\frac{8n}{r^2}\int_{S^{4n-1}}\left(h(r\zeta)-h(0)\right)d\sigma(\zeta),
\end{aligned}\end{equation*}
for $h\in C^2(B^{4n})$. By substituting $h=f\circ\varphi_{\mathbf{a}}$, for $f\in C^2(B^{4n})$ and $\mathbf{a}\in B^{4n}$, we get
\begin{equation*}\begin{aligned}\label{inter mean v}
\triangle f(\mathbf{a})=\lim_{r\rightarrow 0}\frac{8n}{r^2}\int_{S^{4n-1}}\left(f(\varphi_{\mathbf{a}}(r\zeta))-f(\mathbf{a})\right)d\sigma(\zeta),
\end{aligned}\end{equation*}
where $\sigma$ is defined in Theorem \ref{posson}.

For $f\in C^2(B^{4n})$, define its {\it average} over the group Sp$(n)$ as
\begin{equation}\begin{aligned}\label{eq sharp}
f^{\sharp}(\mathbf{q}):=\int_{{\rm Sp}(n)} f(A\mathbf{q})dA,
\end{aligned}\end{equation}
for $\mathbf{q}\in B^{4n}$, where $dA$ is the Harr measure on Sp$(n)$.
Then, by Proposition \ref{inter mean v}, we have
\begin{equation*}\begin{aligned}
\triangle f^{\sharp}(\mathbf{a})&=\lim_{r\rightarrow 0}\frac{4n}{r^2}\int_{S^{4n-1}}(f^{\sharp}(\varphi_{\mathbf{a}}(r\zeta))-f^{\sharp}(\mathbf{a}))d\sigma(\zeta)\\
&=\lim_{r\rightarrow 0}\frac{4n}{r^2}\int_{S^{4n-1}}d\sigma(\zeta)\int_{{\rm Sp}(n)}(f(A\varphi_{\mathbf{a}}(r\zeta))-f(A\mathbf{a}))dA\\
&=\int_{{\rm Sp}(n)}\left(\lim_{r\rightarrow 0}\frac{4n}{r^2}\int_{S^{4n-1}}(f(A\varphi_{\mathbf{a}}(r\zeta))-f(A\mathbf{a}))d\sigma(\zeta)\right)dA\\
&=\int_{{\rm Sp}(n)}\triangle (f\circ A)(\mathbf{a})dA=\int_{{\rm Sp}(n)}(\triangle f)(A\mathbf{a})dA,
\end{aligned}\end{equation*}
where in the last identity we used the Sp$(n)$-invariance of $\triangle$ in Proposition \ref{m invariant}.
Thus, $\mathcal{QM}$-harmonic function $f$ satisfies $\triangle f^{\sharp}(\mathbf{a})=0$.

\begin{prop}
$\mathcal{QM}$-harmonic function $f$ on $B^{4n}$ satisfies
\begin{equation}\begin{aligned}\label{m harmonic mv}
f(\mathbf{0})=\int_{S^{4n-1}} f(r\zeta)d\sigma(\zeta).
\end{aligned}\end{equation}
\end{prop}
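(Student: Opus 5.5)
The plan is to reduce the mean value property to the radial averaging already set up just before the statement. Rather than averaging only over ${\rm Sp}(n)$, I will average $f$ over the whole group ${\rm Sp}(n){\rm Sp}(1)$, which acts transitively on each sphere $S^{4n-1}_r$. Define
\[
f^{\flat}(\mathbf{q}):=\int_{{\rm Sp}(n){\rm Sp}(1)} f(\mathbf{U}\mathbf{q}p^{*})\,dg .
\]
The computation preceding the statement (limit formula for $\triangle$ plus Proposition \ref{m invariant}) works verbatim for the full group: one interchanges the limit and the compact group integral, using uniform $C^2$ bounds on a compact neighbourhood of the orbit of $\mathbf{a}$. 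This gives $\triangle f^{\flat}=0$. Since the group acts transitively on spheres, $f^{\flat}(\mathbf{q})=g(|\mathbf{q}|^2)$ is radial. Moreover, uniqueness of the invariant probability measure gives
\[
g(r^2)=\int_{S^{4n-1}} f(r\zeta)\,d\sigma(\zeta)\quad\text{and}\quad g(0)=f(\mathbf{0}).
\]
Alternatively, one can bypass the group average and use the expansion of Theorem \ref{no boundary decomp}. Integrating \eqref{no boundary decomp eq} term by term over $S^{4n-1}$ is justified by the uniform convergence on $|\mathbf{q}|=r$. Every $h_{a,b}$ with $(a,b)\neq(0,0)$ is orthogonal to constants, since $\mathcal{H}_{0,0}$ consists of the constants and the decomposition in Theorem A is orthogonal. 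Only the $(0,0)$ term survives, and ${}_2F_1(0,-1,2n;t)\equiv1$. So the mean equals $h_{0,0}$, which is also the value at $\mathbf{0}$ because all other terms vanish at the origin (they are homogeneous of positive degree times a bounded radial factor). This route handles $n\ge2$. For $n=1$ I will use the analogous spherical harmonic expansion from \cite{Stoll}, or fall back on the radial argument below.

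The core of the radial route is to show that $g$ is constant. By \eqref{g eq} with $a=b=0$, $g$ satisfies $t(1-t)g''(t)+2n\,g'(t)=0$ on $(0,1)$, so
\[
g'(t)=C\bigl((1-t)/t\bigr)^{2n}.
\]
This is the derivative of the singular solution \eqref{g}. Since $f^{\flat}$ is $C^2$ near $\mathbf{0}$, $g'$ is bounded near $t=0$, which forces $C=0$. Hence $g\equiv g(0)=f(\mathbf{0})$, which is \eqref{m harmonic mv}.

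The main obstacle is the justification steps rather than the algebra. For the radial route these are the interchange of limit and group integral, and the claim that a $C^2$ radial function $G(|\mathbf{q}|^2)$ has $g$ regular enough at $t=0$ to exclude the singular solution. The second follows because $g'(t)$ is controlled by second derivatives of $f^{\flat}$ at the origin. For the expansion route, the only delicate point is checking that no $\mathcal{H}_{a,b}$ with $a+b>0$ contains constants, which is immediate from homogeneity of degree $a+b$.
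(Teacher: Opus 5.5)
Your radial route is essentially the paper's proof. The paper averages $f$ over ${\rm Sp}(n)$ alone (already transitive on spheres) and uses $\triangle f^{\sharp}=4(1-t)\bigl(t(1-t)\varphi''+2n\varphi'\bigr)$ to show the radial average is constant, then identifies $f^{\sharp}$ with the spherical mean via ${\rm Sp}(n)/{\rm Sp}(n-1)=S^{4n-1}$; you do the same with ${\rm Sp}(n){\rm Sp}(1)$ and uniqueness of the invariant measure, your explicit exclusion of $C((1-t)/t)^{2n}$ makes precise the paper's quick ``$v'=0$, so $\varphi'=0$'', and your expansion route via Theorem~\ref{no boundary decomp} is a valid but optional alternative for $n\ge 2$.
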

\begin{proof}

Since $f^{\sharp}$ is radial, we can write $f^{\sharp}(\mathbf{q})=\varphi(|\mathbf{q}|^2)$ for some $\varphi\in C^2(0,1)$. Then by \eqref{hyper eq},
\begin{equation}\begin{aligned}\label{qe 1}
\triangle f^{\sharp}(\mathbf{q})=4(1-r^2)(r^2(1-r^2)\varphi''(r^2)+2n\varphi'(r^2))=4r^{2-4n}(1-r^2)^{2n+2}v'(r^2),
\end{aligned}\end{equation}
if we denote $v(t):=t^{2n}(1-t)^{-2n}\varphi'(t)$. This together with $\triangle f^{\sharp}(\mathbf{q})=0$ implies $v'=0$ and so $\varphi'=0$.
Hence, $\varphi$ is a constant, since $\varphi$ must be continuous at $0$ by definition. Thus, $f^{\sharp}(\mathbf{0})=\lim_{r\rightarrow 0}f^{\sharp}(r(1,0,\dots,0))=f(\mathbf{0})$.
Since Sp$(n)/$Sp$(n-1)=S^{4n-1}$,
\begin{equation}\begin{aligned}\label{int S}
f(\mathbf{0})&=f^{\sharp}(\mathbf{0})=\int_{{\rm Sp}(n)} f(r A(1,0,\dots,0))dA=\int_{{\rm Sp}(n-1)}d\widetilde{A}\int_{S^{4n-1}} f(r\widetilde{A}\zeta)d\sigma(\zeta)\\
&=\int_{{\rm Sp}(n-1)}d\widetilde{A}\int_{S^{4n-1}} f(r\zeta)d\sigma(\zeta)=\int_{S^{4n-1}} f(r\zeta)d\sigma(\zeta),
\end{aligned}\end{equation}
where $d\widetilde{A}$ is the Harr measure on Sp$(n-1)$.
\end{proof}
\subsection{The $\mathcal{QM}$-Poisson kernel}
The $\mathcal{QM}$-Poisson kernel has the following invariant formula under $\mathcal{QM}$-transformations. For $\mathbf{a},\mathbf{q}\in B^{4n}$, $\zeta\in S^{4n-1}$, we have
\begin{equation}\begin{aligned}\label{P}
P(\varphi_{\mathbf{a}}(\mathbf{q}),\zeta)=P(\mathbf{q},\varphi_{\mathbf{a}}(\zeta))P(\mathbf{a},\zeta).
\end{aligned}\end{equation}
This is because
\begin{equation*}\begin{aligned}
P(\varphi_{\mathbf{a}}(\mathbf{q}),\zeta)&=\bigg(\frac{1-|\varphi_{\mathbf{a}}(\mathbf{q})|^2}{|1-\varphi_{\mathbf{a}}(\mathbf{q})^*\zeta|^2}\bigg)^{2n+1}=\bigg(\frac{(1-|\mathbf{a}|^2)(1-|\mathbf{q}|^2)}{|1-\mathbf{a}^*\mathbf{q}|^2|1-\varphi_{\mathbf{a}}(\mathbf{q})^*\zeta|^2}\bigg)^{2n+1}\\
&=\bigg(\frac{1-|\mathbf{q}|^2}{|1-\mathbf{q}^*\varphi_{\mathbf{a}}(\zeta)|^2}\frac{1-|\mathbf{a}|^2}{|1-\mathbf{a}^*\zeta|^2}\bigg)^{2n+1}=P(\mathbf{q},\varphi_{\mathbf{a}}(\zeta))P(\mathbf{a},\zeta),
\end{aligned}\end{equation*}
by using \eqref{psi eq} and
\begin{equation*}\begin{aligned}
|1-\varphi_{\mathbf{a}}(\mathbf{q})^*\zeta|^2&=\left|1-\left((\mathbf{a}-A\mathbf{q})(1-\mathbf{a}^*\mathbf{q})^{-1}\right)^{*}\zeta\right|^2\\
&=|1-\mathbf{q}^*\mathbf{a}|^{-2}|1-\mathbf{q}^*\mathbf{a}-(\mathbf{a}^*-\mathbf{q}^*\mathbf{A}^*)\zeta|^2\\
&=|1-\mathbf{q}^*\mathbf{a}|^{-2}|1-\mathbf{a}^*\zeta-\mathbf{q}^*(\mathbf{a}-\mathbf{A}\zeta)|^2\\
&=|1-\mathbf{q}^*\mathbf{a}|^{-2}|1-\mathbf{a}^*\zeta-\mathbf{q}^*\varphi_{\mathbf{a}}(\zeta)(1-\mathbf{a}^*\zeta)|^2\\
&=|1-\mathbf{q}^*\mathbf{a}|^{-2}|1-\mathbf{a}^*\zeta|^2|1-\mathbf{q}^*\varphi_{\mathbf{a}}(\zeta)|^2,\\
\end{aligned}\end{equation*}
where $\mathbf{A}=\mathbf{A}^*$.
\begin{prop}\label{lem pf}
The $\mathcal{QM}$-Poisson kernel $P$ in \eqref{def poisson k} satisfies \\
(1)\;For fixed $\zeta\in S^{4n-1}$, $ P(\cdot,\zeta)$ is $\mathcal{QM}$-harmonic on $B^{4n}$;\\
(2)\;$P(r\eta,\zeta)=P(r\zeta,\eta)$ for all $\zeta,\eta\in S^{4n-1}$ and $0<r<1$;\\
(3)\;$\int_{S^{4n-1}}P(\cdot,\zeta)d\sigma(\zeta)\equiv1$;\\
(4)\;For fixed $\eta\in S^{4n-1}$ and $\delta>0$, $\lim_{\mathbf{q}\rightarrow\eta}\int_{|\zeta-\eta|>\delta}P(\mathbf{q},\zeta)d\sigma(\zeta)=0$.
\end{prop}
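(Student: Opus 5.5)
For (1), I will use the invariance formula \eqref{P} together with Proposition \ref{m invariant}. Fix $\zeta$ and set $F(\mathbf{q}):=P(\mathbf{q},\zeta)$. By \eqref{P}, $F\circ\varphi_{\mathbf{a}}(\mathbf{q})=P(\mathbf{q},\varphi_{\mathbf{a}}(\zeta))P(\mathbf{a},\zeta)$. Therefore
\[
\triangle F(\mathbf{a})=\triangle_0(F\circ\varphi_{\mathbf{a}})(\mathbf{0})=P(\mathbf{a},\zeta)\,\triangle_0 P(\cdot,\eta)(\mathbf{0}),\qquad \eta=\varphi_{\mathbf{a}}(\zeta)\in S^{4n-1}.
\]
So it suffices to show $\triangle_0 P(\cdot,\eta)(\mathbf{0})=0$ for every $\eta\in S^{4n-1}$. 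By $\mathrm{Sp}(n)\mathrm{Sp}(1)$-invariance we may take $\eta=(1,0,\dots,0)$. Then $P(\mathbf{q},\eta)=(1-|\mathbf{q}|^2)^{2n+1}|1-\bar q_1|^{-4n-2}$, and a second-order Taylor expansion at $\mathbf{0}$ finishes the computation. Write $q_1=x_0+\dots$ and $s:=|\mathbf{q}|^2$. Then $|1-\bar q_1|^{-2}=(1-2x_0+|q_1|^2)^{-1}$, so
\[
P=\bigl(1-(2n+1)s\bigr)\bigl(1+(2n+1)(2x_0-|q_1|^2)+\tbinom{2n+2}{2}4x_0^2\bigr)+O(|\mathbf{q}|^3).
\]
Applying $\triangle_0$ at $\mathbf{0}$ gives $-(2n+1)\cdot 8n-(2n+1)\cdot 8+(2n+1)(2n+2)\cdot 4$, which equals $0$. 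This pointwise Laplacian computation at the origin is the only genuinely computational step, and it is where I would be most careful with the constants. As a cross-check, it is consistent with the exponent $2n+1$ that appeared as a root in the radial ODE \eqref{g eq}.

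For (2), I will check directly that $|1-r\eta^*\zeta|=|1-r\zeta^*\eta|$. This holds because $\zeta^*\eta=\overline{\eta^*\zeta}$ as quaternions, and conjugation preserves the modulus. Since $|r\eta|=|r\zeta|=r$, the two kernels agree.

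For (3), I will apply the Dirichlet problem solution of Theorem \ref{posson}, or more directly the representation formula \eqref{poisson ua}, to the constant function $u\equiv1$. This $u$ is trivially $\mathcal{QM}$-harmonic and continuous up to the boundary, so $1=\int P(\mathbf{a},\zeta)\,d\sigma(\zeta)$. An alternative route avoids the Green formula: by (1) and (2), $v(\mathbf{a})=\int P(\mathbf{a},\zeta)\,d\sigma$ is $\mathcal{QM}$-harmonic and radial. The radial analysis in the proof of \eqref{m harmonic mv} then forces $v$ to be constant, and $v(\mathbf{0})=1$.

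For (4), the estimate is elementary. For $|\zeta-\eta|>\delta$ and $|\mathbf{q}-\eta|<\delta/2$, we have $|1-\mathbf{q}^*\zeta|=|\zeta^*\zeta-\mathbf{q}^*\zeta|\ge\bigl||\zeta|^2-\mathrm{Re}\,\mathbf{q}^*\zeta\bigr|$. More simply, $|1-\mathbf{q}^*\zeta|\ge \tfrac12|\zeta-\mathbf{q}|^2$-type bounds hold, or directly $|1-\mathbf{q}^*\zeta|\ge 1-\mathrm{Re}(\mathbf{q}^*\zeta)\ge\tfrac12|\zeta-\mathbf{q}|^2\ge\delta^2/8$. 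Hence $P(\mathbf{q},\zeta)\le C_\delta(1-|\mathbf{q}|^2)^{2n+1}\to0$ uniformly on that region as $\mathbf{q}\to\eta$. Integrating over a set of measure at most $1$ gives the limit.
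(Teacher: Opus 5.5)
Your proposal is correct. Parts (1) and (2) follow the paper's argument. You reduce via \eqref{P} to $\triangle_0 P(\cdot,\eta)(\mathbf{0})$, rotate $\eta$ to $(1,0,\dots,0)$ and Taylor expand; your constant $(2n+1)(-8n-8+8n+8)=0$ is the bracket the paper obtains. The differences are in (3) and (4).

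For (3), the paper uses essentially your \emph{alternative} route. By (2), $\int P(r\eta,\zeta)\,d\sigma(\zeta)$ is the spherical mean over $|\mathbf{q}|=r$ of the $\mathcal{QM}$-harmonic function $P(\cdot,\eta)$. The mean value property \eqref{m harmonic mv}, which is proved from the radial ODE, then gives $P(\mathbf{0},\eta)=1$. In your version, radiality of $v$ comes from $\mathrm{Sp}(n)$-invariance of $P$ and $\sigma$, not from (2).

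Your primary route instead feeds $u\equiv1$ into the representation formula \eqref{poisson ua}. This is legitimate: for constant $u$ every term containing derivatives of $u$ vanishes, and what remains is a flux identity for the Green function $G_{\mathbf{a}}$. The cost is that it relies on Proposition \ref{Green} and the singular Green function, which the paper's route avoids. Cite only the representation half, though. Quoting Theorem \ref{posson} as a black box would be circular, because the paper establishes that $P[\varphi]$ actually solves the Dirichlet problem only through the present proposition and Proposition \ref{lim pf}.

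For (4), your bound $|1-\mathbf{q}^*\zeta|\ge 1-\mathrm{Re}(\mathbf{q}^*\zeta)\ge\frac12|\zeta-\mathbf{q}|^2$ is better than what the paper writes. The paper replaces $|1-\mathbf{q}^*\zeta|$ by $|\zeta-\mathbf{q}|$, which is an identity only for $n=1$. In general $|1-\mathbf{q}^*\zeta|=|(\zeta-\mathbf{q})^*\zeta|\le|\zeta-\mathbf{q}|$, with strict inequality for example at $\zeta=(1,0,\dots,0)$, $\mathbf{q}=(0,t,0,\dots,0)$. That is the wrong direction for an upper bound on $P$.

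Taking $|\mathbf{q}-\eta|<\delta/2$, your version gives $P(\mathbf{q},\zeta)\le(8/\delta^2)^{4n+2}(1-|\mathbf{q}|^2)^{2n+1}$ on $|\zeta-\eta|>\delta$. This repairs that step, at the harmless cost of a worse power of $\delta$. In the write-up, drop the hedged intermediate phrasing and keep only this final chain.
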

\begin{proof}
(1)\;By \eqref{P}, we get
\begin{equation*}\begin{aligned}
\triangle P(\cdot,\zeta)|_{\mathbf{a}}=\left.\triangle_0 \left(P\left(\varphi_{\mathbf{a}}(\mathbf{q}),\zeta\right)\right)\right|_{\mathbf{q}=0}=\triangle_0 (P(\mathbf{q},\varphi_{\mathbf{a}}(\zeta))|_{\mathbf{q}=0}\cdot P(\mathbf{a},\zeta).
\end{aligned}\end{equation*}
Note that there exists $U\in{\rm Sp}(n)$ such that $U^*\varphi_{\mathbf{a}}(\zeta)=\eta_0=(1,0,\dots,0)$. Then,
\begin{equation*}\begin{aligned}
\left.\triangle_0 (P(\mathbf{q},\varphi_{\mathbf{a}}(\zeta)))\right|_{\mathbf{q}=0}=&\left.\triangle_0 \left(\frac{1-|\mathbf{q}|^2}{|1-\mathbf{q}^*\varphi_{\mathbf{a}}(\zeta)|^2}\right)^{2n+1}\right|_{\mathbf{q}=0}\\
=&\left.\triangle_0 \left(\frac{1-|U\mathbf{q}|^2}{|1-(U\mathbf{q})^*\varphi_{\mathbf{a}}(\zeta)|^2}\right)^{2n+1}\right|_{\mathbf{q}=0}=\left.\triangle_0 \left(\frac{1-|\mathbf{q}|^2}{|1-\mathbf{q}^*\eta_0|^2}\right)^{2n+1}\right|_{\mathbf{q}=0}\\
=&\triangle_0 \bigg(\left(1-(2n+1)|\mathbf{q}|^2+O(|\mathbf{q}|^3)\right)(1+(4n+2){\rm Re}\;q_1-(2n+1)|q_1|^2\bigg.\\
&\left.\left.\qquad+2(2n+1)(2n+2)({\rm Re}\;q_1)^2+O(|\mathbf{q}|^3)\right)\right|_{\mathbf{q}=0}\\
=&\left.(2n+1)\triangle_0\bigg(-|\mathbf{q}|^2-|q_1|^2+(4n+4)({\rm Re}\;q_1)^2\bigg)\right|_{\mathbf{q}=0}=0,
\end{aligned}\end{equation*}
by invariance of $\triangle_0$ under $U\in{\rm Sp}(n)$. Thus $\triangle P(\cdot,\zeta)=0$.\\
(2)
It follows from definition \eqref{def poisson k}.\\
(3) It follows from (2) and $\mathcal{QM}$-harmonic mean value formula \eqref{m harmonic mv} that
\begin{equation*}\begin{aligned}
\int_{S^{4n-1}}P(r\eta,\zeta)d\sigma(\zeta)&=\int_{S^{4n-1}}P(r\zeta,\eta)d\sigma(\zeta)=P(0,\eta)\equiv1,
\end{aligned}\end{equation*}
(4) This is because
\begin{equation*}\begin{aligned}
0\leq\int_{|\zeta-\eta|>\delta}P(\mathbf{q},\zeta)d\sigma(\zeta)&=\int_{|\zeta-\eta|>\delta}\bigg(\frac{1-|\mathbf{q}|^2}{|\zeta^*-\mathbf{q}^*|^2}\bigg)^{2n+1}d\sigma(\zeta)\\
&\leq\int_{|\zeta-\eta|>\delta}\bigg(\frac{1-|\mathbf{q}|^2}{\delta^2}\bigg)^{2n+1}d\sigma(\zeta)\rightarrow0,\\
\end{aligned}\end{equation*}
as $\mathbf{q}\rightarrow\eta$. The proposition is proved.
\end{proof}

\begin{prop}\label{lim pf}
Let f be a bounded measurable function on $S^{4n-1}$. Then
\begin{equation*}\begin{aligned}
\lim_{B^{4n}\ni\mathbf{q}\rightarrow\zeta}P[f](\mathbf{q})=f(\zeta)
\end{aligned}\end{equation*}
at each $\zeta\in S^{4n-1}$ where f is continuous.

\end{prop}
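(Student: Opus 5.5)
The plan is the standard approximate-identity argument, built on the four properties of the $\mathcal{QM}$-Poisson kernel in Proposition \ref{lem pf}. Fix $\zeta\in S^{4n-1}$ at which $f$ is continuous, and let $M:=\sup_{S^{4n-1}}|f|<\infty$. By Proposition \ref{lem pf} (3), $\int_{S^{4n-1}}P(\mathbf{q},\eta)d\sigma(\eta)=1$ for every $\mathbf{q}\in B^{4n}$. Subtracting this identity times $f(\zeta)$ gives
\begin{equation*}
P[f](\mathbf{q})-f(\zeta)=\int_{S^{4n-1}}P(\mathbf{q},\eta)\bigl(f(\eta)-f(\zeta)\bigr)d\sigma(\eta).
\end{equation*}
The kernel $P(\mathbf{q},\eta)$ is nonnegative, since by \eqref{def poisson k} it is a positive power of a positive quotient. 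This lets us bound the integrand in absolute value.

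Given $\varepsilon>0$, continuity of $f$ at $\zeta$ yields a $\delta>0$ such that $|f(\eta)-f(\zeta)|<\varepsilon$ whenever $|\eta-\zeta|\le\delta$. We split the integral into the regions $|\eta-\zeta|\le\delta$ and $|\eta-\zeta|>\delta$. On the near region, positivity and (3) bound the contribution by $\varepsilon\int P(\mathbf{q},\eta)d\sigma(\eta)=\varepsilon$. On the far region, the contribution is at most $2M\int_{|\eta-\zeta|>\delta}P(\mathbf{q},\eta)d\sigma(\eta)$, and by Proposition \ref{lem pf} (4), applied with $\zeta$ as the fixed boundary point, this tends to $0$ as $\mathbf{q}\to\zeta$.

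It follows that $\limsup_{\mathbf{q}\to\zeta}|P[f](\mathbf{q})-f(\zeta)|\le\varepsilon$, and letting $\varepsilon\to0$ completes the proof. The only point needing care is to confirm that (4) is stated for unrestricted approach $\mathbf{q}\to\eta$ within $B^{4n}$. It is, since the estimate in its proof uses only $|\zeta^*-\mathbf{q}^*|\ge\delta/2$ eventually, together with $1-|\mathbf{q}|^2\to0$.

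There is one small correction to make there: the bound in the proof of (4) uses $\delta^2$ where it should use $(\delta/2)^2$ for $\mathbf{q}$ close to $\eta$. This does not affect the conclusion. No real obstacle is expected; measurability and boundedness of $f$ ensure that all integrals are finite.
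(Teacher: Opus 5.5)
Your argument is correct and is essentially the paper's own proof: subtract $f(\zeta)$ using (3), split the sphere at $|\eta-\zeta|=\delta$, and use positivity of $P$ with (4) on the far region. One caveat concerns your aside on the proof of (4), where you inherit a slip from the paper: the denominator of $P(\mathbf{q},\zeta)$ is $|1-\mathbf{q}^*\zeta|^2$, not $|\zeta-\mathbf{q}|^2$, and for $n\ge 2$ one only has $|1-\mathbf{q}^*\zeta|=|(\zeta-\mathbf{q})^*\zeta|\le|\zeta-\mathbf{q}|$, which is the wrong direction, so the correct estimate is $|1-\mathbf{q}^*\zeta|\ge {\rm Re}(1-\eta^*\zeta)-|\mathbf{q}-\eta|=\tfrac12|\zeta-\eta|^2-|\mathbf{q}-\eta|>\tfrac14\delta^2$ once $|\mathbf{q}-\eta|<\tfrac14\delta^2$, which still gives (4) and hence your conclusion.
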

\begin{proof}
Suppose $|f|\leq M$ on $S^{4n-1}$. By the continuity of $f$, given $\varepsilon>0$, there exists $\delta>0$ such that $|f(\eta)-f(\zeta)|<\varepsilon$ if $|\eta-\zeta|<\delta$.
Thus, for $\mathbf{q}$ close to $\eta$,
\begin{equation*}\begin{aligned}
|P[f](\mathbf{q})-f(\zeta)|&\leq\int_{S^{4n-1}}P(\mathbf{q},\eta)|f(\eta)-f(\zeta)|d\sigma(\eta)\\
&\leq\varepsilon+2M\int_{|\eta-\zeta|\geq\delta}P(\mathbf{q},\eta)d\sigma(\eta)\leq 2\varepsilon,
\end{aligned}\end{equation*}
by Proposition \ref{lem pf} (3)-(4).
\end{proof}

\begin{prop}
For $f\in L^1(S^{4n-1})$, we have $P[f\circ \varphi_{\mathbf{a}}]=P[f]\circ \varphi_{\mathbf{a}}$ for any $\mathbf{a}\in B^{4n}$.
\end{prop}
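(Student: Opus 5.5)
The plan is a direct change of variables on the sphere. It combines the kernel identity \eqref{P} with the transformation rule for the surface measure $d\sigma$ under $\varphi_{\mathbf{a}}$.

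Fix $\mathbf{a}\in B^{4n}$ and $\mathbf{q}\in B^{4n}$. By definition,
\begin{equation*}
P[f](\varphi_{\mathbf{a}}(\mathbf{q}))=\int_{S^{4n-1}}P(\varphi_{\mathbf{a}}(\mathbf{q}),\zeta)f(\zeta)\,d\sigma(\zeta)=\int_{S^{4n-1}}P(\mathbf{q},\varphi_{\mathbf{a}}(\zeta))P(\mathbf{a},\zeta)f(\zeta)\,d\sigma(\zeta),
\end{equation*}
where the second equality uses \eqref{P}. Next substitute $\zeta=\varphi_{\mathbf{a}}(\eta)$, which is legitimate because $\varphi_{\mathbf{a}}$ is an involutive diffeomorphism of $S^{4n-1}$. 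The result will follow once we know the boundary Jacobian identity
\begin{equation}\label{sigma jac}
d\sigma(\varphi_{\mathbf{a}}(\eta))=P(\mathbf{a},\eta)\,d\sigma(\eta),\qquad\text{equivalently}\qquad P(\mathbf{a},\varphi_{\mathbf{a}}(\eta))\,d\sigma(\varphi_{\mathbf{a}}(\eta))=d\sigma(\eta).
\end{equation}
The two forms are equivalent because $P(\mathbf{a},\varphi_{\mathbf{a}}(\eta))P(\mathbf{a},\eta)=1$. To see this, put $\mathbf{q}=\mathbf{a}$ in \eqref{P}: $P(\mathbf{0},\zeta)=1=P(\mathbf{a},\varphi_{\mathbf{a}}(\zeta))P(\mathbf{a},\zeta)$. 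Granting \eqref{sigma jac}, the integral becomes
\begin{equation*}
\int_{S^{4n-1}}P(\mathbf{q},\eta)f(\varphi_{\mathbf{a}}(\eta))\,d\sigma(\eta)=P[f\circ\varphi_{\mathbf{a}}](\mathbf{q}),
\end{equation*}
which is the claim.

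The main obstacle is \eqref{sigma jac}. My first approach avoids computing a boundary Jacobian directly and works by duality. For continuous $h$ on $S^{4n-1}$, consider the linear functional $\Lambda(h):=\int h(\varphi_{\mathbf{a}}(\zeta))P(\mathbf{a},\zeta)\,d\sigma(\zeta)=P[h\circ\varphi_{\mathbf{a}}](\mathbf{a})$. The aim is to show $\Lambda(h)=\int h\,d\sigma$.

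First let $u$ be the solution of the Dirichlet problem with boundary data $h$, given by Theorem \ref{posson}, so $u=P[h]$. By Proposition \ref{m invariant}, $u\circ\varphi_{\mathbf{a}}$ is $\mathcal{QM}$-harmonic. It is continuous on $\overline{B^{4n}}$ with boundary values $h\circ\varphi_{\mathbf{a}}$, since $\varphi_{\mathbf{a}}$ is a diffeomorphism of $\overline{B^{4n}}$. By uniqueness in Theorem \ref{posson}, $u\circ\varphi_{\mathbf{a}}=P[h\circ\varphi_{\mathbf{a}}]$. Evaluating at $\mathbf{a}$ gives
\begin{equation*}
\Lambda(h)=u(\varphi_{\mathbf{a}}(\mathbf{a}))=u(\mathbf{0})=P[h](\mathbf{0})=\int h\,d\sigma,
\end{equation*}
because $P(\mathbf{0},\cdot)\equiv1$. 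Thus the pushforward of $P(\mathbf{a},\cdot)\,d\sigma$ under $\varphi_{\mathbf{a}}$ is $\sigma$, which is exactly \eqref{sigma jac}.

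Notice that this argument already yields the proposition for continuous $f$: apply it at a general point $\mathbf{q}$ instead of $\mathbf{a}$. To pass to $f\in L^1$, extend the measure identity from continuous functions to all integrable ones by the Riesz representation and density. Then run the change-of-variables computation above for general $f$. Its absolute convergence is ensured because $P(\mathbf{q},\cdot)$ is bounded for fixed $\mathbf{q}$, and because $f\circ\varphi_{\mathbf{a}}\in L^1$, which follows from \eqref{sigma jac} together with the boundedness of $P(\mathbf{a},\cdot)$ above and below.
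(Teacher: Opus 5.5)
Your proof is correct. It runs on the same engine as the paper (uniqueness for the Dirichlet problem plus the $\mathcal{QM}$-invariance of $\triangle$), but you organize it differently.

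\textbf{The paper's route.} For continuous $f$, both $P[f\circ\varphi_{\mathbf a}]$ and $P[f]\circ\varphi_{\mathbf a}$ are $\mathcal{QM}$-harmonic with the same boundary values (Proposition \ref{lim pf}), so they coincide by uniqueness in Theorem \ref{posson}. The paper then passes to $L^1$ with the single remark that $C(S^{4n-1})$ is dense in $L^1(S^{4n-1})$.

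\textbf{Your route.} You apply the uniqueness argument only at the point $\mathbf a$. From it you extract the boundary change-of-variables formula $\int_{S^{4n-1}}h\,d\sigma=\int_{S^{4n-1}}h(\varphi_{\mathbf a}(\eta))P(\mathbf a,\eta)\,d\sigma(\eta)$, i.e.\ the Jacobian of $\varphi_{\mathbf a}|_{S^{4n-1}}$ is $P(\mathbf a,\cdot)$. You combine this with the kernel identity \eqref{P} and with $P(\mathbf a,\varphi_{\mathbf a}(\eta))P(\mathbf a,\eta)=1$. The proposition then follows for every $f\in L^1$ by a single substitution, pointwise in $\mathbf q$, with no limiting argument.

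\textbf{What your route buys.} It makes the $L^1$ step rigorous. The paper's density argument tacitly needs $f\mapsto f\circ\varphi_{\mathbf a}$ to be well defined and bounded on $L^1(\sigma)$, so that $P[f_k\circ\varphi_{\mathbf a}]\to P[f\circ\varphi_{\mathbf a}]$ when $f_k\to f$; this is never stated. Your Jacobian formula supplies it explicitly: it shows that $\varphi_{\mathbf a}$ preserves $\sigma$-null sets, and it gives $\|f\circ\varphi_{\mathbf a}\|_{L^1}\le\sup_{\zeta}P(\mathbf a,\zeta)\,\|f\|_{L^1}$, with the supremum at most $\bigl((1+|\mathbf a|)/(1-|\mathbf a|)\bigr)^{2n+1}$.

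\textbf{What the paper's route buys.} It is shorter and needs no measure-theoretic bookkeeping, such as uniqueness in the Riesz representation theorem or null sets, when $f$ is continuous. As you yourself note, running your argument at a general point $\mathbf q$ already reproduces the paper's continuous case, so your proof contains the paper's.
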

\begin{proof}
First we assume $f$ is continuous on $S^{4n-1}$. Then, $P[f]$ and $P[f\circ \varphi_{\mathbf{a}}]$ are $\mathcal{QM}$-harmonic by Proposition \ref{lem pf} (1), and $P[f]\circ \varphi_{\mathbf{a}}$ is also $\mathcal{QM}$-harmonic by the invariance of $\triangle$ under $\varphi_{\mathbf{a}}$ in Proposition \ref{m invariant}. Also
\begin{equation}\begin{aligned}
\lim_{\mathbf{q}\rightarrow\zeta}(P[f]\circ \varphi_{\mathbf{a}})(\mathbf{q})=f(\varphi_{\mathbf{a}}(\zeta))=\lim_{\mathbf{q}\rightarrow\zeta}P[f\circ \varphi_{\mathbf{a}}](\mathbf{q}),
\end{aligned}\end{equation}
by Proposition \ref{lim pf}. By the uniqueness of the solution to the Dirichlet problem for $\mathcal{QM}$-Laplace equation in Theorem \ref{posson}, we get $P[f\circ \varphi_{\mathbf{a}}]=P[f]\circ \varphi_{\mathbf{a}}$.
Since $C(S^{4n-1})$ is dense in $L^1(S^{4n-1})$, the identity holds for any $f\in L^1(S^{4n-1})$.
\end{proof}
\subsection{The size estimate for non-isotropic balls}
Similar to the complex case in \cite[Lemma 8.1.11]{Stoll}, we have the following distantce on $B^{4n}:d(\mathbf{a},\mathbf{b}):=|1-\mathbf{a}^*\mathbf{b}|^{\frac12}$ for $\mathbf{a},\mathbf{b}\in \overline{B}^{4n}$. In particular, it is a nonisotropic distance on the sphere $S^{4n-1}$.
\begin{lem}\label{distance}
$d$ is a {\rm Sp}$(n)${\rm Sp}$(1)$-invariant distance on $\overline{B}^{4n}$.
\end{lem}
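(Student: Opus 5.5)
The plan is to prove the three metric properties of $d(\mathbf{a},\mathbf{b})=|1-\mathbf{a}^*\mathbf{b}|^{\frac12}$ and its ${\rm Sp}(n){\rm Sp}(1)$-invariance directly in quaternionic notation. Since $\mathbf{a}^*\mathbf{b}$ is a single quaternion, the only care needed for noncommutativity is to keep the order of factors when expanding. The key tool is a real inequality relating $|\mathbf{a}-\mathbf{b}|^2$, $1-|\mathbf{b}|^2$ and $|1-\mathbf{a}^*\mathbf{b}|$, as in the complex case.

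\emph{Invariance and symmetry.} For $(\mathbf{U},q_0)\in{\rm Sp}(n){\rm Sp}(1)$ we have $(\mathbf{U}\mathbf{a}q_0^*)^*(\mathbf{U}\mathbf{b}q_0^*)=q_0\mathbf{a}^*\mathbf{U}^*\mathbf{U}\mathbf{b}q_0^*=q_0\mathbf{a}^*\mathbf{b}q_0^*$. Hence
\[
1-(\mathbf{U}\mathbf{a}q_0^*)^*(\mathbf{U}\mathbf{b}q_0^*)=q_0(1-\mathbf{a}^*\mathbf{b})q_0^*,
\]
and this has the same modulus as $1-\mathbf{a}^*\mathbf{b}$ because $|q_0|=1$ and the quaternionic norm is multiplicative. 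Symmetry follows because $1-\mathbf{b}^*\mathbf{a}=\overline{1-\mathbf{a}^*\mathbf{b}}$.

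\emph{Nondegeneracy.} By the Cauchy--Schwarz inequality, $|\mathbf{a}^*\mathbf{b}|\le|\mathbf{a}||\mathbf{b}|\le1$. So $d(\mathbf{a},\mathbf{b})=0$ forces $\mathbf{a}^*\mathbf{b}=1$, which requires $|\mathbf{a}|=|\mathbf{b}|=1$ and equality in Cauchy--Schwarz, that is $\mathbf{a}=\mathbf{b}\in S^{4n-1}$. Since $d(\mathbf{a},\mathbf{a})=(1-|\mathbf{a}|^2)^{\frac12}$, the function $d$ separates points only on the sphere. I will therefore say explicitly that "distance" means a genuine metric on $S^{4n-1}$, together with the symmetry and triangle inequality on all of $\overline{B}^{4n}$, as in the complex case.

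\emph{Triangle inequality (main step).} I will use the exact identity, checked by expanding while keeping factor order,
\[
1-\mathbf{a}^*\mathbf{c}=(1-\mathbf{a}^*\mathbf{b})+(1-\mathbf{b}^*\mathbf{c})-(1-|\mathbf{b}|^2)+(\mathbf{a}-\mathbf{b})^*(\mathbf{b}-\mathbf{c}).
\]
It gives
\[
|1-\mathbf{a}^*\mathbf{c}|\le|1-\mathbf{a}^*\mathbf{b}|+|1-\mathbf{b}^*\mathbf{c}|+(1-|\mathbf{b}|^2)+|\mathbf{a}-\mathbf{b}||\mathbf{b}-\mathbf{c}|.
\]
Next, $|\mathbf{a}-\mathbf{b}|^2=|\mathbf{a}|^2+|\mathbf{b}|^2-2{\rm Re}(\mathbf{a}^*\mathbf{b})$, so
\[
|\mathbf{a}-\mathbf{b}|^2+(1-|\mathbf{a}|^2)+(1-|\mathbf{b}|^2)=2{\rm Re}(1-\mathbf{a}^*\mathbf{b})\le2|1-\mathbf{a}^*\mathbf{b}|.
\]
In particular $|\mathbf{a}-\mathbf{b}|^2+(1-|\mathbf{b}|^2)\le2|1-\mathbf{a}^*\mathbf{b}|$, and similarly $|\mathbf{b}-\mathbf{c}|^2+(1-|\mathbf{b}|^2)\le2|1-\mathbf{b}^*\mathbf{c}|$. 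Applying the Cauchy--Schwarz inequality in $\mathbb{R}^2$ to the vectors $\bigl(|\mathbf{a}-\mathbf{b}|,(1-|\mathbf{b}|^2)^{\frac12}\bigr)$ and $\bigl(|\mathbf{b}-\mathbf{c}|,(1-|\mathbf{b}|^2)^{\frac12}\bigr)$ gives
\[
(1-|\mathbf{b}|^2)+|\mathbf{a}-\mathbf{b}||\mathbf{b}-\mathbf{c}|\le2|1-\mathbf{a}^*\mathbf{b}|^{\frac12}|1-\mathbf{b}^*\mathbf{c}|^{\frac12}.
\]
Substituting into the previous bound yields $d(\mathbf{a},\mathbf{c})^2\le\bigl(d(\mathbf{a},\mathbf{b})+d(\mathbf{b},\mathbf{c})\bigr)^2$, which is the triangle inequality.

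The only delicate point is finding the four-term identity for $1-\mathbf{a}^*\mathbf{c}$ with the quaternionic factors in the correct order. Once it is in hand, everything reduces to real inequalities.
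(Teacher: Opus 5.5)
Your proof is correct, but it takes a different route from the paper's.

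\textbf{The paper's route.} The paper first uses ${\rm Sp}(n)$-transitivity to move the middle point to $(r,0,\dots,0)^t$. It then splits $\mathbf{a}^*\mathbf{c}=\overline{a}_1c_1+\widetilde{\mathbf{a}}^*\widetilde{\mathbf{c}}$ and estimates the two pieces separately. For the first it uses $|1-\overline{a}_1c_1|\le|1-r\overline{a}_1|+|1-rc_1|$, which rests on $|r-c_1|\le|1-rc_1|$. For the second it uses $|\widetilde{\mathbf{a}}|^2\le 2|1-ra_1|$ and $|\widetilde{\mathbf{c}}|^2\le 2|1-rc_1|$. This is essentially Rudin's complex argument, carried out in one quaternionic coordinate.

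\textbf{Your route.} You work coordinate-free. The four-term identity for $1-\mathbf{a}^*\mathbf{c}$, the bound $|\mathbf{a}-\mathbf{b}|^2+(1-|\mathbf{b}|^2)\le 2|1-\mathbf{a}^*\mathbf{b}|$, and Cauchy--Schwarz in $\mathbb{R}^2$ give the triangle inequality directly.

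\textbf{What each buys.} Your argument needs no normalization and makes no use of the group invariance. It also works verbatim over $\mathbb{R}$, $\mathbb{C}$ or $\mathbb{H}$, since noncommutativity only enters through keeping factor order in one identity. The paper's route reduces the problem to a concrete one-variable computation, but it needs the ${\rm Sp}(n)$-action and invariance first.

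\textbf{Extra care in your version.} You are also more careful on two points the paper leaves implicit. First, you check symmetry. Second, you note that $d(\mathbf{a},\mathbf{a})=(1-|\mathbf{a}|^2)^{1/2}>0$ for interior points. So $d$ is a genuine metric only on $S^{4n-1}$, which is all that is used later for the non-isotropic balls $Q(\zeta,\delta)$. On $\overline{B}^{4n}$ it is only symmetric and satisfies the triangle inequality, whereas the paper loosely calls it a distance on $\overline{B}^{4n}$.
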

\begin{proof}
 $d$ is Sp$(n)$Sp$(1)$-invariant, since for $A\in{\rm Sp}(n)$, $p_0\in{\rm Sp}(1)$,
\begin{equation}\begin{aligned}
d(A\mathbf{a}p_0,A\mathbf{b}p_0)=|1-(A\mathbf{a}p_0)^*A\mathbf{b}p_0|^{\frac12}=|1-\mathbf{a}^*\mathbf{b}|^{\frac12}=d(\mathbf{a},\mathbf{b}).
\end{aligned}\end{equation}

For fixed $\mathbf{b}'\in \overline{B}^{4n}$, there exists $U\in{\rm Sp}\;(n)$, such that $U\mathbf{b}'=(r,0,\dots,0)^t$ for some $0\leq r<1$. So it is sufficient to prove that for $\mathbf{a}=(a_1,\dots,a_n),\mathbf{c}=(c_1,\dots,c_n)\in B^{4n}$,
\begin{equation}\begin{aligned}\label{sim distance}
d(\mathbf{a},\mathbf{c})\leq  d(\mathbf{a},(r,0,\dots,0)^t)+d((r,0,\dots,0)^t,\mathbf{c}).
\end{aligned}\end{equation}

  Denote $\widetilde{\mathbf{a}}:=(0,a_2,\dots,a_n)^t$, $\widetilde{\mathbf{c}}:=(0,c_2,\dots,c_n)^t$, then $\mathbf{a}^*\mathbf{c}=\widetilde{\mathbf{a}}^*\widetilde{\mathbf{c}}+\overline{a}_1c_1$, and so
\begin{equation}\begin{aligned}\label{6.21}
d(\mathbf{a},\mathbf{c})^2=|1-\widetilde{\mathbf{a}}^*\widetilde{\mathbf{c}}-\overline{a}_1c_1|\leq|1-\overline{a}_1c_1|+|\widetilde{\mathbf{a}}||\widetilde{\mathbf{c}}|.
\end{aligned}\end{equation}
But
\begin{equation}\begin{aligned}\label{6.22}
|1-\overline{a}_1c_1|&=|1-r\overline{a}_1+\overline{a}_1(r-c_1)|\leq|1-r\overline{a}_1|+|1-rc_1|,\\
|\widetilde{\mathbf{a}}|^2&\leq 1-|a_1|^2\leq 1-|ra_1|^2\leq 2|1-ra_1|,\\
|\mathbf{c}|^2&\leq 2|1-rc_1|.
\end{aligned}\end{equation}
The first inequality holds because
\begin{equation*}\begin{aligned}
|r-c_1|^2-|1-rc_1|^2=-(1-r^2)(1-|c_1|^2)<0.
\end{aligned}\end{equation*}
Substituting \eqref{6.22} into \eqref{6.21}, we get
\begin{equation}\begin{aligned}
d(\mathbf{a},\mathbf{c})^2\leq\left(|1-ra_1|^{\frac12}+|1-rc_1|^{\frac12}\right)^{2},
\end{aligned}\end{equation}
i.e. \eqref{sim distance} is proved.
\end{proof}
The {\it non-isotropic ball} under the distance $d$ on the sphere is
\begin{equation}\begin{aligned}
Q(\zeta,\delta):=\{\eta\in S^{4n-1};d(\zeta,\eta)<\delta\},
\end{aligned}\end{equation}
for some $\zeta\in S^{4n-1}$ and $0<\delta<1$.
 Its volume has the following estimate.
\begin{prop}\label{delta 2n+1}
For $0<\delta<1$, $\sigma(Q(\zeta,\delta))\approx\delta^{4n+2}$.
\end{prop}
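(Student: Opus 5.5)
By the ${\rm Sp}(n){\rm Sp}(1)$-invariance of $d$ (Lemma \ref{distance}) and of $\sigma$, the measure $\sigma(Q(\zeta,\delta))$ does not depend on $\zeta$. We may therefore take $\zeta=\eta_0=(1,0,\dots,0)^t$. For $\eta=(\eta_1,\eta')\in S^{4n-1}$ with $\eta_1\in\mathbb{H}$ and $\eta'\in\mathbb{H}^{n-1}$ we have $d(\eta_0,\eta)^2=|1-\eta_1|$. Hence
\[
Q(\eta_0,\delta)=\{\eta\in S^{4n-1};\ |1-\eta_1|<\delta^2\},
\]
and the whole problem reduces to computing the distribution of the first quaternionic coordinate $\eta_1$ under $\sigma$.

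The key step is the push-forward of $\sigma$ under the projection $\eta\mapsto\eta_1\in B^4\subset\mathbb{R}^4$. For $n\geq 2$ this is the standard projection formula: for $g$ integrable on the closed unit ball $\overline{B^4}$,
\[
\int_{S^{4n-1}}g(\eta_1)\,d\sigma(\eta)=c_n\int_{B^4}g(w)\,(1-|w|^2)^{2n-3}\,dV(w),
\]
with $c_n>0$. It can be derived by writing $\eta=(w,\sqrt{1-|w|^2}\,\xi)$ with $\xi\in S^{4n-5}$, or by comparing both sides on radial test functions via polar coordinates in $\mathbb{R}^{4n}=\mathbb{R}^4\times\mathbb{R}^{4n-4}$; this is the real analogue of \cite[Lemma 1.4.5 / A.5]{Rudin} with real dimension $4n$ and $k=4$. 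The case $n=1$ is treated separately: then $\sigma$ is surface measure on $S^3$ and $Q$ is a spherical cap, handled directly by the same computation below. With the formula in hand,
\[
\sigma(Q(\eta_0,\delta))=c_n\int_{\{w\in B^4;\ |1-w|<\delta^2\}}(1-|w|^2)^{2n-3}\,dV(w).
\]

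The main obstacle is estimating this integral up to constants. Write $w=1-x$ with $x\in\mathbb{R}^4$ and $|x|<\delta^2$, splitting $x=(x_0,x')$ into real part $x_0$ and imaginary part $x'\in\mathbb{R}^3$. Then
\[
1-|w|^2=2x_0-x_0^2-|x'|^2 ,
\]
so the region $|w|<1$ is $2x_0>x_0^2+|x'|^2$. With $t=\delta^2$ small this region inside $|x|<t$ is comparable to the paraboloid piece $\{|x'|^2\lesssim x_0,\ 0<x_0<t\}$, on which $1-|w|^2\approx x_0-|x'|^2/2$. Integrating first over $x'$ in the ball of radius $\sim\sqrt{x_0}$ (but also $\le t$) gives
\[
\int (x_0-|x'|^2)^{2n-3}\,dx'\approx x_0^{2n-3}\min(x_0,t^2)^{3/2}.
\]
For $x_0\le t^2$ the $x'$-ball has radius $\sqrt{x_0}$ and contributes $x_0^{2n-3/2}$. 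For $t^2<x_0<t$ the radius is capped at $t$ and contributes $x_0^{2n-3}t^3$. Integrating in $x_0$, the first range gives $\approx t^{4n-1}$, negligible, and the second gives $t^3\cdot t^{2n-2}=t^{2n+1}=\delta^{4n+2}$. Both upper and lower bounds follow from this two-sided comparison, for instance by restricting to $x_0\in(t/2,t)$ and $|x'|<t/2$ for the lower bound. This yields $\sigma(Q(\zeta,\delta))\approx\delta^{4n+2}$, uniformly for $0<\delta<1$; for $\delta$ near $1$ both sides are comparable to $1$. Geometrically, the ball has one real direction of size $\delta^2$ and three directions of size $\delta^2$, through the imaginary part of $\eta_1$, together with $4n-4$ directions of size $\delta$, giving $\delta^8\cdot\delta^{4n-4}$. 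The careful version is the $x_0$-integral above, and I expect the bookkeeping of the paraboloid constraint to be the delicate point.
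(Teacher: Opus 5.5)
Your argument is correct. It shares the paper's reduction: invariance to move to $\zeta=\mathbf{e}_1$, then the projection formula of Lemma \ref{lem proj} with weight $(1-|q_1|^2)^{2n-3}$. It differs in how the resulting four-dimensional integral over $E(\delta)=\{|q_1|\le 1,\ |1-q_1|<\delta^2\}$ is estimated.

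\textbf{The paper's route.} The paper uses the inversion $p_1=\delta^2(1-q_1)^{-1}$. Its Jacobian $\delta^8|p_1|^{-8}$, together with the identity $1-|q_1|^2=\delta^2(2\mathrm{Re}\,p_1-\delta^2)|p_1|^{-2}$, pulls out exactly $\delta^{4n+2}$. What remains is $c_n\int_{\widetilde{E}(\delta)}(2\mathrm{Re}\,p_1-\delta^2)^{2n-3}|p_1|^{-4n-2}\,dV$ over $\widetilde{E}(\delta)=\{2\mathrm{Re}\,p_1\ge\delta^2,\ |p_1|\ge 1\}$. This lies between absolute constants: above via $(2\mathrm{Re}\,p_1)^{2n-3}\le(2|p_1|)^{2n-3}$ and integrability of $|p_1|^{-2n-5}$ at infinity in $\mathbb{R}^4$, below via the fixed piece $\{\mathrm{Re}\,p_1\ge 1,\ |p_1|\le 2\}$. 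The bounds are uniform in $\delta\in(0,1)$ with no case split.

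\textbf{Your route.} Your Fubini slicing in $x=1-w$ is more hands-on. It shows where the mass sits: at $x_0\sim t$, $|x'|\lesssim t$, with the paraboloid region $x_0\lesssim t^2$ negligible. The price is that you treat small $t$ and $\delta$ near $1$ separately; your monotonicity remark covers the latter. You also rightly note that the projection formula needs $n\ge 2$ ($c_1=0$), which the paper leaves implicit. For $n=1$, though, what you need is the standard spherical-cap estimate $\sigma\approx(\delta^2)^3$, not ``the same computation''.

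\textbf{Two small slips.}
\begin{itemize}
\item Your lower-bound box $x_0\in(t/2,t)$, $|x'|<t/2$ is not contained in $\{|x|<t\}$; for example $x_0=0.9t$, $|x'|=0.45t$ gives $|x|^2>t^2$. Take $x_0\in(t/4,t/2)$ instead. There $|x|^2<t^2/2$ and $2x_0-x_0^2-|x'|^2\ge t/4-t^2/4\ge t/8$ for $t\le 1/2$, giving a lower bound $\gtrsim t^{2n-3}\cdot t^4=\delta^{4n+2}$.
\item Your heuristic count $\delta^8\cdot\delta^{4n-4}$ equals $\delta^{4n+4}$, not $\delta^{4n+2}$. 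The real part of $\eta_1$ is the normal direction to $S^{4n-1}$ at $\mathbf{e}_1$ and must not be counted. The correct count is $(\delta^2)^3\cdot\delta^{4n-4}$, which is exactly what your rigorous $x_0$-integral $t^3\cdot t^{2n-2}$ produces.
\end{itemize}
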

To prove this estimate, we need the following inequality:
\begin{equation}\begin{aligned}\label{1-p}
(1-|\mathbf{a}|^2)\frac{1-|\mathbf{q}|}{1+|\mathbf{q}|}\leq 1-|\mathbf{p}|^2\leq (1-|\mathbf{a}|^2)\frac{1+|\mathbf{q}|}{1-|\mathbf{q}|},
\end{aligned}\end{equation}
for $\mathbf{a},\mathbf{q}\in B^{4n}$ and $\mathbf{p}=\varphi_{\mathbf{a}}(\mathbf{q})$. It follows from applying $1-|\mathbf{q}|\leq|1-\mathbf{a}^*\mathbf{q}|\leq 1+|\mathbf{q}|$ to
\begin{equation*}\begin{aligned}
1-|\mathbf{p}|^2= (1-|\mathbf{a}|^2)\frac{1-|\mathbf{q}|^2}{|1-\mathbf{a}^*\mathbf{q}|^2},
\end{aligned}\end{equation*}
by \eqref{psi eq}. We also need the following lemma to prove the volume estimate.
\begin{lem}\label{lem proj}
Let $\mathcal{P}:\mathbf{q}=(q_1,\dots,q_n)\rightarrow q_1$ be the orthogonal projection of $\mathbb{H}^n$ onto $\mathbb{H}$. Then,
\begin{equation}\begin{aligned}\label{proj on 1}
\int_{S^{4n-1}}f\circ \mathcal{P}(\zeta)d\sigma(\zeta)=c_n\int_{B^{4}}(1-|p|^2)^{2n-3}f(p)dV_1(p),
\end{aligned}\end{equation}
for every $f\in L^1(B^4)$, where $dV_1$ is the Lebesgue measure on $B^4$, $c_n=\frac{(n-1)\omega_{4n-4}}{n\omega_{4n}}$.
\end{lem}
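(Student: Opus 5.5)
We reduce \eqref{proj on 1} to the classical formula for integrating a function of the first $k$ real coordinates over the unit sphere in $\mathbb{R}^N$. Identify $\mathbb{H}^n\cong\mathbb{R}^{4n}$ so that $q_1$ corresponds to the first four real coordinates. Then $f\circ\mathcal{P}$ depends only on these four coordinates, and $\sigma$, being ${\rm Sp}(n){\rm Sp}(1)$-invariant with total mass one, is the normalized rotation-invariant surface measure on $S^{4n-1}$ (both are the unique normalized measure invariant under the transitive group ${\rm Sp}(n)$). So it suffices to prove the real statement: for $N=4n$, $k=4$ and $g\in L^1(B^k)$,
\begin{equation*}
\int_{S^{N-1}}g(x_1,\dots,x_k)\,d\sigma(x)=\frac{\omega_{N-k}\,(N-k)}{N\,\omega_N}\int_{B^k}(1-|y|^2)^{\frac{N-k-2}{2}}g(y)\,dV(y).
\end{equation*}
With $N-k=4n-4$ the exponent is $2n-3$, and the constant becomes $\frac{(4n-4)\omega_{4n-4}}{4n\,\omega_{4n}}=\frac{(n-1)\omega_{4n-4}}{n\omega_{4n}}=c_n$.

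To prove the real formula, we integrate in polar coordinates on $\mathbb{R}^N$. For a test function $h$ on $\mathbb{R}^N$ we have $\int_{\mathbb{R}^N}h\,dV=N\omega_N\int_0^\infty\rho^{N-1}\int_{S^{N-1}}h(\rho\zeta)\,d\sigma(\zeta)\,d\rho$. Take $h(x)=g(x')\,\psi(|x|)$, where $x=(x',x'')\in\mathbb{R}^k\times\mathbb{R}^{N-k}$ and $\psi$ is a cutoff. We compute the left side using polar coordinates in $x''$ only: $dx''=(N-k)\omega_{N-k}s^{N-k-1}ds\,d\sigma''$, with $|x|^2=|x'|^2+s^2$. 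Letting $\psi$ concentrate near $\rho=1$, or equivalently differentiating $\int_{|x|<R}g(x')\,dV$ in $R$ at $R=1$, isolates the sphere. Concretely, set
\begin{equation*}
\int_{|x|<R}g(x')\,dV=(N-k)\omega_{N-k}\int_{|y|<R}g(y)\int_0^{\sqrt{R^2-|y|^2}}s^{N-k-1}\,ds\,dV(y).
\end{equation*}
Differentiating in $R$ at $R=1$ gives $\omega_{N-k}(N-k)\int_{B^k}g(y)(1-|y|^2)^{\frac{N-k-2}{2}}\,dV(y)$, since $\frac{d}{dR}\bigl[(R^2-|y|^2)^{\frac{N-k}{2}}/(N-k)\bigr]=R(R^2-|y|^2)^{\frac{N-k-2}{2}}$. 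On the other side the derivative is $N\omega_N\int_{S^{N-1}}g\,d\sigma$. Equating the two yields the formula.

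Finally we extend from continuous compactly supported $g$ to all $g\in L^1(B^4)$. Apply the identity to $|g|$, which makes both sides finite simultaneously, and then pass to the limit by monotone convergence or density. The main point requiring care is the differentiation step: we need $n\ge2$ so that $N-k\ge4$, the weight $(1-|y|^2)^{2n-3}$ is locally integrable, and the derivative can be taken under the integral. For $n=1$ the lemma degenerates ($c_1=0$), and we only need it for $n\ge2$. A cleaner alternative for the rigorous step is to verify the identity for $g$ continuous on $\overline{B^4}$ via the Lebesgue differentiation of $R\mapsto\int_{|x|<R}$, which is continuous in $R$.
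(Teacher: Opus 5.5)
Your proposal is correct and follows the paper's own proof: for $f\in C_c(B^4)$, both arguments differentiate $\int_{B^{4n}_r} f\circ\mathcal{P}\,dV$ at $r=1$. One computation uses polar coordinates and gives $4n\omega_{4n}\int_{S^{4n-1}} f\circ\mathcal{P}\,d\sigma$; the other uses Fubini with the transverse-ball volume $\omega_{4n-4}(r^2-|p|^2)^{2n-2}$ and gives $(4n-4)\omega_{4n-4}\int_{B^4}(1-|p|^2)^{2n-3}f\,dV_1$, after which both extend to $L^1$ by density. Your extra remarks are correct points the paper leaves implicit: you identify $\sigma$ with normalized surface measure via the transitivity of ${\rm Sp}(n)$, and you note that the formula degenerates for $n=1$ (where $c_1=0$), so $n\ge 2$ is needed.
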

\begin{proof}
Since $C_c(B^4)$ is dense in $L^1(B^4)$, we can assume that $f\in C_c(B^4_{r_0})$ for some $0<r_0<1$.
Let
\begin{equation*}\begin{aligned}
I(r):=&\int_{B^{4n}_r}f\circ \mathcal{P}(\mathbf{q})dV(\mathbf{q})=4n\omega_{4n}\int_{0}^r s^{4n-1}ds\int_{S^{4n-1}}f\circ\mathcal{P}(s\zeta)d\sigma(\zeta).
\end{aligned}\end{equation*}
Then
\begin{equation}\begin{aligned}\label{I'(1)}
I'(1)=4n\omega_{4n}\int_{S^{4n-1}}f\circ\mathcal{P}(\zeta)d\sigma(\zeta).
\end{aligned}\end{equation}
On the other hand, by Fubini's theorem, for $r>r_0$,
\begin{equation*}\begin{aligned}
I(r)=&\int_{|p|<r}f (p)\bigg(\int_{|\mathbf{v}|^2<r^2-|p|^2}dV_{n-1}(\mathbf{v})\bigg)dV_1(p)\\
=&\omega_{4n-4}\int_{B^{4}}f(p)(r^2-|p|^2)^{2n-2}dV_1(p),
\end{aligned}\end{equation*}
since $f$ vanishes outside $B^{4}_{r_0}$, where $\omega_{4n-4}$ is the volume of $B^{4n-4}$.
Its derivative at $r=1$ is
\begin{equation}\begin{aligned}\label{I'(1)'}
I'(1)=(4n-4)\omega_{4n-4}\int_{B^{4}}f(p)(1-|p|^2)^{2n-3}dV_1(p).
\end{aligned}\end{equation}
The results follows from \eqref{I'(1)} and \eqref{I'(1)'}.
\end{proof}

\begin{proof}[Proof of Proposition \ref{delta 2n+1}]
Because of the invariance of the volume of $Q(\zeta,\delta)$ under ${\rm Sp}(n){\rm Sp}(1)$, we only need to check it for $\zeta=\mathbf{e}_1$.
In order to estimate the volume of $Q(\mathbf{e}_1,\delta)$, we apply  formula \eqref{proj on 1} to characteristic function $\chi_{E(\delta)}$ of
\begin{equation*}\begin{aligned}
E(\delta)=\{q_1\in \mathbb{H};|q_1|\leq1,|1-q_1|^{\frac12}<\delta\},
\end{aligned}\end{equation*}
Noting that $\chi_{E(\delta)}\circ \mathcal{P}(\zeta)=\chi_{Q(\mathbf{e}_1,\delta)}(\zeta)$ for $\zeta\in S^{4n-1}$, where $Q(\mathbf{e}_1,\delta):=\{\eta\in S^{4n-1};|1-\eta_1|^{\frac12}<\delta\}$,
we get
\begin{equation*}\begin{aligned}
\sigma(Q(\mathbf{e}_1,\delta))&=\int_{S^{4n-1}}\chi_{Q(\mathbf{e}_1,\delta)}d\sigma=\int_{S^{4n-1}}\chi_{E(\delta)}\circ \mathcal{P}d\sigma\\
&=c_n\int_{E(\delta)}(1-|q_1|^2)^{2n-3}dV(q_1),
\end{aligned}\end{equation*}
by Lemma \ref{proj on 1}.
Take the coordinates transformation $p_1:=\delta^2(1-q_1)^{-1}$, which maps $E(\delta)$ to
\begin{equation*}\begin{aligned}
\widetilde{E}(\delta):=\{p_1\in \mathbb{H};2{\rm Re}\;p_1\geq\delta^2,|p_1|\geq1\},
\end{aligned}\end{equation*}
and has Jacobian $\frac{\delta^8}{|p_1|^8}$ by
\begin{equation*}\begin{aligned}
1-|q_1|^2=1-\frac{|p_1-\delta^2|^2}{|p_1|^2}=\frac{\delta^2(2{\rm Re}\;p_1-\delta^2)}{|p_1|^2}.
\end{aligned}\end{equation*}
We get
\begin{equation*}\begin{aligned}
\frac{\sigma(Q(\mathbf{e}_1,\delta))}{\delta^{4n+2}}=c_n\int_{\widetilde{E}(\delta)}(2{\rm Re}\;p_1-\delta^2)^{2n-3}\frac{dV(p_1)}{|p_1|^{4n+2}},
\end{aligned}\end{equation*}
where the integral in the right hand side belongs to $[c_1,c_2]$ for two absolute positive constants $c_1,c_2$ independent of $\delta$.
\end{proof}
\subsection{The estimate for non-tangential maximal functions of $\mathcal{QM}$-Poisson integrals}
For $f\in C(B^{4n})$ and $\alpha>1$, the \emph{non-tangential maximal function} of $f$, denoted by $M_{\alpha}f$, is
\begin{equation*}\begin{aligned}
M_{\alpha}f(\zeta)=\sup\{|f(\mathbf{q})|;\mathbf{q}\in A_{\alpha}(\zeta)\},
\end{aligned}\end{equation*}
for $\zeta\in S^{4n-1}$, where $A_{\alpha}(\zeta)$ is the non-tangential approach region defined in \eqref{non tan}. For finite Borel measure $\mu$ on $S^{4n-1}$, let
\begin{equation*}\begin{aligned}
P[\mu](\mathbf{q}):=\int_{S^{4n-1}}P(\mathbf{q},\zeta)d\mu(\zeta).
\end{aligned}\end{equation*}
The maximal function of a Borel measure $\mu$ on $S^{4n-1}$ is
defined by
\begin{equation}\begin{aligned}
M\mu(\zeta)=\sup_{\delta>0}\frac{1}{\sigma(Q(\zeta, \delta))}|\mu|(Q(\zeta, \delta)).
\end{aligned}\end{equation}
Therefore, the maximal function of $f\in L^1(S^{4n-1})$ is $Mf(\zeta)=M(fd\sigma)(\zeta)$.
\begin{prop}\label{MAPleqMu}
For $\alpha>1$, there exists a constant $C$ such that
\begin{equation*}\begin{aligned}
M_{\alpha}P[\mu]\leq CM|\mu|,
\end{aligned}\end{equation*}
for every signed Borel measure $\mu$ on $S^{4n-1}$.
\end{prop}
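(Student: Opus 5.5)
We follow the classical argument for the invariant Poisson kernel on the complex ball (cf. \cite[Theorem 5.4.5]{Rudin}, \cite{Stoll}). The plan is to bound the kernel $P(\mathbf{q},\zeta)$, for $\mathbf{q}\in A_{\alpha}(\eta)$, by a superposition of normalized indicator functions of the non-isotropic balls $Q(\eta,\delta)$, and then integrate against $|\mu|$.

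\textbf{Step 1 (reduction).} Since $P\ge 0$, we have $|P[\mu](\mathbf{q})|\le P[|\mu|](\mathbf{q})$, so we may assume $\mu\ge 0$. Fix $\eta\in S^{4n-1}$ and $\mathbf{q}\in A_{\alpha}(\eta)$, and set $t:=1-|\mathbf{q}|$. If $t$ is bounded below, say $|\mathbf{q}|\le 1/2$, then $P(\mathbf{q},\zeta)\le C$ and the bound $P[\mu](\mathbf{q})\le C|\mu|(S^{4n-1})\le CM\mu(\eta)$ is immediate, taking $\delta$ close to $1$ in the definition of $M$ together with Proposition \ref{delta 2n+1}. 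So assume $|\mathbf{q}|$ is close to $1$.

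\textbf{Step 2 (pointwise kernel estimate).} We show that
$$P(\mathbf{q},\zeta)\le C\,t^{2n+1}\,d(\eta,\zeta)^{-4(2n+1)},\qquad P(\mathbf{q},\zeta)\le C\,t^{-(2n+1)},$$
with $C$ depending only on $\alpha$ and $n$. The second bound follows from $|1-\mathbf{q}^*\zeta|\ge 1-|\mathbf{q}|$. For the first, we use the quasi-triangle inequality of Lemma \ref{distance} in the form
$$d(\eta,\zeta)^2\lesssim |1-\mathbf{q}^*\zeta|+|1-\mathbf{q}^*\eta| ,$$
together with the fact that $\mathbf{q}\in A_{\alpha}(\eta)$ gives $|1-\mathbf{q}^*\eta|\le |\eta-\mathbf{q}|+(1-|\mathbf{q}|)\le(\alpha+1)t$. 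Hence $d(\eta,\zeta)^2\lesssim |1-\mathbf{q}^*\zeta|+t$. When $d(\eta,\zeta)^2\ge C't$ this yields $|1-\mathbf{q}^*\zeta|\gtrsim d(\eta,\zeta)^2$, so
$$P(\mathbf{q},\zeta)\lesssim \bigl(t/d(\eta,\zeta)^4\bigr)^{2n+1}.$$
When $d(\eta,\zeta)^2\le C't$, the second bound, and hence a combined bound of the form $t^{2n+1}\max\bigl(t,d(\eta,\zeta)^2\bigr)^{-2(2n+1)}$, suffices.

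\textbf{Step 3 (dyadic layering).} Let $\delta_0:=(C't)^{1/2}$ and $\delta_k:=2^k\delta_0$, and split $S^{4n-1}$ into $Q(\eta,\delta_0)$ and the annuli $Q(\eta,\delta_{k})\setminus Q(\eta,\delta_{k-1})$, stopping once $\delta_k\ge 1$. On $Q(\eta,\delta_0)$ we use $P\lesssim t^{-(2n+1)}\approx \delta_0^{-(4n+2)}\approx\sigma(Q(\eta,\delta_0))^{-1}$ by Proposition \ref{delta 2n+1}. On the $k$-th annulus,
$$P\lesssim t^{2n+1}\delta_{k-1}^{-4(2n+1)}\approx 2^{-k(4n+2)}\,\delta_k^{-(4n+2)}\approx 2^{-k(4n+2)}\,\sigma(Q(\eta,\delta_k))^{-1}.$$
Summing, we obtain
$$P[\mu](\mathbf{q})\lesssim\sum_{k\ge0}2^{-k(4n+2)}\frac{\mu(Q(\eta,\delta_k))}{\sigma(Q(\eta,\delta_k))}\le C\,M\mu(\eta).$$
Taking the supremum over $\mathbf{q}\in A_{\alpha}(\eta)$ gives $M_{\alpha}P[\mu]\le CM|\mu|$.

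The main obstacle is Step 2. The exponents must match exactly: $\sigma(Q(\eta,\delta))\approx\delta^{4n+2}$ has to balance the kernel power $2n+1$ after squaring, since $d^2$ is comparable to $|1-\mathbf{q}^*\zeta|$. One also has to check carefully that Lemma \ref{distance} applies with one point inside the ball, which it does because the lemma is stated on $\overline{B}^{4n}$. Finally, we must verify that the non-tangential condition gives $|1-\mathbf{q}^*\eta|\lesssim t$ despite noncommutativity; this reduces to $|1-\mathbf{q}^*\eta|=|\eta^*\eta-\mathbf{q}^*\eta|\le|\eta-\mathbf{q}|$ up to the $1-|\mathbf{q}|$ correction, so it causes no difficulty.
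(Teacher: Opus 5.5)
Your argument is correct. Its skeleton matches the paper's: reduce to $\mu\ge 0$, split $S^{4n-1}$ into a central non-isotropic ball of radius $\approx(1-|\mathbf q|)^{1/2}$ and dyadic $d$-annuli, use $\sigma(Q(\eta,\delta))\approx\delta^{4n+2}$, and sum a geometric series.

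The genuine difference is the kernel estimate on the annuli, and there your route is the sound one.

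\begin{itemize}
\item The paper uses the Euclidean triangle inequality, $d(\tau,\zeta)^2\le|\tau-\zeta|<(\alpha+1)|\mathbf q-\tau|$. It then inserts this lower bound for $|\mathbf q-\tau|$ into $P$, in effect writing $P(\mathbf q,\tau)=(1-|\mathbf q|^2)^{2n+1}/|\tau-\mathbf q|^{4n+2}$. That identification holds only for $n=1$. For $n\ge 2$ one only has $|1-\mathbf q^*\tau|=|(\tau-\mathbf q)^*\tau|\le|\tau-\mathbf q|$, and the left side can be much smaller. For example, take $\tau=\mathbf e_1$ and $\mathbf q=(1-\theta^2)(\cos\theta\,\mathbf e_1+\sin\theta\,\mathbf e_2)$. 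Then $|1-\mathbf q^*\tau|\approx\frac32\theta^2$ while $|\tau-\mathbf q|\approx\theta$. So a lower bound on $|\tau-\mathbf q|$ does not control $P$ from above.
\item Your Step 2 applies the triangle inequality for $d$ on $\overline{B}^{4n}$ (Lemma \ref{distance}) with the interior point $\mathbf q$ in the middle. This bounds from below the quantity that actually appears in the kernel: $|1-\mathbf q^*\zeta|\gtrsim d(\eta,\zeta)^2$ once $d(\eta,\zeta)^2\gtrsim 1-|\mathbf q|$. You get the same numerical bound $2^{-k(4n+2)}(1-|\mathbf q|)^{-(2n+1)}$ that the paper writes down, but with a valid justification.
\item Your argument only uses $|1-\mathbf q^*\eta|\lesssim 1-|\mathbf q|$. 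So it also gives the estimate for the larger Kor\'anyi-type regions $\{|1-\mathbf q^*\eta|<\alpha(1-|\mathbf q|)\}\supset A_\alpha(\eta)$.
\end{itemize}

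Two minor fixes are needed. Since $d\le\sqrt2$ on $S^{4n-1}$, the dyadic layers must continue until $\delta_k>\sqrt2$, not $\delta_k\ge 1$, to exhaust the sphere; the paper's condition $2^N\beta>2$ does exactly this. Likewise, in Step 1 take $\delta>\sqrt2$, so that $Q(\eta,\delta)=S^{4n-1}$ and $\mu(S^{4n-1})\le M\mu(\eta)$.
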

\begin{proof}
Fix $\zeta\in S^{4n-1}$ such that $M\mu(\zeta)<\infty$. For fixed $\mathbf{q}\in A_{\alpha}(\zeta)$,
set
\begin{equation}\begin{aligned}\label{def beta}
\beta:=\alpha(1-|\mathbf{q}|).
\end{aligned}\end{equation}
Let $N$ be the smallest integer such that $2^N\beta>2$.
Set $Q_0:=\{\tau\in S^{4n-1};d(\tau,\zeta)<\beta^{\frac12}\}$ and
\begin{equation*}\begin{aligned}
Q_k:=\{\tau\in S^{4n-1};2^{\frac{k-1}{2}}\beta^{\frac12}\leq d(\tau,\zeta)<2^\frac{k}{2}\beta^{\frac12}\},\quad \quad k=1,\dots,N.
\end{aligned}\end{equation*}
Then, $S^{4n-1}$ is the disjoint union of $Q_k$. For $\tau\in Q_0$, we have
\begin{equation*}\begin{aligned}
P(\mathbf{q},\tau)=\frac{(1-|\mathbf{q}|^2)^{2n+1}}{|\tau^*-\mathbf{q}^*|^{4n+2}}\leq\frac{2^{2n+1}}{(1-|\mathbf{q}|)^{2n+1}},
\end{aligned}\end{equation*}
and so
\begin{equation}\begin{aligned}\label{estimate 1}
\int_{Q_0}P(\mathbf{q},\tau)d\mu(\tau)\leq\frac{(2\alpha)^{2n+1}}{\beta^{2n+1}}\int_{Q(\zeta,\beta^{\frac12})}d|\mu|\lesssim (2\alpha)^{2n+1}M|\mu|(\zeta),
\end{aligned}\end{equation}
by the estimate in Proposition \ref{delta 2n+1}.
Since for $\mathbf{q}\in A_{\alpha}(\zeta)$, we have that for $\tau\in Q_k$
\begin{equation}\begin{aligned}\label{eta-zeta}
d(\tau,\zeta)^2=|\tau-\zeta|\leq|\mathbf{q}-\zeta|+|\mathbf{q}-\tau|<\alpha(1-|\mathbf{q}|)+|\mathbf{q}-\tau|<(\alpha+1)|\mathbf{q}-\tau|,
\end{aligned}\end{equation}
i.e.
\begin{equation}\begin{aligned}\label{q-tau}
|\mathbf{q}-\tau|>(1+\alpha)^{-1}d(\tau,\zeta)^2\geq C_{\alpha}2^{k-1}\beta.
\end{aligned}\end{equation}
Substituting \eqref{def beta} and \eqref{q-tau} into the definition \eqref{def poisson k} of $\mathcal{QM}$-Poisson kernel, we obtain
\begin{equation}\begin{aligned}\label{estimate 2}
\left|\int_{Q_k}P(\mathbf{q},\tau)d\mu(\tau)\right|&\leq\int_{Q_k}\frac{(2\beta)^{2n+1}}{\alpha^{2n+1}(C_{\alpha}2^{k-1}\beta)^{4n+2}}d|\mu|\\
&\lesssim \frac{1}{(2^{k})^{2n+1}}\frac{\int_{Q(\zeta,(2^k\beta)^{\frac12})}d|\mu|}{(2^k\beta)^{2n+1}}\leq \frac{1}{2^{k(2n+1)}}M|\mu|(\zeta).
\end{aligned}\end{equation}
So the summation of \eqref{estimate 1} and \eqref{estimate 2} over $k$ is bounded by $M|\mu|(\zeta)$. The estimate is proved.
\end{proof}

\subsection{Proof of Theorem \ref{Fatou}}
For $Q:=Q(\zeta, \delta)$, denote $3Q:=\{\eta\in S^{4n-1};d(\eta,\zeta)<3\delta\}$.
We can prove the following Vitali covering lemma exactly as in \cite[lemma 5.2.3]{Rudin}.
\begin{lem}\label{vitali}
If $E$ is the union of a finite collection $\Phi$ of balls $Q \subset S^{4n-1}$, then
$\Phi$ has a disjoint subcollection $\Gamma$ such that $E \subset \bigcup_{\Gamma} 3Q$ and $\sigma(E) \leq A_3 \sum_{\Gamma} \sigma(Q)$ with $A_3=\sup_{Q}\frac{\sigma(3Q)}{\sigma(Q)}<\infty$.
\end{lem}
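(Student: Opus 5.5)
The argument is the standard greedy Vitali selection, carried out with the distance $d$ of Lemma \ref{distance} and the volume estimate of Proposition \ref{delta 2n+1}.

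\emph{Step 1: $A_3<\infty$.} By Proposition \ref{delta 2n+1}, $\sigma(Q(\zeta,\delta))\approx\delta^{4n+2}$ for $0<\delta<1$, with constants independent of $\zeta$. For $3\delta<1$ this gives $\sigma(3Q)/\sigma(Q)\lesssim 3^{4n+2}$. The only care needed is the range where $3\delta\ge 1$, since the estimate is stated only for $\delta<1$. There we simply bound $\sigma(3Q)\le 1$ and use $\sigma(Q)\gtrsim \delta^{4n+2}\ge 3^{-4n-2}$ when $\frac13\le\delta<1$. In all cases the ratio is bounded by a constant depending only on $n$, so $A_3<\infty$. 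If radii $\delta\ge1$ are allowed, $\sigma(Q)$ is bounded below by $\sigma(Q(\zeta,\frac12))$, so the bound persists.

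\emph{Step 2: greedy selection.} Order the finitely many balls of $\Phi$ as $Q_1=Q(\zeta_1,\delta_1),\dots,Q_m=Q(\zeta_m,\delta_m)$ with $\delta_1\ge\delta_2\ge\dots\ge\delta_m$. Put $Q_1$ into $\Gamma$. Inductively, put $Q_k$ into $\Gamma$ if it is disjoint from all balls already chosen. Then $\Gamma$ is disjoint by construction.

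\emph{Step 3: covering by $3Q$.} Take any $Q_k\in\Phi$ not in $\Gamma$. Then $Q_k$ meets some chosen $Q_j$ with $j<k$, so $\delta_j\ge\delta_k$. Pick $\eta\in Q_k\cap Q_j$. For any $\tau\in Q_k$, the triangle inequality for $d$ (Lemma \ref{distance}) together with the symmetry of $d$ gives
\[
d(\tau,\zeta_j)\le d(\tau,\zeta_k)+d(\zeta_k,\eta)+d(\eta,\zeta_j)<2\delta_k+\delta_j\le 3\delta_j .
\]
Symmetry holds because $|1-\mathbf{a}^*\mathbf{b}|=|\overline{1-\mathbf{a}^*\mathbf{b}}|=|1-\mathbf{b}^*\mathbf{a}|$. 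Hence $Q_k\subset 3Q_j$, and therefore $E\subset\bigcup_\Gamma 3Q$.

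\emph{Step 4: measure bound.} Using subadditivity and then the definition of $A_3$,
\[
\sigma(E)\le\sum_\Gamma\sigma(3Q)\le A_3\sum_\Gamma\sigma(Q).
\]

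The only step needing attention is the doubling bound in Step 1, specifically the range of radii outside the regime covered by Proposition \ref{delta 2n+1}; the rest is routine.
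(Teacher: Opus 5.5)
Your proposal is correct and is essentially the paper's argument: the paper proves this lemma by deferring to Rudin's Lemma 5.2.3, which is the same greedy selection (largest balls first, the triangle inequality for $d$ giving $Q_k\subset 3Q_j$, then subadditivity). Your Step 1 also spells out why $A_3<\infty$ follows from Proposition \ref{delta 2n+1}, including the range $3\delta\ge 1$, which the paper leaves implicit.
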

Using this Vitali covering lemma, we can prove the following weak-$(1,1)$ estimate for maximal function exactly as in \cite[theorem 5.2.4, theorem 5.2.7]{Rudin}.
\begin{prop}\label{lem A3}
For $f\in L^1(S^{4n-1})$. Then, $\sigma\{Mf>t\}\leq \frac{A_3}{t}||f||_{L^1(S^{4n-1})}$ for every $t>0$.
\end{prop}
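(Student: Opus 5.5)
The plan is the standard Vitali-covering argument for the weak-$(1,1)$ bound, run with the non-isotropic balls $Q(\zeta,\delta)$ and the doubling constant $A_3$ of Lemma \ref{vitali}. First, $A_3<\infty$. Proposition \ref{delta 2n+1} gives $\sigma(Q(\zeta,\delta))\approx\delta^{4n+2}$ for $0<\delta<1$. For $3\delta\geq 1$ (and $\delta<1$) the ratio $\sigma(3Q)/\sigma(Q)$ is at most $1/\sigma(Q(\zeta,1/3))$, which is bounded below uniformly by the same estimate. Hence $A_3\lesssim 3^{4n+2}$ uniformly in $\zeta$ and $\delta$. I would also restrict to $\delta$ in a bounded range in the supremum defining $Mf$. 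This is harmless: $d\le \sqrt2$ on the sphere, so large balls are the whole sphere, and they contribute at most $\|f\|_{L^1}$ to $Mf$.

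Fix $t>0$ and let $K\subset\{Mf>t\}$ be compact; it suffices to bound $\sigma(K)$. The level set is open. Indeed, if $|f|d\sigma(Q(\zeta,\delta))>t\,\sigma(Q(\zeta,\delta))$, then the same holds for $Q(\zeta',\delta)$ with $\zeta'$ near $\zeta$. This follows from continuity of $d$ and dominated convergence of the characteristic functions, since $\sigma(\partial Q)=0$ because $\partial Q$ lies in a level set of the real-analytic function $|1-\mathbf{a}^*\eta|$. One may instead pass to slightly smaller balls, which avoids checking openness. For each $\zeta\in K$ choose $\delta_\zeta$ with $\int_{Q(\zeta,\delta_\zeta)}|f|\,d\sigma>t\,\sigma(Q(\zeta,\delta_\zeta))$. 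By compactness, finitely many such balls cover $K$; call this collection $\Phi$.

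Apply Lemma \ref{vitali} to $\Phi$ to obtain a disjoint subcollection $\Gamma$ with $\sigma(E)\le A_3\sum_\Gamma\sigma(Q)$, where $E=\bigcup\Phi\supset K$. Then
\[
\sigma(K)\le A_3\sum_{Q\in\Gamma}\sigma(Q)< \frac{A_3}{t}\sum_{Q\in\Gamma}\int_Q|f|\,d\sigma\le\frac{A_3}{t}\|f\|_{L^1(S^{4n-1})},
\]
the last step by disjointness. Inner regularity of $\sigma$ then gives the claim for $\{Mf>t\}$.

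The only real obstacle is the hypothesis of Lemma \ref{vitali}. It needs $d$ to be a genuine (quasi-)metric, so that intersecting balls with comparable radii are contained in the tripled ball, and this is provided by Lemma \ref{distance}. It also needs the uniform doubling constant, which is exactly where Proposition \ref{delta 2n+1} enters. Once these are in place the rest is routine.
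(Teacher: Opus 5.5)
Your proposal is correct and is the argument the paper uses; the paper gives no details and simply cites Rudin's Theorems 5.2.4 and 5.2.7. That argument takes a compact $K\subset\{Mf>t\}$, covers it by finitely many non-isotropic balls on which the average of $|f|$ exceeds $t$, applies Lemma \ref{vitali}, uses disjointness, and concludes by inner regularity of $\sigma$. Your checks that $A_3<\infty$ via Proposition \ref{delta 2n+1} and that the level set is open just make explicit what the paper leaves implicit; the restriction on $\delta$ is unnecessary, because your bound $\sigma(3Q)/\sigma(Q)\le 1/\sigma(Q(\zeta,1/3))$ already holds for every $\delta\ge 1/3$.
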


For $f\in L^1(S^{4n-1})$, a point $\zeta\in S^{4n-1}$ is called a \emph{Lebesgue point} of $f$ if
\begin{equation}\label{eq:leb1}
\lim_{\delta \to 0} \frac{1}{\sigma(Q(\zeta,\delta))} \int_{Q(\zeta,\delta)} |f - f(\zeta)| \, d\sigma = 0.
\end{equation}

As in \cite{Rudin}, we can prove following property by Proposition \ref{lem A3} in the standard way by using the weak-$(1,1)$ estimate.
\begin{prop}\label{dense}
(1)\;If $f \in L^1(S^{4n-1})$, then
\begin{equation}
\label{eq:leb2}
f(\zeta) = \lim_{\delta \to 0} \frac{1}{\sigma(Q(\zeta,\delta))} \int_{Q(\zeta,\delta)} f \, d\sigma ,
\end{equation}
for any Lebesgue point $\zeta \in S^{4n-1}$ of $f$.\\
(2)\;For $f\in L^1(S^{4n-1})$, the compliment of the set of the Lebesgue points has measure zero.
\end{prop}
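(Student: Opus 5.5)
The statement is Proposition \ref{dense}. I would follow the standard Lebesgue differentiation argument, with Euclidean balls replaced by the nonisotropic balls $Q(\zeta,\delta)$ and the weak-$(1,1)$ bound of Proposition \ref{lem A3} as the engine.

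For part (1), the claim follows directly from the definition \eqref{eq:leb1}. I would write
\begin{equation*}
\left|\frac{1}{\sigma(Q(\zeta,\delta))}\int_{Q(\zeta,\delta)} f\,d\sigma-f(\zeta)\right|\leq\frac{1}{\sigma(Q(\zeta,\delta))}\int_{Q(\zeta,\delta)}|f-f(\zeta)|\,d\sigma .
\end{equation*}
The right-hand side tends to $0$ as $\delta\to0$ whenever $\zeta$ is a Lebesgue point, which gives \eqref{eq:leb2}.

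For part (2), I would define for $f\in L^1(S^{4n-1})$ the oscillation function
\begin{equation*}
(Tf)(\zeta):=\limsup_{\delta\to0}\frac{1}{\sigma(Q(\zeta,\delta))}\int_{Q(\zeta,\delta)}|f-f(\zeta)|\,d\sigma .
\end{equation*}
The goal is to show $Tf=0$ almost everywhere.

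\textbf{Continuous case.} If $g\in C(S^{4n-1})$, then $Tg\equiv0$. The reason is that $Q(\zeta,\delta)$ is contained in the Euclidean ball $\{|\eta-\zeta|<\delta^2\}$, since $d(\eta,\zeta)^2=|1-\zeta^*\eta|=|\zeta-\eta|$ on the sphere (using $|\zeta|=1$). Uniform continuity of $g$ then makes the averages of $|g-g(\zeta)|$ tend to $0$.

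\textbf{General case.} Given $\varepsilon>0$, I would use density of $C(S^{4n-1})$ in $L^1$ to pick $g$ continuous with $\|f-g\|_{L^1}<\varepsilon$. With $h=f-g$ I have $Tf\leq Tg+Th=Th$, and
\begin{equation*}
Th\leq Mh+|h|,
\end{equation*}
where $Mh$ is the maximal function. Hence
\begin{equation*}
\{Tf>2t\}\subset\{Mh>t\}\cup\{|h|>t\}.
\end{equation*}
The first set has measure at most $A_3\varepsilon/t$ by Proposition \ref{lem A3}, and the second at most $\varepsilon/t$ by Chebyshev's inequality. Letting $\varepsilon\to0$ shows $\sigma\{Tf>2t\}=0$ for every $t>0$, so $Tf=0$ almost everywhere.

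Here $f(\zeta)$ refers to a fixed representative of $f$, so the Lebesgue point set depends on that representative only up to a null set. Measurability of $\{Tf>2t\}$ is not a concern: the set is contained in a union of sets of arbitrarily small measure, so it has outer measure zero, and $\sigma$ is complete.

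The main point needing care is that Proposition \ref{lem A3} is available, which in turn rests on the doubling constant $A_3<\infty$. That constant follows from Proposition \ref{delta 2n+1}, which gives $\sigma(3Q)/\sigma(Q)\approx 3^{4n+2}$ uniformly in the centre and radius. For $\delta\geq 1/3$, where Proposition \ref{delta 2n+1} does not directly apply, I would use the trivial bound $\sigma(3Q)\leq1$. Everything else is routine.
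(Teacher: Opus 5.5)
Your argument is the standard Lebesgue differentiation proof driven by the weak-$(1,1)$ bound of Proposition \ref{lem A3}. That is exactly what the paper invokes (it just refers to Rudin), and the overall structure is correct: part (1), $Tf\leq Tg+Th$, $Th\leq Mh+|h|$, and the Chebyshev/outer-measure step all hold.

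One justification is wrong, though. For $n\geq 2$ it is not true that $|1-\zeta^*\eta|=|\zeta-\eta|$ on the sphere. What holds is only
\begin{equation*}
|1-\zeta^*\eta|=|\zeta^*(\zeta-\eta)|\leq|\zeta-\eta|.
\end{equation*}
(The paper's display \eqref{eta-zeta} writes the same equality, but only this inequality is used there.) That inequality goes the wrong way for your inclusion, and $Q(\zeta,\delta)\subset\{|\eta-\zeta|<\delta^2\}$ actually fails. For example, take $\zeta=\mathbf{e}_1$ and $\eta=\cos\theta\,\mathbf{e}_1+\sin\theta\,\mathbf{e}_2$. Then $d(\zeta,\eta)^2=1-\cos\theta$ but $|\zeta-\eta|=\sqrt{2}\,d(\zeta,\eta)$, which is much larger than $\delta^2$ when $d(\zeta,\eta)<\delta$ is small.

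The correct containment comes from
\begin{equation*}
|\zeta-\eta|^2=2\,{\rm Re}(1-\zeta^*\eta)\leq 2\,d(\zeta,\eta)^2,
\end{equation*}
which gives $Q(\zeta,\delta)\subset\{|\eta-\zeta|<\sqrt{2}\,\delta\}$. These Euclidean balls still shrink uniformly in $\zeta$ as $\delta\to0$. So uniform continuity still gives $Tg\equiv0$ for continuous $g$, and the rest of your proof goes through unchanged.
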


\begin{lem}\label{lem D=0}
Let $\mu$ be a Borel measure with $|\mu|(S^{4n-1})<\infty$. Suppose that $D|\mu|(\zeta):=\lim_{\delta\rightarrow 0}\frac{|\mu|(Q(\zeta,\delta))}{\sigma(Q(\zeta,\delta))}=0$ at a point $\zeta\in S^{4n-1}$. Then
\begin{equation*}\begin{aligned}
\lim_{\substack{\mathbf{q}\rightarrow\zeta\\ \mathbf{q}\in A_{\alpha}(\zeta)}} P[\mu](\mathbf{q})=0,
\end{aligned}\end{equation*}
for every $\alpha>0.$
\end{lem}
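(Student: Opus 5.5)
The plan is to follow the classical argument of Rudin's Theorem 5.4.8, now built on the non-isotropic balls $Q(\zeta,\delta)$ and the estimates of Proposition \ref{MAPleqMu}. We may assume $\mu$ is positive: replacing $\mu$ by $|\mu|$ only increases $|P[\mu]|$, since $P\geq 0$, and $D|\mu|(\zeta)=0$ is exactly the hypothesis.

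Fix $\varepsilon>0$. Since $D\mu(\zeta)=0$, there is $\delta_0>0$ such that $\mu(Q(\zeta,\delta))\leq\varepsilon\,\sigma(Q(\zeta,\delta))$ for all $\delta<\delta_0$. Split $\mu=\mu_1+\mu_2$, where $\mu_1$ is the restriction of $\mu$ to $Q(\zeta,\delta_0)$ and $\mu_2=\mu-\mu_1$ is supported on the set $\{d(\tau,\zeta)\geq\delta_0\}$, that is, on $|\tau-\zeta|\geq\delta_0^2$.

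For $\mu_2$, note that for $\mathbf{q}\in A_\alpha(\zeta)$ close to $\zeta$ we have $|\mathbf{q}-\tau|\geq\delta_0^2/2$ on the support of $\mu_2$. Hence $P(\mathbf{q},\tau)\leq (1-|\mathbf{q}|^2)^{2n+1}(2/\delta_0^2)^{4n+2}$ uniformly, and $P[\mu_2](\mathbf{q})\to0$ as $\mathbf{q}\to\zeta$. This is the same computation as in Proposition \ref{lem pf}(4).

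For $\mu_1$, rerun the dyadic decomposition from the proof of Proposition \ref{MAPleqMu}, but measure the localized mass at $\zeta$ instead of the full maximal function. For $\mathbf{q}\in A_\alpha(\zeta)$ set $\beta=\alpha(1-|\mathbf{q}|)$, with the shells $Q_0,Q_k$ as there. Each term is bounded by a constant times $2^{-k(2n+1)}\mu_1(Q(\zeta,(2^k\beta)^{1/2}))/\sigma(Q(\zeta,(2^k\beta)^{1/2}))$. The volumes are comparable to $\delta^{4n+2}$ by Proposition \ref{delta 2n+1}. Since $\mu_1$ lives in $Q(\zeta,\delta_0)$, each ratio is at most $\varepsilon$ times a constant. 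If $(2^k\beta)^{1/2}\geq\delta_0$, bound $\mu_1(Q(\zeta,\cdot))\leq\mu(Q(\zeta,\delta_0))\leq\varepsilon\sigma(Q(\zeta,\delta_0))$ and use the monotonicity of $\sigma$. Summing the geometric series gives $P[\mu_1](\mathbf{q})\leq C_\alpha\varepsilon$, uniformly in $\mathbf{q}\in A_\alpha(\zeta)$. Therefore $\limsup P[\mu](\mathbf{q})\leq C_\alpha\varepsilon$, and letting $\varepsilon\to0$ proves the claim.

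The main obstacle is a bookkeeping point: Proposition \ref{MAPleqMu} is stated with $M|\mu|$. What we actually need is the inequality $P[\nu](\mathbf{q})\leq C\sup_{\delta}\nu(Q(\zeta,\delta))/\sigma(Q(\zeta,\delta))$ for each fixed $\zeta$ and $\mathbf{q}\in A_\alpha(\zeta)$, which is precisely what that proof establishes. The argument there uses $\alpha>1$ only through the constant $C_\alpha$ in \eqref{q-tau}. So the statement "for every $\alpha>0$" is harmless: $A_\alpha$ shrinks as $\alpha$ decreases, and it suffices to treat $\alpha>1$.
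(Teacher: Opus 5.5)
Your proposal is correct and is essentially the paper's argument: you restrict $\mu$ to $Q(\zeta,\delta_0)$ and control that piece by the pointwise estimate of Proposition \ref{MAPleqMu}, using $M|\mu_0|(\zeta)\le\varepsilon$, while the remainder is killed by a uniform kernel bound of order $(1-|\mathbf{q}|^2)^{2n+1}$; your remark that only $\alpha>1$ matters (in fact $A_\alpha(\zeta)=\emptyset$ for $\alpha\le 1$) covers a point the paper skips. One caution, which the paper shares: for $n\ge 2$ one only has $|1-\mathbf{q}^*\tau|\le|\mathbf{q}-\tau|$, so the far-part kernel bound should be justified by $|1-\mathbf{q}^*\tau|\ge|1-\zeta^*\tau|-|\mathbf{q}-\zeta|\ge\delta_0^2/2$ rather than by $|\mathbf{q}-\tau|\ge\delta_0^2/2$, and this gives exactly the bound you wrote.
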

\begin{proof}
Since $D|\mu|(\zeta)=0$, for given $\varepsilon>0$, there exists a $\delta_0>0$ such that
\begin{equation}\begin{aligned}\label{mu es1}
|\mu|(Q(\zeta,\delta))<\varepsilon\sigma(Q(\zeta,\delta))<\varepsilon,
\end{aligned}\end{equation}
for $0<\delta<\delta_0$. Denote $\mu_0=\mu|_{Q_0}$, where $Q_0=Q(\zeta,\delta_0)$.
For $\tau\in S^{4n-1}\setminus Q_0$ and $\mathbf{q}\in A_{\alpha}(\zeta)$, we have
\begin{equation*}\begin{aligned}
\delta_0^2<|\tau-\zeta|<(\alpha+1)|\mathbf{q}-\tau|,
\end{aligned}\end{equation*}
by \eqref{eta-zeta}.
Thus,
\begin{equation*}\begin{aligned}
P(\mathbf{q},\tau)\leq\left(\frac{\alpha+1}{\delta_0^2}\right)^{4n+2}(1-|\mathbf{q}|^2)^{2n+1}.
\end{aligned}\end{equation*}
On the other hand, $M|\mu_0|(\zeta)< \varepsilon$ by \eqref{mu es1}. Thus,
\begin{equation*}\begin{aligned}
\left|P[\mu](\mathbf{q})\right|&\leq\int_{S^{4n-1}}P(\mathbf{q},\tau)d\left|\mu_0\right|+\int_{S^{4n-1}\backslash Q_0}P(\mathbf{q},\tau)d\left|\mu\right|\\
&\leq CM|\mu_0|(\zeta)+\int_{S^{4n-1}\setminus Q_0}\left(\frac{\alpha+1}{\delta_0^2}\right)^{4n+2}(1-|\mathbf{q}|^2)^{2n+1}d|\mu|\\
&\leq C\varepsilon+\int_{S^{4n-1}\setminus Q_0}\left(\frac{\alpha+1}{\delta_0^2}\right)^{4n+2}(1-|\mathbf{q}|^2)^{2n+1}d|\mu|\lesssim \varepsilon,
\end{aligned}\end{equation*}
as $A_{\alpha}(\zeta)\ni\mathbf{q}\rightarrow\zeta$. The lemma is proved.
\end{proof}

\begin{proof}[Proof of Theorem \ref{Fatou}]
 For fixed Lebesgue point $\zeta$ of $f\in L^1(S^{4n-1})$, let $\mu_{\zeta}$ be the measure on $S^{4n-1}$ defined by $\mu_{\zeta}(E):=\int_{E}|f(\eta)-f(\zeta)|d\sigma(\eta)$ for a measurable set $E$. Then
\begin{equation*}\begin{aligned}
|P[f](\mathbf{q})-f(\zeta)|=\left|\int_{S^{4n-1}}P(\mathbf{q},\eta)(f(\eta)-f(\zeta))d\sigma(\eta)\right|\leq P[\mu_{\zeta}](\mathbf{q}).
\end{aligned}\end{equation*}
But for a Lebesgue point $\zeta$ of $f$, we have
\begin{equation*}\begin{aligned}
D\mu_{\zeta}(\zeta)=\lim_{\delta\rightarrow 0^+}\frac{\mu(Q(\zeta,\delta))}{\sigma(Q(\zeta,\delta))}=\lim_{\delta\rightarrow 0^+}\frac{\int_{Q(\zeta,\delta)}|f(\eta)-f(\zeta)|d\sigma(\eta)}{\sigma(Q(\zeta,\delta))}=0.
\end{aligned}\end{equation*}
Then, we can apply Lemma \ref{lem D=0} to $\mu_{\zeta}$ to get
\begin{equation*}\begin{aligned}
\lim_{\substack{\mathbf{q}\rightarrow\zeta\\ \mathbf{q}\in A_{\alpha}(\zeta)}} |P[f](\mathbf{q})-f(\zeta)|\leq \lim_{\substack{\mathbf{q}\rightarrow\zeta\\ \mathbf{q}\in A_{\alpha}(\zeta)}} P[\mu_{\zeta}](\mathbf{q})=0.
\end{aligned}\end{equation*}
Namely, the nontangential limit of $P[f]$ at point $\zeta$ exists. Since the compliment of the set of Lebesgue points of $f$ has measure zero by Proposition \ref{dense}, nontangential limit of $P[f]$ exists almost everywhere.
\end{proof}

\appendix
\section{The Casimir operators of Sp$(n)$ and Sp$(1)$}
\begin{proof}[Proof of Proposition \ref{Ap 2}]
It is direct to check that
\begin{equation}\begin{aligned}\label{piH2}
\pi(H)^2= ({R^{0'}}-{R^{1'}})^2,\qquad \pi(U^{-}) \pi(U)= \mathfrak{D}+R^{1'},\\
\end{aligned}\end{equation}
by \eqref{u H}.
Substituting \eqref{piH2} into the expression of $\Omega^R$ in \eqref{casimir element 1}, we get the second identity in \eqref{omegaLex}.

By \eqref{Xij Yij Ui}, it is direct to check that
\begin{align*}
2\sum_{i} \pi(H_{i,i})&=2\sum_{i,A'}\left( z_{i}^{A'}\nabla_i^{A'}-z_{n+i}^{A'}\nabla_{n+i}^{A'}\right),\\
\sum_{i} \pi(H_{i,i})^2
&= \sum_{i,A',B'} \left( z_{i}^{A'} z_{i}^{B'} \nabla_{i}^{A'} \nabla_{i}^{B'}+ z_{n+i}^{A'} z_{n+i}^{B'} \nabla_{n+i}^{A'} \nabla_{n+i}^{B'} - 2 z_{i}^{A'} z_{n+i}^{B'} \nabla_{i}^{A'} \nabla_{n+i}^{B'} \right)
 +R^{0'}+R^{1'}\\
4\sum_{i} \pi(U_i^{-})\pi(U_i)&= 4\sum_{i}\left(z_{i}^{A'} z_{n+i}^{B'} \nabla_{n+i}^{A'} \nabla_{i}^{B'} + z_{n+i}^{A'} \nabla_{n+i}^{A'}\right),
\end{align*}
by using Lemma \ref{lem nabla}.
Their sum gives us
\begin{equation}\begin{aligned}\label{piHU}
\sum_{i}\left( \pi(H_{i,i})^2 \right.&\left.+ 2\pi(H_{i,i}) + 4\pi(U_i^{-})\pi(U_i)\right)
= \sum_{A,A',B'}z_{A}^{A'} z_{A}^{B'} \nabla_{A}^{A'} \nabla_{A}^{B'}\\
&+2\sum_{i,A',B'}\left(z_{i}^{A'} z_{n+i}^{B'} \nabla_{n+i}^{A'} \nabla_{i}^{B'}+z_{i}^{[A'} z_{n+i}^{B']} \nabla_{n+i}^{A'} \nabla_{i}^{B'}\right)
+ 3 \bigl(R^{0'}+R^{1'}\bigr),
\end{aligned}\end{equation}
where we use
\begin{equation}\begin{aligned}\label{znabla}
\sum_{A',B'}z_{j}^{A'} z_{n+j}^{B'} \nabla_{n+i}^{A'} \nabla_{i}^{B'}-\sum_{A',B'}z_{j}^{A'} z_{n+j}^{B'} \nabla_{i}^{A'} \nabla_{n+i}^{B'}=\sum_{A',B'}z_{j}^{[A'} z_{n+j}^{B']} \nabla_{n+i}^{A'} \nabla_{i}^{B'},
\end{aligned}\end{equation}
by relabelling indices $A',B'$ for the second summation. On other hand, it follows from definition \eqref{Xij Yij Ui}  that
\begin{equation}\begin{split}\label{piXXYY}
\sum_{i\neq j}\pi(X_{i,j}^-)\pi(X_{i,j})
=\sum_{A',B',i\neq j}\Bigl( z_{i}^{A'} z_{j}^{B'} \nabla_{j}^{A'} \nabla_{i}^{B'}
+ z_{n+j}^{A'} &z_{n+i}^{B'}\nabla_{n+i}^{A'} \nabla_{n+j}^{B'}- 2z_{i}^{A'} z_{n+i}^{B'} \nabla_{j}^{A'} \nabla_{n+j}^{B'}\Bigr) \\
&+(n-1)\sum_{j,A'}\Bigr(z_j^{A'}\nabla_{j}^{A'}+z_{n+j}^{A'}\nabla_{n+j}^{A'}\Bigr),\\
\end{split}\end{equation}
and
\begin{equation}\begin{aligned}\label{pi2}
\sum_{i\neq j} \pi(Y_{i,j}^{-})\pi(Y_{i,j})
=&\sum_{A',B',i\neq j} \Bigl( 2z_{j}^{A'} z_{n+i}^{B'} \nabla_{n+i}^{A'} \nabla_{j}^{B'}
+ 2z_{j}^{A'} z_{n+j}^{B'} \nabla_{n+i}^{A'} \nabla_{i}^{B'}\Bigr)+2(n-1)\sum_{i,A'}z_{n+i}^{A'}\nabla_{n+i}^{A'},
\end{aligned}\end{equation}
by suitably relabelling indices.
Noting that
\begin{equation}\begin{aligned}\label{AneqB}
(n-1)\sum_{i}\pi(H_{i,i})=&(n-1)\sum_{i}(z_i^{A'}\nabla_{i}^{A'}-z_{n+i}^{A'}\nabla_{n+i}^{A'}),\\
\sum_{A\neq B,A',B'}z_{A}^{A'} z_{B}^{B'} \nabla_{B}^{A'} \nabla_{A}^{B'}=&\sum_{i\neq j,A',B'}\Bigl( z_{i}^{A'} z_{j}^{B'} \nabla_{j}^{A'} \nabla_{i}^{B'}
+ z_{n+j}^{A'} z_{n+i}^{B'} \nabla_{n+i}^{A'} \nabla_{n+j}^{B'}+ 2z_{j}^{A'} z_{n+i}^{B'} \nabla_{n+i}^{A'} \nabla_{j}^{B'}\Bigl)\\
&\qquad\qquad+2\sum_{i}z_{i}^{A'} z_{n+i}^{B'} \nabla_{n+i}^{A'} \nabla_{i}^{B'}.
\end{aligned}\end{equation}
Applying \eqref{AneqB} to the summation of \eqref{piXXYY} and \eqref{pi2}, together with \eqref{znabla}, we get
\begin{equation}\begin{aligned}\label{piXY}
&\sum_{i\neq j}\left[(\pi(X_{i,j}^-)\pi(X_{i,j})
+ \pi(Y_{i,j}^{-})\pi(Y_{i,j})\right]+2\sum_{i,A',B'}\left[z_{i}^{A'} z_{n+i}^{B'} \nabla_{n+i}^{A'} \nabla_{i}^{B'}+(n-1)\pi(H_{i,i})\right] \\
=&\sum_{A\neq B,A',B'}z_{A}^{A'} z_{B}^{B'} \nabla_{B}^{A'} \nabla_{A}^{B'}+2\sum_{i\neq j,A',B'}z_{j}^{[A'} z_{n+j}^{B']} \nabla_{n+i}^{A'}\nabla_{i}^{B'}
+ 2(n-1)\bigl(R^{0'}+R^{1'}\bigr).
\end{aligned}\end{equation}
Now the summation of \eqref{piHU} and \eqref{piXY} gives us
\begin{equation}\begin{aligned}
\triangle^L=&\sum_{A, B,A',B'}z_{A}^{A'} z_{B}^{B'} \nabla_{B}^{A'} \nabla_{A}^{B'}+2\sum_{i, j, A',B'}z_{j}^{[A'} z_{n+j}^{B']} \nabla_{n+i}^{A'}\nabla_{i}^{B'}
+(2n+1)\bigl(R^{0'}+R^{1'}\bigr)\\
=&2\mathfrak{D}-\frac{|\mathbf{q}|^2}{2}\triangle_0+(R^{0'})^2+(R^{1'})^2+ 2n\bigl(R^{0'}+R^{1'}\bigr),
\end{aligned}\end{equation}
by definition of $\triangle_L$ in \eqref{casimir element 1} and $\mathfrak{D}$ in \eqref{def D R},
\begin{equation}\begin{aligned}
\sum_{i,j,A',B'}z_{j}^{[A'} z_{n+j}^{B']} \nabla_{n+i}^{A'}\nabla_{i}^{B'}
=&\sum_{i,j,A',B'}(z_{j}^{A'} z_{n+j}^{B'}-z_{j}^{B'} z_{n+j}^{A'}) \nabla_{n+i}^{A'}\nabla_{i}^{B'}\\
=&\sum_{i,A,B,A',B'}z_{A}^{A'} z_{B}^{B'}J_A^B\nabla_{n+i}^{A'}\nabla_{i}^{B'}\\
=&\sum_{i,A',B'}\varepsilon^{A'B'}|\mathbf{q}|^2\nabla_{n+i}^{A'}\nabla_{i}^{B'}=-\frac{|\mathbf{q}|^2}{4}\triangle_0,
\end{aligned}\end{equation}
by definition \eqref{def J} of $J$ and \eqref{z jab}-\eqref{z jab2}. \eqref{omegaLex} is proved.
\end{proof}

\section*{Declarations}
\subsection*{Funding}
This work is partially supported by the National Natural Science Foundation of China under Grant No. 12471080.


\begin{thebibliography}{99}
\bibitem{Ahern}
Ahern, P., Bruna, J. and Cascante, C.,
\newblock{$H^p$-theory for generalized $M$-harmonic functions in the unit ball}, {\it Indiana Univ. Math. J.} \textbf{45} (1996), 103-145.

\bibitem{Ahmed}
Ahmed, A. and Khalfallah, A.,
\newblock{Riesz-Fej\'{e}r inequalities for hyperbolic harmonic functions in the unit ball}, {\it J. Math. Anal. Appl.} \textbf{563} (2026), Ariticle 130826.


\bibitem{Co}
Ahrens, J., Cowling, M. G., Martini, A., M\"{u}ller, D.,
\newblock{Quaternionic spherical harmonics and a sharp multiplier theorem on quaternionic spheres}, {\it Math. Z.} \textbf{294} (2020), 1659-1686.


\bibitem{Burgeth}
Burgeth, B.
\newblock{A Schwarz Lemma for harmonic and hyperbolic-harmonic functions in higher dimensions}, {\it Manuscr. Math.} \textbf{77} (1992), 283-291.

\bibitem{Chang3}
Chang, D.-C., Markina, I. and  Wang, W., \newblock {On the Cauchy-Szeg\"o kernel for quaternion Siegel upper half-space}, \emph{Complex Anal. Oper. Theory} \textbf{7} (2013), 1623-1654.

\bibitem{CHANG}
Chang, D.-C., Duong, X. T., Li, J., Wang, W. and Wu, Q. Y., \newblock{An explicit formula of Cauchy-Szeg\"{o} kernel for quaternionic Siegel upper half space and applications}, {\it  Indiana Univ. Math. J.} \textbf{70} (2021), 2451-2477.

\bibitem{Chen J}
Chen, J., Chen, S., Huang, M. and Zheng, H., \newblock{Isoperimetric type inequalities for mappings induced by weighted Laplace differential operators}, {\it  J. Geom. Anal.} \textbf{33} (2023).


\bibitem{Er}
Erd{\'e}lyi, A., Magnus, W., Oberhettinger, F. and Tricomi, F. G.,
{\it Higher transcendental functions. {Vol}. {I}}, {\rm Bateman Manuscript Project} (1953).


\bibitem{Essen}
Ess\'{e}n, M., Wulan, H., S. and Xiao J.,
\newblock{Several function-theoretic characterizations of M\"{o}bius invariant \(\mathcal Q_K\)
 spaces},
{\it J. Funct. Anal. } \textbf{230} (2006), 78-115.

\bibitem{Evans}
Evans, L., C.
{\it Partial differential equations. 2nd ed.}, {\rm Providence, RI: AMS} (2010).

\bibitem{hr}
Huang, T., Wang, R., W.,
\newblock{The Cauchy-Szeg\"{o} kernel for the Hardy space of 0-regular functions on the quaternionic Siegel upper half space},
{\it  Anal. Math. Phys.} \textbf{12} (2022), article number 141.
\bibitem{humphrey}
Humphreys., J. E.,
{\it Introduction to Lie algebras and representation theory}, {\rm Springer-Verlag, Berlin, New York} (1972).


\bibitem{Liu1}
Liu, C. W. and Peng., L. H.,
\newblock{Boundary regularity in the Dirichlet problem for the invariant Laplacians $\triangle_{\gamma}$
 on the unit real ball},
{\it Proc. Am. Math. Soc. } \textbf{132} (2004), 3259-3268.

\bibitem{Liu2}
Liu, C. W. and Shi., J. H.,
\newblock{Invariant mean-value property and $\mathcal{M}$-harmonicity
in the unit ball $\mathbb{R}^n$},
{\it Acta. Math. Sinica.} \textbf{19} (2003), 187-200.

\bibitem{Liu3}
Liu, C. W. and Xu., H.,
\newblock{Lipschitz continuity of the solutions to the Dirichlet problems for the invariant Laplacians},
{\it J. Math. Anal. Appl. } \textbf{538} (2024), Article: 128447.

\bibitem{Folland}
Folland, G. B.,
\newblock{Spherical harmonic expansion of the Poisson-Szeg\"{o} kernel for the ball}, {\it  Pro. Amer. Math. Soc.} \textbf{47} (1975), 401-408.

\bibitem{Fulton}
Fulton, W. and Harris, J.
{\it Representation Theory: a first course}, {\rm Springer, New York} (2004).
\bibitem{Gilbrag}
Gilbrag, D. and Trudinger, N., S.
{\it Elliptic partial differential equations of second order}, , {\rm Springer, New York} (1998).

\bibitem{Lu}
Flynn, J., Lu, G. and Yang Q. H.,
\newblock{Sharp Hardy-Sobolev-Maz'ya, Adams and Hardy-Adams inequalities on quaternionic hyperbolic spaces and the Cayley hyperbolic plane}, {\it Adv. Math.} \textbf{319} (2017), 567-598.

\bibitem{Olo}
Olofsson, A.,
\newblock{Lipschitz continuity for weighted harmonic functions in the unit disc},  {\it Complex Var. Elliptic Equ.} \textbf{65} (2020), 1630-1660.

\bibitem{Ou2}
Ouyang, C., Yang,  W., and Zhao, R.,
\newblock{M\"{o}bius invariant $Q_p$ spaces associated with the Green's
function on the unit ball of $\mathbb{C}^n$},  {\it  Pacific J. Math.} \textbf{182} (1998), 69-99.

\bibitem{Rudin}
Rudin, W., {\it Function theory in the unit ball of $\mathbb{C}^n$}, {\rm Springer-Verlag, Berlin, New York} (1980).

\bibitem{Stein}
Stein, E., M.,
{\it Singular integrals and differentiablity properties of functions}, {\rm Princeton university press}, Princeton (1970).

\bibitem{Stoll}
Stoll, M.,
{\it Harmonic and subharmonic function theory on the hyperbolic ball}, {\rm  Cambridge University Press, Cambridge} (2016).

\bibitem{wang3}
Wan, D. and Wang, W.,
\newblock{On quaternionic Monge$-$Amp\`{e}re operator, closed positive currents and Lelong-Jensen type formula on the quaternionic space},
{\it Bull. Sci. Math.} \textbf{141} (2017), 267-311.

\bibitem{wang5}
Wang, W.,
\newblock{The $k$-Cauchy-Fueter complexes, Penrose transformation and Hartogs' phenomenon for quaternionic $k$-regular functions},
{\it J. Geom. Phys.} \textbf{60} (2010), 513-530.

\bibitem{wang2}
Wang, W.,
\newblock{On the linear algebra in the quaternionic pluripotential theory},
{\it  Linear Algebra Appl.} \textbf{562} (2019), 223-241.

\bibitem{wang4}
Wang, W.,
\newblock{Quaternionic projective invariance of the $k$-Cauchy-Fueter complex and applications I},
{\it Differ. Geom. Appl.} \textbf{101} (2025), Article 102299.


\bibitem{wulan}
Wulan, H., S.,and Zhu K.,
{\it M\"{o}bius invariant \(\mathcal Q_K\) spaces},
{\rm Springer-Verlag,  New York},  (2017).

\bibitem{Xia}
Xia, W., and Wang H. Y.,
\newblock{The Mobius addition and generalized Laplace-Beltrami operator in octonionic space},
{\it Adv. Appl. Clifford Algebr.} \textbf{34} (2024), Paper No. 27.

\bibitem{Zhang}
Zhang, G. K., and Liu H. P.,
\newblock{Realization of quaternionic discrete series on the unit ball in $\mathbb{H}^d$},
{\it J. Funct. Anal.} \textbf{262} (2012), 2979-3005.

\bibitem{Zhou}
Zhou L. F.,
\newblock{A Bohr phenomenon for $\alpha$-harmonic functions},
{\it J. Math. Anal. Appl.} \textbf{505} (2022), Ariticle 125617.
\end{thebibliography}
\end{document}